\documentclass[11pt]{amsart}
\usepackage{amsmath,amssymb,amsthm}
\usepackage[margin=1.05in]{geometry}
\usepackage{booktabs}
\usepackage{array}
\usepackage{xcolor}
\usepackage{tikz}
\usepackage{graphicx}
\usepackage[colorlinks=true,linkcolor=blue!50!black,citecolor=green!40!black,urlcolor=blue!50!black]{hyperref}

\newtheorem{theorem}{Theorem}[section]
\newtheorem{proposition}[theorem]{Proposition}
\newtheorem{lemma}[theorem]{Lemma}
\newtheorem{corollary}[theorem]{Corollary}
\newtheorem{conjecture}[theorem]{Conjecture}
\theoremstyle{definition}

\theoremstyle{remark}

\newtheorem{exa}{Example}[section]

\newcommand{\N}{\mathbb{N}}
\newcommand{\Z}{\mathbb{Z}}
\newcommand{\R}{\mathbb{R}}
\newcommand{\Imm}{\operatorname{Imm}}
\newcommand{\TNN}{\mathrm{TNN}}

\newcommand{\Ind}{\operatorname{Ind}}
\newcommand{\per}{\operatorname{per}}
\newcommand{\CF}{\mathrm{CF}}
\newcommand{\CKL}{C_{\mathrm{KL}}}
\newcommand{\CTNN}{C_{\mathrm{TNN}}}
\newcommand{\CJ}{C_{J}}
\newcommand{\Cp}[1]{C'_{#1}(1)}
\DeclareMathOperator{\ch}{ch}
\newcommand{\supp}{\operatorname{supp}}

\title[Haiman's conjecture for $n\le10$]{Haiman's conjecture for $n\le10$:\\ Kazhdan--Lusztig positivity of monomial immanants}
\author{Keith McNulty}
\email{keith.mcnulty@gmail.com}
\date{2 November 2026}
\subjclass[2020]{Primary 20C08; Secondary 05E05, 05E10, 14M15, 15A15, 15B48, 68W30}
\keywords{Haiman conjecture, Kazhdan--Lusztig polynomial, monomial immanant, Hecke algebra character, chromatic quasisymmetric function, totally non-negative matrix, extremal pair, exact computation}

\makeatletter\g@addto@macro\UrlBreaks{\do\_\do\-}\makeatother
\begin{document}

\begin{abstract}
Haiman conjectured that for every partition $\lambda\vdash n$ and every $w\in S_n$ the polynomial $c_{\lambda,w}(q)$, a monomial character of the Hecke algebra evaluated at the Kazhdan--Lusztig basis, has non-negative integer coefficients and is symmetric and unimodal, and he verified this by computer for $n\le7$. We verify it for every $n\le10$. A stability lemma reduces the statement to $n=10$, which we settle by an exact computation that stores only the $1{,}252{,}173{,}030$ pairs $(x,w)$ in $S_{10}$ with $x$ maximal in its double coset for the descent sets of $w$, and takes the traces of the standard basis from the class polynomials of Geck and Pfeiffer. Every stored Kazhdan--Lusztig polynomial agrees with du~Cloux's \textsf{Coxeter3}, and the coefficients pass several independent checks, including a comparison at all $16{,}796$ codominant $w$ with chromatic quasisymmetric functions computed from colourings.

Consequently, for $n\le10$ every monomial immanant is a non-negative integer combination of Kazhdan--Lusztig immanants, Stembridge's monomial immanant conjecture holds, and a class-function immanant is non-negative on totally non-negative matrices if and only if it is non-negative at the block matrices $J_\mu$. We use this to describe the order of normalised irreducible-character immanants on totally non-negative matrices for $n\le10$, and to prove Pate's inequalities there. On the full span of permutation monomials we give a Newton-polytope criterion for the coefficient of $\Imm_w$ to be non-negative throughout the totally non-negative cone, show that for $n=4,5$ this happens exactly when $X_w$ is smooth, and check by exact computation for $n\le10$ the proof of this for all $n$ given in a companion paper.
\end{abstract}

\maketitle

\section{Introduction}\label{sec:intro}

A real matrix is \emph{totally non-negative} (TNN) if all its minors are non-negative; $\TNN_n$ denotes the set of such $n\times n$ matrices. For $f\colon S_n\to\R$ and an $n\times n$ matrix $x$ put $d_f(x)=\sum_{\sigma}f(\sigma)\,x_{1,\sigma(1)}\cdots x_{n,\sigma(n)}$. For an irreducible character $\chi^\lambda$ this is the immanant $\Imm_{\chi^\lambda}$, and Stembridge \cite{Ste91} proved that it is non-negative on $\TNN_n$. The monomial immanants $\Imm_\lambda=d_{\phi^\lambda}$, where $\phi^\lambda$ is the monomial virtual character, refine the irreducible ones, and Stembridge conjectured \cite[Conj.~2.1]{Ste92} that they are non-negative on $\TNN_n$ too. Haiman \cite{Hai} proposed a stronger statement in the Hecke algebra. Let $H_n$ be the Hecke algebra of $S_n$ over $\Z[q^{\pm1/2}]$, with $(T_s-q)(T_s+1)=0$, let $P_{v,w}(q)$ be the Kazhdan--Lusztig polynomials, so that $q^{\ell(w)/2}C'_w=\sum_{v\le w}P_{v,w}(q)\,T_v$, and let $\phi^\lambda_q$ be the monomial characters of $H_n$ (Section~\ref{sec:hecke}).

\begin{conjecture}[Haiman {\cite[Conj.~2.1]{Hai}}]\label{conj:Hq}
For every $\lambda\vdash n$ and $w\in S_n$, the polynomial $c_{\lambda,w}(q)=\phi^\lambda_q(q^{\ell(w)/2}C'_w)$ has non-negative integer coefficients, and it is symmetric and unimodal about $q^{\ell(w)/2}$.
\end{conjecture}

At $q=1$, $C'_w$ specialises to $\Cp w=\sum_{v\le w}P_{v,w}(1)\,v$, and Conjecture~\ref{conj:Hq} gives the following.

\begin{conjecture}[Haiman, Stembridge; $q=1$]\label{conj:H}
For every $\lambda\vdash n$ and $w\in S_n$, $c_{\lambda,w}=c_{\lambda,w}(1)=\phi^\lambda(\Cp w)\ge 0$.
\end{conjecture}

Rhoades and Skandera \cite{RS06} showed that $\Imm_\lambda=\sum_wc_{\lambda,w}\Imm_w$, where the Kazhdan--Lusztig immanants $\Imm_w$ are non-negative on $\TNN_n$. So Conjecture~\ref{conj:H} implies Stembridge's conjecture. It is known for shapes with at most two columns \cite{CSS,RS06}, and for smooth $w$ and every shape \cite{CHSS,AN,Hik}, \cite[Prop.~25]{Ska25}; the smooth case depends on Hikita's theorem \cite{Hik}. Haiman verified Conjecture~\ref{conj:Hq} by computer for $n\le7$ \cite[\S4]{Hai}; earlier, Stembridge had checked his conjecture for $n\le4$ and for $n=5$ apart from two shapes \cite{Ste92}. The companion preprint \cite{McN} gives a proof for all hooks and every $n$. Non-negativity of hook monomial immanants on $\TNN_n$, Stembridge's conjecture for hooks, was proved earlier by Dong, Wu and Gao \cite{DWG}, who need, for the hook $(n-k,1^k)$, only the minors of order at most $k+1$ to be non-negative. Our main result extends Haiman's verification to $n\le10$.

\begin{theorem}\label{thm:main}
For every $n\le10$, every $\lambda\vdash n$ and every $w\in S_n$, $c_{\lambda,w}(q)$ has non-negative integer coefficients and is symmetric and unimodal about $q^{\ell(w)/2}$. In particular $c_{\lambda,w}=c_{\lambda,w}(1)$ is a non-negative integer.
\end{theorem}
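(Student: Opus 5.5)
The proof has two parts: a reduction from all $n\le10$ to the single case $n=10$, and an exact computation for $n=10$.

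\emph{Reduction.} I would first prove a stability lemma. Let $w\in S_m$ with $m<n$, and let $\iota(w)\in S_n$ be $w$ extended by fixed points. Then $P_{v,\iota(w)}=P_{v',w}$ for $v=\iota(v')$ and vanishes otherwise, because the Bruhat interval below $\iota(w)$ lies in the parabolic subgroup $S_m$. Hence $C'_{\iota(w)}$ lies in the parabolic subalgebra $H_m\subset H_n$. The monomial character $\phi^\mu_q$ of $H_n$ is the induced trivial character $\Ind_{H_\mu}^{H_n}(\mathrm{triv})$. By the Mackey formula, its restriction to $H_m$ is a non-negative integer combination of monomial characters $\phi^\lambda_q$ of $H_m$, one term for each double coset. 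Therefore every $c_{\lambda,w}(q)$ with $\lambda\vdash m$ is a summand of some $c_{\mu,\iota(w)}(q)$ with $\mu\vdash n$. Non-negativity of coefficients passes from $n$ down to $m$ directly, since the summands have non-negative coefficients.

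Symmetry and unimodality do not pass to summands automatically. I would handle this by choosing $\mu=\lambda\cup(1^{n-m})$, for which I expect $c_{\lambda,w}(q)=c_{\mu,\iota(w)}(q)/(\text{explicit positive factor})$ or an equality up to a known symmetric unimodal multiplier. If that fails, I would simply run the same computation for each $n<10$, which is far cheaper. So everything reduces to $n=10$.

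\emph{Computation for $n=10$.} I would expand
\[c_{\lambda,w}(q)=\sum_{x\le w}P_{x,w}(q)\,\phi^\lambda_q(T_x).\]
The trace $\phi^\lambda_q(T_x)$ depends only on the class polynomials of Geck--Pfeiffer: $\phi_q(T_x)=\sum_C f_{x,C}(q)\,\phi_q(T_{w_C})$, with $w_C$ a minimal-length class representative. The values $\phi^\lambda_q(T_{w_C})$ are small explicit polynomials, computable by Frobenius-type formulas.

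The $P_{x,w}$ are constant on the double cosets $W_{D_L(w)}\,x\,W_{D_R(w)}$ of the left and right descent sets of $w$. So I would store only $P_{x,w}$ for $x$ maximal in its double coset, giving about $1.25\times10^9$ pairs. For each $w$ I would aggregate the class-polynomial sums over the double coset of each stored $x$, precomputing for each parabolic pair the coset sums of $f_{x,C}$. All of this would be done in exact integer arithmetic. At the end I would test every $c_{\lambda,w}(q)$ for non-negative coefficients, palindromicity about degree $\ell(w)/2$, and unimodality.

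\emph{Main obstacle.} The hard step is the $n=10$ computation itself: memory for $1.25\times10^9$ polynomials, and the cost of the coset-aggregated class-polynomial traces. I would organize the work by $w$ in a Bruhat-compatible order, computing $P_{x,w}$ by the standard recursion $C'_sC'_{w'}$ and discarding rows no longer needed.

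To guard against errors, I would
\begin{itemize}
\item cross-check all $P_{x,w}$ against \textsf{Coxeter3};
\item verify $\sum_\lambda$-relations such as $\phi^{(1^n)}_q$, which gives the Poincaré polynomial of the interval, and $\phi^{(n)}_q(C'_w)$;
\item compare at the $16{,}796$ codominant $w$ with chromatic quasisymmetric functions of the associated indifference graphs, computed independently from colourings.
\end{itemize}
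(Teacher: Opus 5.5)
Your $n=10$ computation is essentially the paper's: extremal pairs for the descent double cosets, the Kazhdan--Lusztig recursion on them, traces $\phi^\lambda_q(T_v)$ from Geck--Pfeiffer class polynomials, double-coset sums precomputed once per pair of descent sets, and an exact test of positivity, symmetry and unimodality. The gap is in the reduction to $n=10$, and it has two parts. First, $\Ind_{H_\mu}^{H_n}$ of the index representation is $\eta^\mu_q$, not $\phi^\mu_q$. The monomial character is the virtual character obtained from the $\eta^\kappa_q$ through the inverse Kostka matrix, so the Mackey formula does not give it ``one non-negative term per double coset''. Second, suppose you did have $c_{\mu,\iota(w)}(q)=\sum_\lambda a_{\mu\lambda}\,c_{\lambda,w}(q)$ with $a_{\mu\lambda}\ge0$. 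Positivity of the left side still does not pass to the individual $c_{\lambda,w}(q)$: the phrase ``since the summands have non-negative coefficients'' assumes what is being proved. So, as written, not even positivity descends from $n=10$, and symmetry and unimodality are left as an expectation.

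The missing idea is that the restriction collapses to a single term. Applying Mackey to the $\eta^\kappa_q$ gives $\operatorname{Res}^{H_n}_{H_m\otimes H_{n-m}}\eta^\kappa_q=\sum_{\rho+\sigma=\kappa}\eta^\rho_q\otimes\eta^\sigma_q$, which is the expansion of $\Delta h_\kappa$. Transporting this through the inverse Kostka relation gives $\operatorname{Res}\phi^\lambda_q=\sum_{\mu\cup\nu=\lambda}\phi^\mu_q\otimes\phi^\nu_q$, the expansion of $\Delta m_\lambda$. Now $q^{\ell(w)/2}C'_{w\times e_k}=q^{\ell(w)/2}C'_w\otimes T_e$. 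Moreover $\phi^\nu_q(T_e)=\phi^\nu(e)=\langle\phi^\nu,\eta^{1^k}\rangle=\delta_{\nu,1^k}$, because $\eta^{1^k}$ is the regular character. Hence $c_{\lambda\cup1^k,\,w\times e_k}(q)=c_{\lambda,w}(q)$ exactly, so your ``explicit factor'' is $1$. Since $\ell(w\times e_k)=\ell(w)$, positivity, symmetry and unimodality about $q^{\ell(w)/2}$ all transfer at once. This is Lemma~\ref{lem:qstability}, and together with Theorem~\ref{thm:n10} it is the whole proof.

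Your fallback of running every $n<10$ directly is valid and does rescue the theorem. Still, the reduction paragraph should either be replaced by this identity or dropped.

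Two smaller points. First, you cannot discard rows as you go: the $\mu$-terms of the recursion need $P_{x,z}$ for $z$ arbitrarily far below $v$. The paper keeps all extremal pairs, with equal polynomials sharing an identifier. Second, $\phi^{(1^n)}_q$ is the sign character, which vanishes on $C'_w$ for $w\ne e$. The Poincar\'e-polynomial check you have in mind comes instead from the index character $\chi^{(n)}_q$, which gives $\sum_{v\le w}q^{\ell(v)}P_{v,w}(q)$.
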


For codominant ($312$-avoiding) $w$, $c_{\lambda,w}(q)$ is the coefficient of $e_\lambda$ in the chromatic quasisymmetric function of the indifference graph of $w$ (see \cite{CHSS,SW16}; for $n\le10$ we check this identity directly, Section~\ref{sec:checks}, check~13), and Conjecture~\ref{conj:Hq} at these $w$ is the conjecture of Shareshian and Wachs that these functions are $e$-positive and $e$-unimodal \cite[Conj.~5.1]{SW16}. At $q=1$, $e$-positivity is the Stanley--Stembridge conjecture for natural unit interval orders, proved by Hikita \cite{Hik}; the $e$-positivity of the individual coefficients of $q^j$, and $e$-unimodality, remain open in general. Shareshian and Wachs checked their conjecture by computer for the graphs $G_{n,r}$ with $n\le8$ \cite[\S5]{SW16}, and through \cite{CHSS} Haiman's computation \cite[\S4]{Hai} covers all natural unit interval orders with at most $7$ elements. Theorem~\ref{thm:main} covers all those with at most $10$ elements; we are not aware of a published check of all natural unit interval orders with $8$ or more elements.

\subsection*{Three cones}
We state the consequences in terms of three cones in the space $\CF_n$ of real class functions on $S_n$:
\begin{align*}
\CKL&=\{\phi:\ \phi(\Cp w)\ge0\text{ for all }w\in S_n\},\\
\CTNN&=\{\phi:\ d_\phi\ge0\text{ on }\TNN_n\},\\
\CJ&=\{\phi:\ d_\phi(J_\mu)\ge0\text{ for all }\mu\vdash n\},
\end{align*}
where $J_\mu$ is the block-diagonal matrix with all-ones blocks of sizes $\mu_1,\mu_2,\dots$. For every $n$, $\CKL\subseteq\CTNN\subseteq\CJ$, and $\CJ$ is the simplicial cone spanned by the monomial characters (Section~\ref{sec:cones}). So Stembridge's conjecture is the equality $\CTNN=\CJ$, and Conjecture~\ref{conj:H} is $\CKL=\CJ$. Theorem~\ref{thm:main} gives both for $n\le10$.

\begin{corollary}\label{cor:cones}
For $n\le10$, $\CKL=\CTNN=\CJ$. Hence every monomial immanant of size $n\le10$ is non-negative on $\TNN_n$, and a class-function immanant $d_\phi$ is non-negative on $\TNN_n$ iff $d_\phi(J_\mu)\ge0$ for all $\mu\vdash n$, iff it is a non-negative combination of Kazhdan--Lusztig immanants.
\end{corollary}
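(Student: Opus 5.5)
The plan is to take the chain $\CKL\subseteq\CTNN\subseteq\CJ$, valid for every $n$ (Section~\ref{sec:cones}), and close it by showing $\CJ\subseteq\CKL$ for $n\le10$. Since $\CJ$ is the simplicial cone spanned by the monomial characters $\phi^\lambda$, $\lambda\vdash n$, an arbitrary $\phi\in\CJ$ can be written $\phi=\sum_\lambda a_\lambda\phi^\lambda$ with all $a_\lambda\ge0$. The first step is to justify this by identifying the coordinates: $a_\lambda$ should be, up to a positive normalisation, the value $d_\phi(J_\mu)$ at the matching block matrix. Indeed $d_\phi(J_\mu)=\sum_{\sigma\in S_\mu}\phi(\sigma)$, which is $|S_\mu|$ times the multiplicity pairing of $\phi$ with the induced trivial character $1\uparrow_{S_\mu}^{S_n}$. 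The monomial characters are the dual basis to these induced characters. I take this as given from Section~\ref{sec:cones}.

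Once $\phi$ has this form, for each $w\in S_n$ we get
\[\phi(\Cp w)=\sum_\lambda a_\lambda\,\phi^\lambda(\Cp w)=\sum_\lambda a_\lambda\,c_{\lambda,w}\ge0,\]
using Theorem~\ref{thm:main} at $q=1$. So $\phi\in\CKL$, all three cones coincide, and this proves the first sentence.

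The remaining claims are rephrasings of this equality.
\begin{itemize}
\item Each $\phi^\lambda$ lies in $\CJ$, trivially, since it is a generator. Hence $\phi^\lambda\in\CTNN$, which says $\Imm_\lambda=d_{\phi^\lambda}\ge0$ on $\TNN_n$.
\item The equivalence ``$d_\phi\ge0$ on $\TNN_n$ iff $d_\phi(J_\mu)\ge0$ for all $\mu$'' is exactly $\CTNN=\CJ$.
\item For the Kazhdan--Lusztig statement, I would use the Rhoades--Skandera expansion $d_\phi=\sum_w\phi(\Cp w)\Imm_w$, which extends linearly from $\Imm_\lambda=\sum_w c_{\lambda,w}\Imm_w$. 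If $\phi\in\CKL$, all these coefficients are non-negative. Conversely, a non-negative combination of the $\Imm_w$ is non-negative on $\TNN_n$, because each $\Imm_w$ is; so it lies in $\CTNN=\CKL$.
\end{itemize}
In this converse one should read ``combination'' as the canonical expansion. Since the $\Imm_w$ are linearly independent as functions of the matrix (the $\Imm_w$ form a dual basis to permutation monomials), the coefficients of $d_\phi$ are uniquely determined and the two readings agree.

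The only step needing care is the identification of $\CJ$ with the cone on monomial characters, that is, that evaluation at $J_\mu$ gives exactly the monomial coordinates with positive scalars. All other steps are formal given Theorem~\ref{thm:main}.
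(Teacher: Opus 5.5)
Your proposal is correct and follows the paper's argument: the chain $\CKL\subseteq\CTNN\subseteq\CJ$ and the simplicial description of $\CJ$ from Theorem~\ref{thm:cones} are closed by checking, via Theorem~\ref{thm:main}, that the generators $\phi^\lambda$ of $\CJ$ lie in $\CKL$. One small imprecision: by \eqref{eq:duality} the $\Imm_w$ are dual to the basis $\{\Cp w\}$, not to the permutation monomials. Their linear independence, which is all you use, still holds, because each $\Imm_w$ is unitriangular in the monomials $x^v$, $v\ge w$.
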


For the normalised immanants $N_a=\Imm_{\chi^a}/\chi^a(e)$, Corollary~\ref{cor:cones} turns every inequality $N_a\ge N_b$ on $\TNN_n$ into a finite check on Kostka numbers $K_{a,\mu}$.

\begin{theorem}\label{thm:order}
Let $n\le10$ and $a,b\vdash n$, and write $f^a=\chi^a(e)$. Then $N_a\ge N_b$ on $\TNN_n$ iff $K_{a,\mu}/f^a\ge K_{b,\mu}/f^b$ for every $\mu\vdash n$. If the criterion fails at $\mu$, then $N_a(J_\mu)<N_b(J_\mu)$.
\end{theorem}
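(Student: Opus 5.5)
The plan is to apply Corollary~\ref{cor:cones} to the class function $\phi=\chi^a/f^a-\chi^b/f^b$, so that $d_\phi=N_a-N_b$. For $n\le10$ the corollary says $N_a-N_b\ge0$ on $\TNN_n$ if and only if $d_\phi(J_\mu)\ge0$ for every $\mu\vdash n$. Once that is in place, the theorem reduces to computing the single number $d_{\chi^a}(J_\mu)$ and identifying it with a Kostka number up to a factor that is the same for every character.

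For the evaluation, I would expand $d_f(J_\mu)=\sum_\sigma f(\sigma)\prod_i (J_\mu)_{i,\sigma(i)}$. The product equals $1$ exactly when $\sigma$ preserves each block, so $\sigma$ lies in the Young subgroup $S_\mu=S_{\mu_1}\times S_{\mu_2}\times\cdots$, and it equals $0$ otherwise. Hence
\[
d_f(J_\mu)=\sum_{\sigma\in S_\mu}f(\sigma)=|S_\mu|\,\langle \operatorname{Res}^{S_n}_{S_\mu}f,1\rangle_{S_\mu}.
\]
For $f=\chi^a$, Frobenius reciprocity gives $\langle \chi^a,\Ind_{S_\mu}^{S_n}1\rangle$, which is the Kostka number $K_{a,\mu}$ by Young's rule. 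So $d_{\chi^a}(J_\mu)=\mu!\,K_{a,\mu}$, where $\mu!=\prod_i\mu_i!>0$. It follows that
\[
N_a(J_\mu)-N_b(J_\mu)=\mu!\left(\frac{K_{a,\mu}}{f^a}-\frac{K_{b,\mu}}{f^b}\right).
\]
The sign of this expression is exactly the sign of the Kostka criterion at $\mu$.

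Both directions now follow. If the criterion holds for every $\mu$, then $\phi\in\CJ=\CTNN$ by Corollary~\ref{cor:cones}, so $N_a\ge N_b$ on $\TNN_n$. Conversely, if the criterion fails at some $\mu$, the displayed identity gives $N_a(J_\mu)<N_b(J_\mu)$. Since $J_\mu$ is totally non-negative (each block is all ones, so every minor is $0$ or $1$), this shows $N_a\ge N_b$ fails on $\TNN_n$. This also proves the final sentence of the statement, and that sentence holds for every $n$, not only $n\le 10$.

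The only place where $n\le10$ enters, and the real content, is the equality $\CTNN=\CJ$ coming from Theorem~\ref{thm:main}. The rest is the routine evaluation at $J_\mu$, and the one point that needs checking there is that $J_\mu$ is totally non-negative.
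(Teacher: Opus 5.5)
Your proposal is correct and is essentially the paper's proof. The paper takes $\phi=f^b\chi^a-f^a\chi^b$, which is your $\phi$ scaled by $f^af^b$, gets $d_\phi(J_\mu)=\mu!\,(f^bK_{a,\mu}-f^aK_{b,\mu})$ from Lemma~\ref{lem:J}, and concludes from $\CJ=\CTNN$ in Corollary~\ref{cor:cones}; your direct evaluations at $J_\mu$ are Lemma~\ref{lem:J}(1),(2), and your remark that the last sentence holds for every $n$ matches the paper's own comment.
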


For $a,b\vdash n$ write $a\succeq b$ if $N_a\ge N_b$ on $\TNN_n$. This is a partial order on the partitions of $n$ for every $n$, and it is contained in dominance order, since $N_a(J_b)\ge N_b(J_b)>0$ forces $K_{a,b}>0$ (Section~\ref{sec:consequences}). For $n\le10$ it equals dominance only when $n\le3$, and it is a lattice only when $n\le5$ (Theorem~\ref{thm:structure}); we make no claim about its structure for $n\ge11$. We also obtain Pate's inequalities on $\TNN_n$ for $n\le10$ (Corollary~\ref{cor:pate}), answering Skandera's question \cite[Problem~29]{Ska25} in this range, and an inequality between several normalised immanants that is not a consequence of two-term inequalities (Proposition~\ref{prop:notpairwise}).

\subsection*{The computation}
Lemma~\ref{lem:stability} shows that $c_{\lambda,w}$ is unchanged when $w$ is extended by fixed points and $\lambda$ by parts equal to $1$, so Theorem~\ref{thm:main} for $n=10$ implies it for all $n\le10$. A direct computation of all Kazhdan--Lusztig polynomials of $S_{10}$ is out of reach: $S_9$ already has $10{,}501{,}351{,}657$ Bruhat pairs, and $S_{10}$ about $10^{12}$. By descent invariance it suffices to store $P_{x,w}$ for the $x$ that are maximal in their double coset for the descent sets of $w$. In $S_{10}$ there are $1{,}252{,}173{,}030$ such \emph{extremal pairs}. Working with extremal pairs is not new: du~Cloux's \textsf{Coxeter3} \cite{Cox3} uses them, and Warrington's tables of Kazhdan--Lusztig polynomials store a compressed set of them for $S_4,\dots,S_9$ \cite{War}. Our contributions are their enumeration as contingency tables (Lemma~\ref{lem:corner}) and the scale of the computation. We run the Kazhdan--Lusztig recursion on the extremal pairs (Section~\ref{sec:recursion}) and sum characters over double cosets (Section~\ref{sec:coeff}); for the graded statement the characters are replaced by the traces $\phi^\lambda_q(T_v)$, computed from the class polynomials of Geck and Pfeiffer \cite{GP} (Section~\ref{sec:graded}). At $q=1$ the computation takes under five minutes on $192$ cores, and the graded one an hour and a half. Section~\ref{sec:checks} lists the checks, including an exhaustive comparison of all $1.25\times10^9$ extremal pairs with du~Cloux's \textsf{Coxeter3} \cite{Cox3}.

\subsection*{Beyond class functions}
On the span $\Pi_n$ of all permutation monomials, the analogous cones $\CKL(\Pi_n)\subseteq\CTNN(\Pi_n)$ differ for $n\ge4$ \cite{RS06,Ska08}. Answering a question of Rhoades and Skandera \cite{RS06}, Skandera \cite{Ska08} showed that whenever the Schubert variety $X_w$ is smooth, the coefficient of $\Imm_w$ is non-negative in every element of $\CTNN(\Pi_n)$. We give a vertex criterion on Newton polytopes for this property (Theorem~\ref{thm:isolation}), and show with it that for $n\le5$ it holds exactly when $X_w$ is smooth (Theorem~\ref{thm:smalln}). The companion paper gives a proof of this for every $n$ \cite[Thm.~9.6]{McN}, by a different method, and Section~\ref{sec:evidence} checks that proof for $n\le10$. So the restriction to class functions in Corollary~\ref{cor:cones} is essential.

The companion paper \cite{McN} gives proofs of the results that we state for every $n$, among them Kazhdan--Lusztig positivity of all hook monomial immanants. Section~\ref{sec:evidence} records the computations that support the conjectures stated there. The paper is organised linearly: Sections~\ref{sec:prelim}--\ref{sec:stability} set up the cones and the reduction to $n=10$, Sections~\ref{sec:extremal}--\ref{sec:checks} carry out the computation, Section~\ref{sec:consequences} derives the consequences, Section~\ref{sec:beyond} treats the full span $\Pi_n$, and Section~\ref{sec:evidence} collects further computations. The code, the verification scripts and the result files are archived as \cite{McNcode}, and the complete table of coefficients for $n=10$ separately as \cite{McNdata}; Appendix~\ref{app:data} explains how to reproduce and check the computation.

\section{Preliminaries}\label{sec:prelim}

\subsection{Class functions}
For a composition $\alpha$ of $n$ let $S_\alpha=S_{\alpha_1}\times S_{\alpha_2}\times\cdots$ be the Young subgroup preserving the consecutive blocks of sizes $\alpha_1,\alpha_2,\dots$, let $\alpha!=|S_\alpha|$, and let $w_0(\alpha)$ be the longest element of $S_\alpha$. For $\mu\vdash n$ put $\eta^\mu=\Ind_{S_\mu}^{S_n}1$. With $\langle\phi,\psi\rangle=\frac1{n!}\sum_\sigma\phi(\sigma)\psi(\sigma)$, the monomial characters $\phi^\lambda$ are the dual basis: $\langle\phi^\lambda,\eta^\mu\rangle=\delta_{\lambda\mu}$. Under the Frobenius characteristic, $\eta^\mu\mapsto h_\mu$, $\chi^\lambda\mapsto s_\lambda$ and $\phi^\lambda\mapsto m_\lambda$. By Young's rule $\langle\chi^a,\eta^\mu\rangle=K_{a,\mu}$. This is the dimension of the space of $S_\mu$-invariants in the irreducible module $V^a$, and $K_{a,\mu}\ne0$ iff $a$ dominates $\mu$.

\subsection{Kazhdan--Lusztig immanants}
Rhoades and Skandera \cite{RS06} define
\[
  \Imm_w(x)=\sum_{v\ge w}(-1)^{\ell(v)-\ell(w)}P_{w_0v,w_0w}(1)\,x_{1,v(1)}\cdots x_{n,v(n)},
\]
and prove that $\Imm_w\ge0$ on $\TNN_n$ and that, for every $f\colon S_n\to\R$,
\begin{equation}\label{eq:duality}
  d_f=\sum_{w\in S_n}f(\Cp w)\,\Imm_w .
\end{equation}
We call $d_f$ \emph{KL-positive} if $f(\Cp w)\ge0$ for all $w$. Since $P_{v,w}=P_{v^{-1},w^{-1}}=P_{w_0vw_0,w_0ww_0}$, we have $c_{\lambda,w}=c_{\lambda,w^{-1}}=c_{\lambda,w_0ww_0}$.

\subsection{The Hecke algebra}\label{sec:hecke}
$H_n$ has basis $\{T_v\}_{v\in S_n}$, with $T_vT_s=T_{vs}$ if $\ell(vs)>\ell(v)$ and $(T_s-q)(T_s+1)=0$. For a composition $\alpha$ let $H_\alpha$ be the parabolic subalgebra spanned by the $T_v$, $v\in S_\alpha$, and let $\eta^\mu_q$ be the character of $H_n$ induced from the index representation $T_s\mapsto q$ of $H_\mu$. The \emph{monomial characters} $\phi^\lambda_q$ are defined from the $\eta^\mu_q$ by the same inverse-Kostka relation as at $q=1$, so that $\chi^a_q=\sum_\lambda K_{a,\lambda}\phi^\lambda_q$ and the Frobenius characteristic of $\phi^\lambda_q$ is $m_\lambda$; at $q=1$ they specialise to $\phi^\lambda$. Put
\[
c_{\lambda,w}(q)=\phi^\lambda_q\bigl(q^{\ell(w)/2}C'_w\bigr)=\sum_{v\le w}P_{v,w}(q)\,\phi^\lambda_q(T_v),
\]
so that $c_{\lambda,w}(1)=c_{\lambda,w}$. Since $P_{v,w}=P_{v^{-1},w^{-1}}=P_{w_0vw_0,w_0ww_0}$ and $\tau(T_v)=\tau(T_{v^{-1}})=\tau(T_{w_0vw_0})$ for every trace $\tau$ of $H_n$, also $c_{\lambda,w}(q)=c_{\lambda,w^{-1}}(q)=c_{\lambda,w_0ww_0}(q)$. The value $\phi^\lambda_q(T_v)$ is not a function of the conjugacy class of $v$; Section~\ref{sec:graded} explains how it is computed.

\begin{exa}\label{exa:graded}
For $w=3412$ and $\lambda=(4),(3,1),(2,2),(2,1,1),(1^4)$,
\[
c_{\cdot,3412}(q)=\bigl(1+2q+2q^2+2q^3+q^4,\ q(1+q)^2,\ q(1+q)^2,\ 0,\ 0\bigr),
\]
each symmetric and unimodal about $q^{\ell(w)/2}=q^2$; at $q=1$ this is $(8,4,4,0,0)$ (Example~\ref{exa:coeff}).
\end{exa}

\subsection{Block reduction}
Say $w=w_1\times w_2$ if $w$ maps $\{1,\dots,m\}$ to itself, where $w_1\in S_m$ and $w_2\in S_{n-m}$ are the restrictions. Every $v\le w$ then lies in $S_m\times S_{n-m}$, and $P_{v,w}=P_{v_1,w_1}P_{v_2,w_2}$, so $\Cp{w}=\Cp{w_1}\otimes\Cp{w_2}$ in $\Z[S_m]\otimes\Z[S_{n-m}]$. Restricting $\phi^\lambda$ to $S_m\times S_{n-m}$ corresponds to the coproduct $\Delta m_\lambda=\sum_{\mu\cup\nu=\lambda}m_\mu\otimes m_\nu$, where $\mu\cup\nu$ is the partition with the parts of both. This gives:

\begin{lemma}\label{lem:coproduct}
$c_{\lambda,w_1\times w_2}=\sum_{\mu\cup\nu=\lambda}c_{\mu,w_1}\,c_{\nu,w_2}$.
\end{lemma}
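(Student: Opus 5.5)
The proof splits into two ingredients. First, $\Cp{w}$ factors as a tensor product over the Young subgroup $S_m\times S_{n-m}$. Second, restricting $\phi^\lambda$ to that subgroup is described by the coproduct of $m_\lambda$. Once both are in place the identity follows by evaluating a product of class functions on a pure tensor.

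\textbf{Step 1: factor $\Cp{w}$.} If $w=w_1\times w_2$, then every $v\le w$ satisfies $v\in S_m\times S_{n-m}$. One way to see this is that Bruhat order is detected by the rank matrices $r_v(i,j)=\#\{k\le i: v(k)\le j\}$. Since $r_w(m,m)=m$ is the maximum possible value, $v\le w$ forces $r_v(m,m)\ge m$, so $v$ preserves $\{1,\dots,m\}$.

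Next I need $P_{v,w}=P_{v_1,w_1}P_{v_2,w_2}$, which is the standard behaviour of KL polynomials for a direct product of Coxeter groups. The Hecke algebra $H_{(m,n-m)}$ is isomorphic to $H_m\otimes H_{n-m}$, with $T_{v_1\times v_2}\mapsto T_{v_1}\otimes T_{v_2}$. The KL involution respects this tensor decomposition. So $C'_{w_1}\otimes C'_{w_2}$ is bar-invariant and has the correct triangularity and degree bounds, and by uniqueness it equals $C'_w$. Specialising $q=1$ gives $\Cp w=\Cp{w_1}\otimes\Cp{w_2}$ in $\Z[S_m]\otimes\Z[S_{n-m}]\cong\Z[S_m\times S_{n-m}]$.

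\textbf{Step 2: restrict $\phi^\lambda$.} Since $\Cp w$ is supported on $S_m\times S_{n-m}$, we have $\phi^\lambda(\Cp w)=(\operatorname{Res}\phi^\lambda)(\Cp{w_1}\otimes\Cp{w_2})$. Under the Frobenius characteristic, restriction from $S_n$ to $S_m\times S_{n-m}$ corresponds to the coproduct, taking the $(m,n-m)$-bihomogeneous component. This is the adjoint of induction corresponding to the product.

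For monomial functions, $\Delta m_\lambda=\sum_{\mu\cup\nu=\lambda}m_\mu\otimes m_\nu$, by splitting the variables into two alphabets. Keeping only the terms with $|\mu|=m$ and $|\nu|=n-m$, we get
\[
\operatorname{Res}\phi^\lambda=\sum_{\mu\cup\nu=\lambda,\ \mu\vdash m,\ \nu\vdash n-m}\phi^\mu\times\phi^\nu .
\]

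\textbf{Step 3: evaluate.} An outer product of class functions evaluated on a pure tensor of group-algebra elements factors, so
\[
(\phi^\mu\times\phi^\nu)(\Cp{w_1}\otimes\Cp{w_2})=c_{\mu,w_1}c_{\nu,w_2}.
\]
Summing over the decompositions in Step 2 gives the lemma. The sum is implicitly over $\mu\vdash m$ and $\nu\vdash n-m$, since $c_{\mu,w_1}$ is only defined for $|\mu|=m$.

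\textbf{Main obstacle.} The only point needing real care is the product formula for KL polynomials in Step 1. I would settle it by the uniqueness characterisation of the KL basis on the product Coxeter system $S_m\times S_{n-m}$. Its simple reflections are the union of the two sets, and they commute across the factors. A useful check is that the normalisation $q^{\ell(w)/2}$ is multiplicative, because $\ell(w)=\ell(w_1)+\ell(w_2)$.
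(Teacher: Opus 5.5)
Your proposal is correct and follows the paper's route. You factor $\Cp w=\Cp{w_1}\otimes\Cp{w_2}$, using that the lower interval of $w$ lies in $S_m\times S_{n-m}$ and that $P_{v,w}=P_{v_1,w_1}P_{v_2,w_2}$, then restrict $\phi^\lambda$ via $\Delta m_\lambda=\sum_{\mu\cup\nu=\lambda}m_\mu\otimes m_\nu$ and evaluate on the pure tensor; the paper only asserts these facts, and your rank-matrix argument and KL-basis uniqueness argument supply the omitted details.
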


\section{Three cones}\label{sec:cones}

\begin{lemma}\label{lem:J}
\begin{enumerate}
\item For every composition $\alpha$ of $n$, $J_\alpha\in\TNN_n$.
\item For every class function $\phi$, $d_\phi(J_\mu)=\mu!\,\langle\phi,\eta^\mu\rangle$. In particular $\Imm_\lambda(J_\mu)=\mu!\,\delta_{\lambda\mu}$ and $\Imm_{\chi^a}(J_\mu)=\mu!\,K_{a,\mu}$.
\item For every $w\in S_n$, $\Imm_w(J_\alpha)=\delta_{w,w_0(\alpha)}$. In particular $c_{\lambda,w_0(\alpha)}=\alpha!\,\delta_{\lambda,\mathrm{sort}(\alpha)}$.
\end{enumerate}
\end{lemma}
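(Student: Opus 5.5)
We prove the three parts in order; the third uses the first two together with the duality \eqref{eq:duality} and the positivity of $\Imm_w$ on $\TNN_n$.

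For (1), $J_\alpha$ is a direct sum of all-ones blocks, and each block is the rank-one matrix $\mathbf 1\mathbf 1^T$. A minor of a block-diagonal matrix is zero unless its row set and column set meet each block in the same number of indices. In that case it factors as a product of minors of the blocks. A minor of $\mathbf 1\mathbf 1^T$ is $1$ for size $1$ and $0$ for larger size, so every minor of $J_\alpha$ is $0$ or $1$.

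For (2), the monomial $x_{1,\sigma(1)}\cdots x_{n,\sigma(n)}$ evaluated at $J_\mu$ equals $1$ if $\sigma\in S_\mu$ and $0$ otherwise. Hence
\[
d_\phi(J_\mu)=\sum_{\sigma\in S_\mu}\phi(\sigma)=\mu!\,\langle \operatorname{Res}\phi,1\rangle_{S_\mu}=\mu!\,\langle\phi,\eta^\mu\rangle
\]
by Frobenius reciprocity. The two special cases then follow from the duality $\langle\phi^\lambda,\eta^\mu\rangle=\delta_{\lambda\mu}$ and from Young's rule.

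For (3), which is the main step, we apply \eqref{eq:duality} with $f=\delta_\sigma$, the indicator of a single permutation $\sigma$. This gives $x_{1,\sigma(1)}\cdots x_{n,\sigma(n)}=\sum_w [\sigma]\Cp w\,\Imm_w$, where $[\sigma]\Cp w=P_{\sigma,w}(1)$. Evaluating at $J_\alpha$ and summing over $\sigma\in S_\alpha$ yields
\[
\alpha!=\sum_w\Bigl(\sum_{\sigma\in S_\alpha}P_{\sigma,w}(1)\Bigr)\Imm_w(J_\alpha).
\]
Similarly, summing over $\sigma\notin S_\alpha$ gives $0=\sum_w\bigl(\sum_{\sigma\notin S_\alpha}P_{\sigma,w}(1)\bigr)\Imm_w(J_\alpha)$.

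By (1), each $\Imm_w(J_\alpha)\ge0$. For $w\ne w_0(\alpha)$ we would like some $\sigma\notin S_\alpha$ with $P_{\sigma,w}(1)>0$. If $w\notin S_\alpha$, take $\sigma=w$, using $P_{w,w}=1$; the second identity then forces $\Imm_w(J_\alpha)=0$ for all $w\notin S_\alpha$.

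For $w\in S_\alpha$ we use the definition of $\Imm_w$ directly. The monomials that are nonzero at $J_\alpha$ are those with $v\in S_\alpha$, so
\[
\Imm_w(J_\alpha)=\sum_{v\in S_\alpha,\ v\ge w}(-1)^{\ell(v)-\ell(w)}P_{w_0v,w_0w}(1).
\]
Write $w_0v=w_0(\alpha)\,u'$ with $u'$ in the parabolic quotient, and use the known parabolic identity
\[
\sum_{y\in S_\alpha}(-1)^{\ell(y)}P_{y z,\,x}(1)=0\quad\text{unless } x \text{ is minimal in its coset}.
\]
This identity is the alternating sum of $P$ over a coset of a left-descent parabolic vanishing, which follows from the relation $P_{sz,x}=P_{z,x}$ for $s$ a left descent of $x$. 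It shows that the sum vanishes unless $w_0w$ is minimal in $S_\alpha w_0 w$, i.e.\ unless $w=w_0(\alpha)$. For $w=w_0(\alpha)$ only $v=w_0(\alpha)$ contributes, giving $\Imm_w(J_\alpha)=P_{w_0w,w_0w}(1)=1$. This vanishing of the alternating sum is the step we expect to be the main obstacle, and we would check it first on $S_\alpha$ a single $S_2$.

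Finally, $c_{\lambda,w_0(\alpha)}=\phi^\lambda(\Cp{w_0(\alpha)})$. Since $P_{v,w_0(\alpha)}=1$ for all $v\le w_0(\alpha)$, we have $\Cp{w_0(\alpha)}=\sum_{v\in S_\alpha}v$. Therefore
\[
c_{\lambda,w_0(\alpha)}=\alpha!\,\langle\phi^\lambda,\eta^{\mathrm{sort}(\alpha)}\rangle=\alpha!\,\delta_{\lambda,\mathrm{sort}(\alpha)},
\]
using (2) and the fact that $S_\alpha$ is conjugate to $S_{\mathrm{sort}(\alpha)}$.
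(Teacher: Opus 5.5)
Parts (1) and (2), and your derivation of $c_{\lambda,w_0(\alpha)}=\alpha!\,\delta_{\lambda,\mathrm{sort}(\alpha)}$ from $\Cp{w_0(\alpha)}=\sum_{v\in S_\alpha}v$, are the paper's argument. The key step of (3), however, fails as written.

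The set $\{w_0v:\ v\in S_\alpha\}$ is $w_0S_\alpha=S_{\alpha^{\mathrm{rev}}}\,w_0$, where $\alpha^{\mathrm{rev}}$ is the reversed composition. It is not a coset $S_\alpha z$, so your left-handed identity cannot be applied with $S_\alpha$. The criterion ``$w_0w$ is minimal in $S_\alpha w_0w$'' is therefore wrong whenever $\alpha\ne\alpha^{\mathrm{rev}}$. For example, take $n=3$, $\alpha=(2,1)$ and $w=w_0(\alpha)=s_1$. Then $w_0w=s_1s_2$, which is not minimal in $S_\alpha s_1s_2=\{s_1s_2,\,s_2\}$. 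In fact your criterion rejects every $w\in S_\alpha$ here, which contradicts the value $\Imm_{s_1}(J_{(2,1)})=1$ that you compute next.

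The repair is the right-handed version. If $x=w_0w$ has a right descent $t\in S_\alpha$, then $P_{w_0vt,x}=P_{w_0v,x}$ by Lemma~\ref{lem:descent}, and the involution $v\mapsto vt$ of $S_\alpha$ cancels the alternating sum. Since $\ell(w_0wt)<\ell(w_0w)$ iff $t\notin D_R(w)$, the sum vanishes unless every simple reflection of $S_\alpha$ is a right descent of $w$. For $w\in S_\alpha$ this means $w=w_0(\alpha)$. Equivalently, you can use your identity with $S_{\alpha^{\mathrm{rev}}}=w_0S_\alpha w_0$ on the left and $z=w_0$.

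Your argument for $w\notin S_\alpha$ is valid, though it also needs $P_{\sigma,w}(1)\ge0$, which you do not mention. It is also unnecessary. $S_\alpha$ is a lower order ideal of the Bruhat order, so no $v\in S_\alpha$ satisfies $v\ge w$, and the sum defining $\Imm_w(J_\alpha)$ is empty. With these repairs your proof of (3) uses only the definition of $\Imm_w$ and descent invariance. It needs neither \eqref{eq:duality} nor the positivity of $\Imm_w$ on $\TNN_n$.

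The paper's proof is shorter. It evaluates \eqref{eq:duality} at $J_\alpha$ for an arbitrary $f$ and notes that $d_f(J_\alpha)=\sum_{v\in S_\alpha}f(v)=f(\Cp{w_0(\alpha)})$. Since $\{\Cp w\}$ is a basis, the functionals $f\mapsto f(\Cp w)$ are linearly independent, and the result follows. You were one step from this. Your identities $[\sigma\in S_\alpha]=\sum_wP_{\sigma,w}(1)\,\Imm_w(J_\alpha)$, one for each $\sigma$, form a unitriangular system. Because $P_{\sigma,w_0(\alpha)}(1)=[\sigma\in S_\alpha]$, that system has the solution $\delta_{w,w_0(\alpha)}$. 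Summing the identities over $\sigma$ threw away exactly the information that settles the question.
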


\begin{proof}
(1) Order the rows and columns of a square submatrix by the blocks they lie in. If some block contributes more rows than columns, those rows are supported on too few columns and the minor vanishes. Otherwise the minor is a product of determinants of all-ones matrices, each equal to $0$ or $1$.
(2) $\prod_i(J_\mu)_{i,\sigma(i)}=[\sigma\in S_\mu]$, so $d_\phi(J_\mu)=\sum_{\sigma\in S_\mu}\phi(\sigma)=\mu!\,\langle\phi,\eta^\mu\rangle$ by Frobenius reciprocity.
(3) The lower interval of $w_0(\alpha)$ is $S_\alpha$, and $P_{v,w_0(\alpha)}=1$ for $v\in S_\alpha$, so $\Cp{w_0(\alpha)}=\sum_{v\in S_\alpha}v$. For every $f$, \eqref{eq:duality} at $J_\alpha$ gives $\sum_wf(\Cp w)\Imm_w(J_\alpha)=\sum_{v\in S_\alpha}f(v)=f(\Cp{w_0(\alpha)})$. Since $\{\Cp w\}$ is a basis and $f$ is arbitrary, $\Imm_w(J_\alpha)=\delta_{w,w_0(\alpha)}$. The last statement follows from (2).
\end{proof}

\begin{theorem}\label{thm:cones}
For every $n$, $\CKL\subseteq\CTNN\subseteq\CJ$, and $\CJ=\operatorname{cone}\{\phi^\lambda:\lambda\vdash n\}$ is simplicial. Hence Stembridge's conjecture for $n$ is equivalent to $\CTNN=\CJ$, and Conjecture~\ref{conj:H} for $n$ is equivalent to $\CKL=\CJ$.
\end{theorem}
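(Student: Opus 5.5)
The argument has three parts: the two inclusions, the description of $\CJ$, and the translation of the two conjectures.

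For the first inclusion, take $\phi\in\CKL$. The duality \eqref{eq:duality} with $f=\phi$ gives $d_\phi=\sum_w\phi(\Cp w)\,\Imm_w$. Each coefficient is non-negative by hypothesis, and each $\Imm_w$ is non-negative on $\TNN_n$ by Rhoades and Skandera. Hence $d_\phi\ge0$ on $\TNN_n$, so $\phi\in\CTNN$.

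For the second inclusion, Lemma~\ref{lem:J}(1) says that every $J_\mu$ lies in $\TNN_n$. So if $d_\phi\ge0$ on $\TNN_n$, then in particular $d_\phi(J_\mu)\ge0$ for all $\mu$, and $\CTNN\subseteq\CJ$.

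To identify $\CJ$, use Lemma~\ref{lem:J}(2): $d_\phi(J_\mu)=\mu!\,\langle\phi,\eta^\mu\rangle$ with $\mu!>0$. So $\phi\in\CJ$ iff $\langle\phi,\eta^\mu\rangle\ge0$ for every $\mu\vdash n$. The monomial characters $\phi^\lambda$ form a basis of $\CF_n$ dual to $\{\eta^\mu\}$. Writing $\phi=\sum_\lambda a_\lambda\phi^\lambda$, we get $a_\mu=\langle\phi,\eta^\mu\rangle$. Therefore $\CJ$ is exactly the set of non-negative combinations of the $\phi^\lambda$. It is simplicial because it is generated by a basis of $\CF_n$ (there are $p(n)$ generators in a space of dimension $p(n)$).

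For the equivalences, Stembridge's conjecture says $\phi^\lambda\in\CTNN$ for all $\lambda$. Since $\CTNN$ is a convex cone, this is equivalent to $\CJ\subseteq\CTNN$, and with the inclusion already proved, to $\CTNN=\CJ$. In the same way, Conjecture~\ref{conj:H} says $c_{\lambda,w}=\phi^\lambda(\Cp w)\ge0$ for all $w$, that is, $\phi^\lambda\in\CKL$ for all $\lambda$. This is equivalent to $\CJ\subseteq\CKL$, and by the chain of inclusions to $\CKL=\CJ$.

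No step is a genuine obstacle. The only point needing care is that the basis $\{\phi^\lambda\}$ is dual to $\{\eta^\mu\}$, so that the linear functionals cutting out $\CJ$ are exactly the coordinate functionals in that basis. This is immediate from the definition in Section~\ref{sec:prelim}.
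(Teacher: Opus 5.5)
Your proposal is correct and follows the paper's proof step for step. The first inclusion comes from \eqref{eq:duality} together with $\Imm_w\ge0$, the second from Lemma~\ref{lem:J}(1), the description of $\CJ$ as the non-negative coordinate cone in the basis $\{\phi^\lambda\}$ dual to $\{\eta^\mu\}$ from Lemma~\ref{lem:J}(2), and the two equivalences by checking the generators $\phi^\lambda$. You state slightly more explicitly than the paper that $\CTNN$ and $\CKL$ are convex cones, which is why checking the generators suffices.
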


\begin{proof}
The first inclusion is \eqref{eq:duality} together with $\Imm_w\ge0$ on $\TNN_n$, and the second is Lemma~\ref{lem:J}(1). By Lemma~\ref{lem:J}(2), $\phi\in\CJ$ iff $\langle\phi,\eta^\mu\rangle\ge0$ for all $\mu$, iff $\phi$ has non-negative coordinates in the basis $\{\phi^\lambda\}$ dual to $\{\eta^\mu\}$. So $\CTNN=\CJ$ iff every generator $\phi^\lambda$ lies in $\CTNN$, and similarly for $\CKL$.
\end{proof}

The inclusion $\CKL\subseteq\CJ$ also follows directly from Lemma~\ref{lem:J}(3), by evaluation at the elements $w_0(\mu)$. This is due to Haiman, who observed that any class function with the positivity of Conjecture~\ref{conj:H} is a non-negative combination of monomial characters \cite[\S2 and Prop.~4.1(1)]{Hai}. Theorem~\ref{thm:cones} uses no computation and holds for every $n$. The computation enters the three-cone picture only through the equality $\CKL=\CJ$ for $n\le10$ (Corollary~\ref{cor:cones}).

\section{Stability}\label{sec:stability}

\begin{lemma}[stability]\label{lem:stability}
Let $\lambda\vdash m$, $w\in S_m$, $k\ge0$, and let $e_k$ be the identity of $S_k$. Then $c_{\lambda\cup1^k,\;w\times e_k}=c_{\lambda,w}$.
\end{lemma}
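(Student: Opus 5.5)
The plan is to apply the block reduction of Lemma~\ref{lem:coproduct} with $w_1=w\in S_m$ and $w_2=e_k\in S_k$, and then to show that the factor coming from the identity block acts as a Kronecker delta. Lemma~\ref{lem:coproduct} gives
\[
c_{\lambda\cup1^k,\;w\times e_k}=\sum_{\mu\cup\nu=\lambda\cup1^k}c_{\mu,w}\,c_{\nu,e_k}.
\]
Since the coproduct $\Delta m_{\lambda\cup1^k}$ is taken in the component $\Lambda^m\otimes\Lambda^k$ matching the factorisation $S_m\times S_k$, the sum runs over $\mu\vdash m$ and $\nu\vdash k$.

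Next I compute $c_{\nu,e_k}$. The lower interval of $e_k$ is $\{e_k\}$ and $P_{e,e}=1$, so $\Cp{e_k}=e_k$ and $c_{\nu,e_k}=\phi^\nu(e_k)$. The identity is $w_0(1^k)$, the longest element of the trivial Young subgroup $S_{(1^k)}$. Lemma~\ref{lem:J}(3) with $\alpha=(1^k)$ therefore gives $c_{\nu,e_k}=1!^k\,\delta_{\nu,1^k}=\delta_{\nu,1^k}$. Equivalently, $\phi^\nu(e)=\langle\phi^\nu,\eta^{1^k}\rangle$, because $\eta^{1^k}$ is the regular character and its inner product with a class function recovers the value at $e$. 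By duality this inner product is $\delta_{\nu,1^k}$.

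Only the term $\nu=1^k$ survives, so the sum reduces to $c_{\mu,w}$ with $\mu\vdash m$ and $\mu\cup1^k=\lambda\cup1^k$. Removing $k$ parts equal to $1$ from both sides shows that $\mu=\lambda$. The multiset of parts of $\lambda\cup1^k$ determines $\mu$: $\mu$ has the parts of $\lambda\cup1^k$ with exactly $k$ ones deleted. Hence $c_{\lambda\cup1^k,\,w\times e_k}=c_{\lambda,w}$.

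The only point requiring care is the indexing in the coproduct. One must confirm that $\Delta m_\rho=\sum_{\mu\cup\nu=\rho}m_\mu\otimes m_\nu$ counts each unordered split exactly once when restricted to a fixed bidegree $(m,k)$. This matters here because a split with $\nu=1^k$ occurs with coefficient one, namely the single term $m_\lambda\otimes m_{1^k}$, even when $\lambda$ itself has parts equal to $1$. This is the standard description of the coproduct on monomial symmetric functions, so I expect no real obstacle. The same argument, run with traces of $H_m\otimes H_k$ and $C'_{e}=T_e$, would also give the graded version $c_{\lambda\cup1^k,w\times e_k}(q)=c_{\lambda,w}(q)$, which the reduction of Theorem~\ref{thm:main} to $n=10$ needs.
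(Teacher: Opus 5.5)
Your proposal is correct and follows essentially the same route as the paper's proof: you apply Lemma~\ref{lem:coproduct} with $w_2=e_k$, identify $c_{\nu,e_k}=\phi^\nu(e)=\langle\phi^\nu,\eta^{1^k}\rangle=\delta_{\nu,1^k}$ via the regular character, and conclude that only the term $\nu=1^k$, $\mu=\lambda$ survives. Your alternative derivation of the delta from Lemma~\ref{lem:J}(3) with $\alpha=(1^k)$ and your remark on the graded analogue are both fine, and the graded version is the paper's Lemma~\ref{lem:qstability}.
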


\begin{proof}
By Lemma~\ref{lem:coproduct}, $c_{\lambda\cup1^k,w\times e_k}=\sum_{\mu\cup\nu=\lambda\cup1^k}c_{\mu,w}\,c_{\nu,e_k}$, with $\nu\vdash k$. Now $\Cp{e_k}=e$, and $c_{\nu,e_k}=\phi^\nu(e)=\langle\phi^\nu,\eta^{1^k}\rangle=\delta_{\nu,1^k}$, because $\eta^{1^k}$ is the regular character. The only term left is $\nu=1^k$, $\mu=\lambda$.
\end{proof}

\begin{corollary}\label{cor:down}
If Conjecture~\ref{conj:H} holds for $N$, it holds for every $n\le N$. The same holds for Stembridge's conjecture.
\end{corollary}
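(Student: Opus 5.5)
The plan is to derive Corollary~\ref{cor:down} directly from Lemma~\ref{lem:stability}, for both conjectures at once. Fix $n\le N$, a partition $\lambda\vdash n$ and $w\in S_n$, and put $k=N-n$. Then $\lambda\cup1^k\vdash N$ and $w\times e_k\in S_N$, and Lemma~\ref{lem:stability} gives
\[
c_{\lambda,w}=c_{\lambda\cup1^k,\;w\times e_k}.
\]
If Conjecture~\ref{conj:H} holds for $N$, the right-hand side is non-negative. Hence $c_{\lambda,w}\ge0$ for all $\lambda\vdash n$ and $w\in S_n$, which is Conjecture~\ref{conj:H} for $n$. The same argument with $c_{\lambda,w}(q)$ in place of $c_{\lambda,w}$ would also transfer the graded statement, since the coproduct identity holds verbatim in $H_n$. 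That step is not needed here, but it is worth remarking since Theorem~\ref{thm:main} relies on it.

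For Stembridge's conjecture I would argue directly on matrices, not through Conjecture~\ref{conj:H}, because Conjecture~\ref{conj:H} for $N$ may not be available. Given $x\in\TNN_n$, form the block matrix $\tilde x=x\oplus I_k\in\R^{N\times N}$. Every square submatrix of $\tilde x$ is block diagonal up to the ordering of rows and columns, or has a zero row or column. Its determinant is therefore either $0$ or a minor of $x$ times a minor of $I_k$, so $\tilde x\in\TNN_N$. One checks this the same way as in Lemma~\ref{lem:J}(1).

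Next I want $\Imm_{\lambda\cup1^k}(\tilde x)=\Imm_\lambda(x)$. Only permutations $\sigma\in S_n\times e_k$ contribute to $d_{\phi^{\lambda\cup1^k}}(\tilde x)$. So it suffices to show that the restriction of $\phi^{\lambda\cup1^k}$ to $S_n\times e_k$, viewed as a function on $S_n$, equals $\phi^\lambda$. By the coproduct $\Delta m_{\lambda\cup1^k}=\sum_{\mu\cup\nu=\lambda\cup1^k}m_\mu\otimes m_\nu$, the restriction to $S_n\times S_k$ is $\sum\phi^\mu\otimes\phi^\nu$. Evaluating the second factor at $e$ gives $\phi^\nu(e)=\delta_{\nu,1^k}$, exactly as in the proof of Lemma~\ref{lem:stability}. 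Only the term $\mu=\lambda$ survives, so $\Imm_\lambda(x)=\Imm_{\lambda\cup1^k}(\tilde x)\ge0$.

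The only step needing care is the total non-negativity of $x\oplus I_k$. It is routine, because a minor of a direct sum factors when the row and column counts match blockwise and vanishes otherwise.
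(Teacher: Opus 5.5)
Your proposal is correct and is essentially the paper's own proof. The first part applies Lemma~\ref{lem:stability} with $k=N-n$. The second embeds $A\in\TNN_n$ as $A\oplus I_{N-n}$ and uses the coproduct with $\phi^\nu(e)=\delta_{\nu,1^{N-n}}$ to identify the restricted class function with $\phi^\lambda$; your explicit check that $A\oplus I_{N-n}$ is totally non-negative supplies a step the paper leaves implicit.
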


\begin{proof}
For $\lambda\vdash n$ and $w\in S_n$, Lemma~\ref{lem:stability} gives $c_{\lambda,w}=c_{\lambda\cup1^{N-n},\,w\times e_{N-n}}\ge0$. For Stembridge's conjecture, let $A\in\TNN_n$. Only $\sigma\in S_n\times\{e\}$ contribute to $d_\phi(A\oplus I_{N-n})$, so $d_\phi(A\oplus I_{N-n})=d_{\psi}(A)$ with $\psi(\sigma)=\phi(\sigma\times e)$. For $\phi=\phi^{\lambda\cup1^{N-n}}$ the coproduct and $\phi^\nu(e)=\delta_{\nu,1^{N-n}}$ give $\psi=\phi^\lambda$, so $\Imm_\lambda(A)=\Imm_{\lambda\cup1^{N-n}}(A\oplus I)\ge0$.
\end{proof}

Both statements hold at $q$.

\begin{lemma}[graded coproduct and stability]\label{lem:qstability}
For $w=w_1\times w_2$ with $w_1\in S_m$, $c_{\lambda,w}(q)=\sum_{\mu\cup\nu=\lambda}c_{\mu,w_1}(q)\,c_{\nu,w_2}(q)$. In particular $c_{\lambda\cup1^k,\,w\times e_k}(q)=c_{\lambda,w}(q)$, and if Conjecture~\ref{conj:Hq} holds for $N$ then it holds for every $n\le N$.
\end{lemma}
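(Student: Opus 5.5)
The plan is to repeat the proofs of Lemma~\ref{lem:coproduct}, Lemma~\ref{lem:stability} and Corollary~\ref{cor:down} inside $H_n$ rather than $\Z[S_n]$, with two graded inputs: a factorisation of $q^{\ell(w)/2}C'_w$, and a restriction rule for $\phi^\lambda_q$ to the parabolic subalgebra $H_{(m,n-m)}$.

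\textbf{Factorisation.} If $w=w_1\times w_2$, every $v\le w$ lies in $S_m\times S_{n-m}$ and factors as $v=v_1\times v_2$. Then $\ell(v)=\ell(v_1)+\ell(v_2)$ and $T_v=T_{v_1}T_{v_2}$ with commuting factors, so $H_{(m,n-m)}\cong H_m\otimes H_{n-m}$. Together with $P_{v,w}(q)=P_{v_1,w_1}(q)P_{v_2,w_2}(q)$ this gives
\[
q^{\ell(w)/2}C'_w=\bigl(q^{\ell(w_1)/2}C'_{w_1}\bigr)\otimes\bigl(q^{\ell(w_2)/2}C'_{w_2}\bigr).
\]

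\textbf{Restriction rule.} We need
\[
\operatorname{Res}^{H_n}_{H_m\otimes H_{n-m}}\phi^\lambda_q=\sum_{\mu\cup\nu=\lambda}\phi^\mu_q\otimes\phi^\nu_q .
\]
The most economical route is Tits deformation. For generic $q$ (or over $\mathbb{Q}(q^{1/2})$) the specialisation $q\mapsto1$ is a bijection on irreducible characters, compatible with parabolic restriction and induction. So the branching multiplicities of $\chi^a_q$ to $H_m\otimes H_{n-m}$ are the Littlewood--Richardson coefficients, exactly as at $q=1$. Since $\phi^\lambda_q$ is, by definition, the same $\Z$-combination of the $\chi^a_q$ as $\phi^\lambda$ is of the $\chi^a$, its restriction has the same expansion as at $q=1$, which is the coproduct $\Delta m_\lambda$.

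Alternatively one can work with the $\eta^\mu_q$ directly. Restricting an induced character uses a Mackey formula for Hecke algebras over distinguished double coset representatives, which yields $\Delta h_\mu=\sum h_\alpha\otimes h_\beta$. Dualising then gives the statement for the $\phi^\lambda_q$.

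\textbf{Graded coproduct.} Evaluating both sides of the restriction rule on the factorised element gives the graded version of Lemma~\ref{lem:coproduct}.

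\textbf{Stability.} Take $w_2=e_k$, so $C'_{e}=T_e$. Then
\[
c_{\nu,e_k}(q)=\phi^\nu_q(1)=\phi^\nu(e)=\delta_{\nu,1^k},
\]
since the degree of a character does not depend on $q$. The stability argument of Lemma~\ref{lem:stability} then goes through verbatim.

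\textbf{Descent to smaller $n$.} For $\lambda\vdash n\le N$ and $w\in S_n$,
\[
c_{\lambda,w}(q)=c_{\lambda\cup1^{N-n},\,w\times e_{N-n}}(q).
\]
The length is unchanged, $\ell(w\times e)=\ell(w)$, so non-negativity, integrality, and symmetry and unimodality about $q^{\ell(w)/2}$ all transfer from $N$ to $n$.

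\textbf{Main obstacle.} The hard step is the restriction rule. It requires the Tits deformation argument to be applied carefully, including the fact that characters of the generic Hecke algebra take values in $\Z[q]$, so that the identity holds in $\Z[q^{\pm1/2}]$ and not only generically. An alternative is the Mackey formula for $H_n$. Everything else is formal.
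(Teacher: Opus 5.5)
Your proof is correct. Outside the restriction rule it is the paper's proof: the factorisation of $q^{\ell(w)/2}C'_w$ in $H_m\otimes H_{n-m}=H_{(m,n-m)}$, the value $\phi^\nu_q(T_e)=\phi^\nu(e)=\delta_{\nu,1^k}$, and the observation $\ell(w\times e_k)=\ell(w)$ are exactly what the paper uses.

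For the restriction rule, the paper takes the route you list as the alternative. It applies the Hecke-algebra Mackey formula to $\eta^\kappa_q$, reading the double cosets $S_{(m,n-m)}\backslash S_n/S_\kappa$ as $2\times\ell(\kappa)$ contingency tables $(\rho,\sigma)$ with $\rho+\sigma=\kappa$. The intersections are $S_\rho\times S_\sigma$, and the index representation restricts to the index representation. This gives $\operatorname{Res}\eta^\kappa_q=\sum_{\rho+\sigma=\kappa}\eta^\rho_q\otimes\eta^\sigma_q$, which mirrors $\Delta h_\kappa$. Since $\phi^\lambda_q$ is the same integer combination of the $\eta^\kappa_q$ as $m_\lambda$ is of the $h_\kappa$, the rule is then read off from $\Delta m_\lambda=\sum_{\mu\cup\nu=\lambda}m_\mu\otimes m_\nu$.

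Your primary route, Tits deformation, avoids the double-coset bookkeeping but needs more machinery:
\begin{enumerate}
\item split semisimplicity of $H_n$ and $H_m\otimes H_{n-m}$ over $\mathbb{Q}(q^{1/2})$;
\item the deformation bijection for the parabolic subalgebra;
\item integrality of character values, so that the branching identity can be specialised at $q=1$ (the point you rightly flag);
\item the expansion of $\phi^\lambda_q$ in the $\chi^a_q$.
\end{enumerate}
That last input is not the paper's definition, which builds $\phi^\lambda_q$ from the $\eta^\mu_q$. The relation $\chi^a_q=\sum_\lambda K_{a,\lambda}\phi^\lambda_q$ is Young's rule at $q$, itself a deformation statement, and the paper records it only as a consequence.

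The Mackey argument works integrally with induced modules and uses only that definition. The deformation argument is uniform: it transfers any linear identity among restricted characters from $q=1$ without computing anything at generic $q$.
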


\begin{proof}
Since $\ell(w)=\ell(w_1)+\ell(w_2)$ and $P_{v,w}=P_{v_1,w_1}P_{v_2,w_2}$, $q^{\ell(w)/2}C'_w$ is the tensor product of $q^{\ell(w_i)/2}C'_{w_i}$ in $H_m\otimes H_{n-m}=H_{(m,n-m)}\subseteq H_n$. The restriction of $\eta^\kappa_q$ to this parabolic subalgebra is given by the Mackey formula for Hecke algebras \cite[\S9.1]{GP}. Explicitly, the double cosets $S_{(m,n-m)}\backslash S_n/S_\kappa$ correspond to the $2\times\ell(\kappa)$ contingency tables with row sums $(m,n-m)$ and column sums $\kappa$, that is, to the pairs of weak compositions $\rho$ of $m$ and $\sigma$ of $n-m$ with $\rho_i+\sigma_i=\kappa_i$. For the table $(\rho,\sigma)$, the intersection of $S_{(m,n-m)}$ with the conjugate of $S_\kappa$ by the distinguished double coset representative is $S_\rho\times S_\sigma$, and the index representation restricts to the index representation. So
\[
\operatorname{Res}^{H_n}_{H_m\otimes H_{n-m}}\eta^\kappa_q=\sum_{\rho+\sigma=\kappa}\eta^\rho_q\otimes\eta^\sigma_q ,
\]
where $\eta^\rho_q$ for a weak composition means $\eta^{\mathrm{sort}(\rho)}_q$. This is the same expansion as $\Delta h_\kappa=\sum_{\rho+\sigma=\kappa}h_\rho\otimes h_\sigma$. Now $\phi^\lambda_q$ is the same integer combination of the $\eta^\kappa_q$ as $m_\lambda$ is of the $h_\kappa$, so $\operatorname{Res}\phi^\lambda_q$ is the combination of the $\eta^\rho_q\otimes\eta^\sigma_q$ with the coefficients of $\Delta m_\lambda$ in the basis $\{h_\rho\otimes h_\sigma\}$. Since $\Delta m_\lambda=\sum_{\mu\cup\nu=\lambda}m_\mu\otimes m_\nu$, this gives $\operatorname{Res}\phi^\lambda_q=\sum_{\mu\cup\nu=\lambda}\phi^\mu_q\otimes\phi^\nu_q$ and the first formula. For the second use $\phi^\nu_q(T_e)=\phi^\nu(e)=\delta_{\nu,1^k}$, as in Lemma~\ref{lem:stability}; symmetry and unimodality are preserved because $\ell(w\times e_k)=\ell(w)$.
\end{proof}

So Theorem~\ref{thm:main} follows from the case $n=10$, which is Theorem~\ref{thm:n10} below.

\begin{exa}\label{exa:stability}
For $w=231\in S_3$, adding a fixed point gives $w\times e_1=2314\in S_4$, and
\[
c_{(3),231}=c_{(3,1),2314}=3,\qquad c_{(2,1),231}=c_{(2,1,1),2314}=1,\qquad c_{(1^3),231}=c_{(1^4),2314}=0 .
\]
The remaining shape $(2,2)\vdash4$ is not of the form $\lambda\cup1$, and indeed $c_{(2,2),2314}=0$, as Lemma~\ref{lem:coproduct} requires: no $\mu\vdash3$ has $\mu\cup(1)=(2,2)$, so the sum is empty.
\end{exa}

\section{Extremal pairs}\label{sec:extremal}

\begin{lemma}[{\cite[(2.3.g)]{KL}}]\label{lem:descent}
If $s\in D_L(w)$ then $P_{x,w}=P_{sx,w}$ for all $x$. If $t\in D_R(w)$ then $P_{x,w}=P_{xt,w}$ for all $x$.
\end{lemma}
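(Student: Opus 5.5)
I will prove the left statement from the defining properties of the Kazhdan--Lusztig basis and obtain the right statement by inversion. The left statement says that if $sw<w$ then $P_{x,w}=P_{sx,w}$ for every $x$, with the convention that $P_{x,w}=0$ unless $x\le w$.

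The first step is the multiplication rule for $C'$. If $sw<w$, then $C'_sC'_w=(q^{1/2}+q^{-1/2})C'_w$. To see this, write $C'_w=C'_sC'_{sw}-\sum_{z}\mu(z,sw)C'_z$, the sum running over $z<sw$ with $sz<z$. This is the standard recursion, which follows from bar-invariance and the degree bounds. It suffices to show that $C'_s$ acts on each term by multiplication by $q^{1/2}+q^{-1/2}$. For the first term we have $C'_sC'_s=(q^{1/2}+q^{-1/2})C'_s$ from the quadratic relation $(T_s-q)(T_s+1)=0$. For the other terms we argue by induction on length, since each $z$ has $sz<z$.

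The second step translates this rule into the $T$-basis. Since $C'_s=q^{-1/2}(T_s+1)$, the rule becomes $T_s\,C'_w=q\,C'_w$. Expand $q^{\ell(w)/2}C'_w=\sum_x P_{x,w}T_x$, and pair the terms for $x$ and $sx$ with $sx>x$. Using $T_sT_x=T_{sx}$ and $T_sT_{sx}=(q-1)T_{sx}+qT_x$, the coefficient of $T_x$ in $T_s\,q^{\ell(w)/2}C'_w$ is $qP_{sx,w}$. Comparing with the coefficient $qP_{x,w}$ of $T_x$ in $q\cdot q^{\ell(w)/2}C'_w$ gives $P_{x,w}=P_{sx,w}$. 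The coefficient of $T_{sx}$ gives the same identity, since $P_{x,w}+(q-1)P_{sx,w}=qP_{sx,w}$.

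The same argument shows that both $x$ and $sx$ lie in the Bruhat interval below $w$ or neither does. This uses the lifting property and is consistent with the zero convention.

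For the right descent statement, I will apply the anti-involution $T_v\mapsto T_{v^{-1}}$. It fixes the Kazhdan--Lusztig basis, so $P_{x,w}=P_{x^{-1},w^{-1}}$. A right descent $t$ of $w$ is a left descent of $w^{-1}$, so the left statement gives
\[
P_{x,w}=P_{x^{-1},w^{-1}}=P_{tx^{-1},w^{-1}}=P_{xt,w}.
\]

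The main obstacle is the first step, the identity $C'_sC'_w=(q^{1/2}+q^{-1/2})C'_w$. It needs the uniqueness characterisation of $C'_w$ and the recursion, not just formal manipulation. Since the statement is cited from \cite[(2.3.g)]{KL}, I would defer to that reference for this identity. If the cited property (2.3.g) is instead taken in its $\mu$-function form, I would derive $T_sC'_w=qC'_w$ directly from the recursion by induction on $\ell(w)$.
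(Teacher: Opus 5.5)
Your proof is correct. The paper gives no argument of its own and simply cites \cite[(2.3.g)]{KL}, and your route is the standard proof of that fact: $C'_sC'_w=(q^{1/2}+q^{-1/2})C'_w$ by induction through the recursion, hence $T_sC'_w=qC'_w$, read off coefficientwise in the $T$-basis, with the anti-involution $T_v\mapsto T_{v^{-1}}$ handling right descents. One small remark: the coefficient comparison runs over all $x$ with $P_{x,w}=0$ for $x\not\le w$, and $P_{x,w}(0)=1$ on the interval, so it already gives $x\le w\iff sx\le w$; the lifting property is a consequence of the argument, not an input to it.
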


Let $I=D_L(w)$ and $J=D_R(w)$, and let $W_I,W_J$ be the corresponding Young subgroups. By Lemma~\ref{lem:descent}, $P_{x,w}$ is constant on each double coset $W_IxW_J$. The lower interval $\{x\le w\}$ is a union of such double cosets, by the lifting property. So it suffices to know $P_{x,w}$ for $x$ maximal in its double coset. These are exactly the $x\le w$ with $D_L(x)\supseteq I$ and $D_R(x)\supseteq J$, and we call $(x,w)$ an \emph{extremal pair}. Write $\hat y=\max(W_IyW_J)$, so that $P_{y,w}=P_{\hat y,w}$.

\begin{proposition}\label{prop:counts}
$S_9$ has $33{,}850{,}056$ extremal pairs and $S_{10}$ has $1{,}252{,}173{,}030$. There are $169{,}438$ distinct pairs $(D_L(w),D_R(w))$ in $S_{10}$.
\end{proposition}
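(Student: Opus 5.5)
The counts are finite enumerations, so the plan is to compute them exactly and to cross-check each one by an independent method. The key input is the characterisation already stated: $(x,w)$ is extremal iff $x\le w$, $D_L(x)\supseteq D_L(w)$ and $D_R(x)\supseteq D_R(w)$. So I group the elements $w$ by the pair $(I,J)=(D_L(w),D_R(w))$. For a fixed pair $(I,J)$ the candidates $x$ are the elements of $S_n$ that are maximal in their $(W_I,W_J)$ double coset. The double cosets $W_I\backslash S_n/W_J$ correspond to contingency tables (nonnegative integer matrices) whose row sums are the block sizes of $I$ and whose column sums are the block sizes of $J$. This is the content of the forthcoming Lemma~\ref{lem:corner}, and the maximal element of each coset can be read off from its table. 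The number of extremal pairs is therefore
\[
\sum_{w}\#\{\text{tables } T \text{ for } (D_L(w),D_R(w)) : \hat x_T\le w\},
\]
where $\hat x_T$ is the maximal representative attached to $T$. The Bruhat test $\hat x_T\le w$ is done by the tableau (rank-matrix) criterion, comparing $\#\{i\le a: x(i)\le b\}\ge\#\{i\le a:w(i)\le b\}$ for all $a,b$.

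The steps are as follows.
\begin{enumerate}
\item For each of the $10!$ permutations, compute its descent pair. Tallying the distinct pairs gives the $169{,}438$.
\item For each distinct pair, enumerate its contingency tables once and store the maximal representatives with their rank matrices.
\item For each $w$, count the representatives that lie below $w$ in Bruhat order. Summing these counts gives $33{,}850{,}056$ for $S_9$ and $1{,}252{,}173{,}030$ for $S_{10}$.
\end{enumerate}
The symmetries $w\mapsto w^{-1}$ and $w\mapsto w_0ww_0$ preserve extremality, since they transform descent sets compatibly. I would use them both to cut the work and as a consistency check.

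For independent checks, at $n\le 6$ I would brute-force all Bruhat pairs and test the descent inclusions directly, and compare against the contingency-table count. For $n=9,10$ I would compare with the number of pairs that \textsf{Coxeter3} stores, since it uses the same extremal reduction. A further check on the $169{,}438$ is that it can be computed as the number of pairs $(I,J)$ realised by some $w$; a pair is realised iff the tables for $(I,J)$ admit one whose maximal representative has descent sets exactly $I$ and $J$.

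The main obstacle is cost, not correctness. Testing Bruhat order over $3.6\times10^6$ choices of $w$ against up to thousands of tables each is roughly $10^{10}$ rank-matrix comparisons. I would first try pruning with the necessary condition $\ell(\hat x_T)\le\ell(w)$, and run the count in parallel over the descent-pair classes. The remaining correctness risk is an off-by-one in reading the maximal representative from a table, which the small-$n$ brute-force comparison is designed to catch.
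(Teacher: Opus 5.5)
Your plan is correct and is essentially the paper's: group the $w$ by $(D_L(w),D_R(w))$, realise the $(W_I,W_J)$ double cosets as contingency tables, count those whose maximal element lies below $w$, and cross-check every extremal pair against \textsf{Coxeter3} (check~2). The one real difference is the Bruhat test. The paper uses Lemma~\ref{lem:corner} to replace the full rank-matrix comparison by the $(p-1)(r-1)$ corner-sum inequalities $S_{T(x)}(a,b)\le S_{T(w)}(a,b)$ on the tables themselves, so the extremal $x$ are enumerated directly as tables with dominated corner sums. This avoids both the cost you worry about and the need to build $\hat x_T$ and its rank matrix for every candidate.
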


\emph{Double cosets as contingency tables.} Let $\alpha$ be the composition of positions cut by $J$ and $\beta$ the composition of values cut by $I$. Right multiplication by $W_J$ permutes positions within the blocks of $\alpha$, and left multiplication by $W_I$ permutes values within the blocks of $\beta$. So the double coset of $y$ is determined by its table $T(y)_{ab}=\#\{i\in\alpha_a:\ y(i)\in\beta_b\}$, a non-negative integer matrix with row sums $\alpha$ and column sums $\beta$, and every such matrix occurs. The double coset with table $T$ has size $\prod_a\alpha_a!\prod_b\beta_b!/\prod_{a,b}T_{ab}!$. Its maximal element is obtained as follows: in each position block, the values appear in decreasing order; each position block takes, from each value block $b$, its $T_{ab}$ largest unused values; and earlier position blocks receive larger values.

\begin{lemma}\label{lem:corner}
Number the position blocks $a=1,\dots,p$ from the top and the value blocks $b=1,\dots,r$ in increasing order of values, and put $i_a=\alpha_1+\dots+\alpha_a$ and $j_b=\beta_1+\dots+\beta_{b-1}+1$, the last position of block $a$ and the smallest value of block $b$. Let $x$ and $w$ be maximal in their $(W_I,W_J)$-double cosets, with tables $T(x)$ and $T(w)$. Then $x\le w$ iff the corner sums $S_T(a,b)=\sum_{a'\le a,\ b'\ge b}T_{a'b'}$ satisfy $S_{T(x)}(a,b)\le S_{T(w)}(a,b)$ for all $1\le a<p$ and $1<b\le r$. (For $a=p$ or $b=1$ the corner sum is a sum of margins, the same for $x$ and $w$.)
\end{lemma}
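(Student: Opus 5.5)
The plan is to reduce to the standard rank-matrix (tableau) criterion for Bruhat order in $S_n$, namely $x\le w$ iff $r_x(i,j)\le r_w(i,j)$ for all $i,j$, where $r_y(i,j)=\#\{i'\le i:\ y(i')\ge j\}$. I would then show that for an element $y$ maximal in its $(W_I,W_J)$-double coset, every $r_y(i,j)$ is given by one fixed formula in the corner sums $S_{T(y)}$, and that this formula is nondecreasing in each corner sum. Necessity is then immediate and sufficiency follows.

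\emph{Step 1: the corner sums are ranks.} At the block corners, $r_y(i_a,j_b)$ counts the positions in blocks $1,\dots,a$ whose values lie in value blocks $b,\dots,r$. By definition of the table this is exactly $S_{T(y)}(a,b)$. Hence if $x\le w$, the rank criterion at $(i_a,j_b)$ gives $S_{T(x)}(a,b)\le S_{T(w)}(a,b)$, which proves the ``only if'' direction. The remark about $a=p$ or $b=1$ is clear, since those corner sums are margins.

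\emph{Step 2: ranks at a block boundary in $i$.} Fix $j$ in value block $b$, so $j_b\le j<j_{b+1}$, and fix a block index $a$. The description of the maximal element says that earlier position blocks receive the largest values of each value block. So the $c=S(a,b)-S(a,b+1)$ values from block $b$ used by position blocks $1,\dots,a$ are the top $c$ values of block $b$. Of these, exactly $\min(c,\,j_{b+1}-j)$ are $\ge j$. This gives
\[
r_y(i_a,j)=\min\bigl(S(a,b),\ S(a,b+1)+(j_{b+1}-j)\bigr),
\]
a nondecreasing function of the corner sums. The boundary conventions ($a=0$, or $b=r$ with $S(\cdot,r+1)=0$) are handled by the same formula.

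\emph{Step 3: interpolating inside a position block.} Now let $i_{a-1}<i\le i_a$. Inside position block $a$ the values of the maximal element decrease. So the entries $\ge j$ in that block occupy an initial segment of it, of length $N=r_y(i_a,j)-r_y(i_{a-1},j)$. Therefore
\[
r_y(i,j)=r_y(i_{a-1},j)+\min(i-i_{a-1},N)=\min\bigl(r_y(i_a,j),\ r_y(i_{a-1},j)+i-i_{a-1}\bigr),
\]
which is again nondecreasing in the corner sums by Step 2. Since $x$ and $w$ share the same margins $\alpha,\beta$, and hence the same $i_a,j_b$, the hypothesis $S_{T(x)}\le S_{T(w)}$ entrywise gives $r_x(i,j)\le r_w(i,j)$ for all $(i,j)$. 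Hence $x\le w$.

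The main obstacle is Step 2. There one must use carefully that the greedy construction hands the largest remaining values to earlier position blocks. One must also check that the count of values $\ge j$ depends only on $c$ and not on the finer arrangement. I would verify this first on a small example before writing it out, together with the decreasing-within-block property used in Step 3.
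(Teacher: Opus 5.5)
Your proposal is correct and follows essentially the same route as the paper. Both reduce to the rank criterion, read off necessity from $r_y(i_a,j_b)=S_{T(y)}(a,b)$, and obtain sufficiency by writing every $r_y(i,j)$ as a monotone min-formula in the corner values. Your Step~2 formula $\min(S(a,b),\,S(a,b+1)+j_{b+1}-j)$ is the paper's interpolation inside a value block, and your Step~3 is its interpolation inside a position block.
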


\begin{proof}
By the tableau criterion, $x\le w$ iff $r_x(i,j)\le r_w(i,j)$ for all $i,j$, where $r_y(i,j)=\#\{i'\le i:\ y(i')\ge j\}$. At block corners, $r_y(i_a,j_b)=S_{T(y)}(a,b)$, which proves necessity.

For sufficiency, let $i_0<i\le i_1$ lie in a position block. For a maximal element the values in the block decrease, so the positions of the block with value $\ge j$ come first. Hence $r_y(i,j)=\min\bigl(r_y(i_0,j)+(i-i_0),\ r_y(i_1,j)\bigr)$. Similarly, let $j$ lie in a value block with values $j_0<\dots<j_1$, and let $i_0$ be a position-block boundary. The values of that block used by positions $\le i_0$ are its largest ones, so $r_y(i_0,j)=\min\bigl(r_y(i_0,j_1+1)+(j_1+1-j),\ r_y(i_0,j_0)\bigr)$, where $r_y(i_0,j_1+1)$ and $r_y(i_0,j_0)$ are values at block corners. These expressions are monotone in the corner values and identical for $x$ and $w$, so the corner inequalities propagate to all $(i,j)$ (Figure~\ref{fig:rank}).
\end{proof}

\begin{figure}[ht]
\centering
\begin{tikzpicture}[scale=0.75]
  \fill[blue!12] (2,0) rectangle (4,-2);
  \foreach \i in {0,...,4} { \draw[gray!50] (0,-\i) -- (4,-\i); \draw[gray!50] (\i,0) -- (\i,-4); }
  \draw[very thick] (0,-1) -- (4,-1); \draw[very thick] (0,-3) -- (4,-3);
  \draw[very thick] (1,0) -- (1,-4); \draw[very thick] (3,0) -- (3,-4);
  \draw[thick] (0,0) rectangle (4,-4);
  \foreach \i/\v in {1/3,2/4,3/1,4/2} { \fill (\v-0.5,-\i+0.5) circle (4pt); }
  \foreach \i in {1,...,4} { \node[left] at (0,-\i+0.5) {\small $\i$}; \node[above] at (\i-0.5,0) {\small $\i$}; }
  \node[left] at (-0.5,-2) {\small position $i$};
  \node[above] at (2,0.45) {\small value $j$};
  \node[right,align=left] at (4.3,-1) {\small $r_w(2,3)=2$};
  \node[right,align=left] at (4.3,-2.2) {\small $=\min\bigl(r_w(1,3)+1,\ r_w(3,3)\bigr)$};
  \node[right,align=left] at (4.3,-3.0) {\small $=\min(1+1,\,2)$};
\end{tikzpicture}
\caption{The permutation $w=3412$ (a dot in row $i$, column $w(i)$) with the block grid of Example~\ref{exa:tables} drawn thick. The shaded region counts $r_w(2,3)=\#\{i'\le2:\ w(i')\ge3\}$. Row $2$ is inside the position block $\{2,3\}$, and $r_w(2,3)$ is determined by the corner values $r_w(1,3)$ and $r_w(3,3)$, as in the proof of Lemma~\ref{lem:corner}.}\label{fig:rank}
\end{figure}

So the extremal $x\le w$ are enumerated directly, as the tables whose corner sums are dominated by those of $T(w)$, without listing the Bruhat interval.

\begin{exa}\label{exa:tables}
Let $w=3412$. Then $D_L(w)=D_R(w)=\{s_2\}$, so $\alpha=\beta=(1,2,1)$: the position blocks are $\{1\},\{2,3\},\{4\}$ and so are the value blocks. The double cosets of $S_4$ correspond to the $3\times3$ tables with row and column sums $(1,2,1)$, and there are seven of them. Their sizes are $4/\prod T_{ab}!$, which sum to $24$. The table lists each maximal element, its table, the size of its double coset, and the corner sums $S_T(a,b)$ for $a\in\{1,2\}$, $b\in\{2,3\}$; the other corner sums are fixed by the margins.
\begin{center}
\small
\renewcommand{\arraystretch}{1.9}
\begin{tabular}{cccccc}
\toprule
maximal element & $T$ & size & $\begin{pmatrix}S_T(1,2)&S_T(1,3)\\S_T(2,2)&S_T(2,3)\end{pmatrix}$ & $\le w$ \\
\midrule
$1324$ & $\left(\begin{smallmatrix}1&0&0\\0&2&0\\0&0&1\end{smallmatrix}\right)$ & $2$ & $\left(\begin{smallmatrix}0&0\\2&0\end{smallmatrix}\right)$ & yes\\
$1432$ & $\left(\begin{smallmatrix}1&0&0\\0&1&1\\0&1&0\end{smallmatrix}\right)$ & $4$ & $\left(\begin{smallmatrix}0&0\\2&1\end{smallmatrix}\right)$ & yes\\
$3214$ & $\left(\begin{smallmatrix}0&1&0\\1&1&0\\0&0&1\end{smallmatrix}\right)$ & $4$ & $\left(\begin{smallmatrix}1&0\\2&0\end{smallmatrix}\right)$ & yes\\
$3412$ & $\left(\begin{smallmatrix}0&1&0\\1&0&1\\0&1&0\end{smallmatrix}\right)$ & $4$ & $\left(\begin{smallmatrix}1&0\\2&1\end{smallmatrix}\right)$ & $=w$\\
$3421$ & $\left(\begin{smallmatrix}0&1&0\\0&1&1\\1&0&0\end{smallmatrix}\right)$ & $4$ & $\left(\begin{smallmatrix}1&0\\\mathbf{3}&1\end{smallmatrix}\right)$ & no\\
$4312$ & $\left(\begin{smallmatrix}0&0&1\\1&1&0\\0&1&0\end{smallmatrix}\right)$ & $4$ & $\left(\begin{smallmatrix}1&\mathbf{1}\\2&1\end{smallmatrix}\right)$ & no\\
$4321$ & $\left(\begin{smallmatrix}0&0&1\\0&2&0\\1&0&0\end{smallmatrix}\right)$ & $2$ & $\left(\begin{smallmatrix}1&\mathbf{1}\\\mathbf{3}&1\end{smallmatrix}\right)$ & no\\
\bottomrule
\end{tabular}
\end{center}
By Lemma~\ref{lem:corner}, a maximal element lies below $w$ iff its corner sums are at most those of $w$; the entries in bold are the ones that exceed them. So $w$ has four extremal pairs, with $x=1324,1432,3214,3412$, while its Bruhat interval has $14$ elements.
\end{exa}

\section{The recursion and the coefficients}\label{sec:recursion}

Pick $s\in D_L(w)$ and put $v=sw$. For extremal $x\le w$ we have $sx<x$, and the standard recursion \cite{KL} reads
\begin{equation}\label{eq:rec}
P_{x,w}=P_{sx,v}+q\,P_{x,v}-\sum_{z\in M}\mu(z,v)\,q^{(\ell(w)-\ell(z))/2}P_{x,z},\qquad M=\{z<v:\ sz<z,\ \mu(z,v)\ne0\}.
\end{equation}
Each $P_{y,u}$ on the right is looked up as $P_{\hat y,u}$ (with hats for the descents of $u$) in the sorted list of extremal pairs of $u$. If $\hat y$ is not in the list then $y\not\le u$ and $P_{y,u}=0$. By the descent theorem \cite[(2.3.e)]{KL}, if $\mu(z,v)\ne0$ and $\ell(v)-\ell(z)>1$ then $D_L(v)\subseteq D_L(z)$ and $D_R(v)\subseteq D_R(z)$. So every $z\in M$ is either a Bruhat cover of $v$, with $\mu=1$, or extremal for $v$, with $\mu(z,v)$ read off the stored polynomial. Elements are processed by increasing length, and equal polynomials share an identifier in a hash table.

\begin{exa}\label{exa:recursion}
Continue Example~\ref{exa:tables} with $w=3412$ and $s=s_2$, so $v=s_2w=2413$. Here $D_L(v)=\{s_1,s_3\}$ and $D_R(v)=\{s_2\}$, and the only extremal pair of $v$ is $(v,v)$; there is no $z<v$ with $s_2z<z$ and $\mu(z,v)\ne0$, so $M=\emptyset$ and \eqref{eq:rec} reads $P_{x,w}=P_{sx,v}+q\,P_{x,v}$. For the four extremal $x$:
\begin{itemize}
\item $x=1324$: $sx=1234$, and both $\widehat{1234}$ and $\widehat{1324}$ (for the descents of $v$) equal $2413=v$. So $P_{x,w}=1+q$.
\item $x=1432$: $sx=1423$ with $\widehat{1423}=v$, so $P_{sx,v}=1$. But $\widehat{1432}=2431$ is longer than $v$, so it is not in the list of $v$, and $P_{x,v}=0$. So $P_{x,w}=1$.
\item $x=3214$ and $x=3412$ behave in the same way, with $\hat x=4213$ and $4321$ not below $v$, and $P_{x,w}=1$.
\end{itemize}
\end{exa}

\subsection{Coefficients}\label{sec:coeff}
By \eqref{eq:duality} and Lemma~\ref{lem:descent},
\begin{equation}\label{eq:coeff}
c_{\lambda,w}=\sum_{v\le w}P_{v,w}(1)\,\phi^\lambda(v)=\sum_{x\text{ extremal for }w}P_{x,w}(1)\,G_\lambda(I,J,x),\qquad G_\lambda(I,J,x)=\sum_{v\in W_IxW_J}\phi^\lambda(v).
\end{equation}
$G_\lambda$ depends on $w$ only through $(I,J)$. For each of the $169{,}438$ classes we run once through $S_{10}$ by depth-first search. The double coset of each $v$ is identified by an additive hash of its table, updated one position at a time. The number of $v$ assigned to each double coset is then checked against the size formula of Section~\ref{sec:extremal}, which rules out hash collisions. For $n\le10$ all values $\phi^\lambda(v)$ are integers, and $|c_{\lambda,w}|\le n!\,\max|\phi^\lambda|\,\max_{x,w}P_{x,w}(1)<2^{62}$, so all arithmetic is exact in $64$-bit integers.

\begin{exa}\label{exa:coeff}
For $w=3412$, the four double cosets of Example~\ref{exa:tables} that lie below $w$ give the following values of $G_\lambda(I,J,x)$, for $\lambda=(4),(3,1),(2,2),(2,1,1),(1^4)$:
\[
G(1324)=(0,0,0,2,0),\quad G(1432)=G(3214)=(0,6,0,-2,0),\quad G(3412)=(8,-8,4,0,0).
\]
With the weights $P_{x,w}(1)=2,1,1,1$ from Example~\ref{exa:recursion}, \eqref{eq:coeff} gives
\[
c_{\cdot,3412}=2\,(0,0,0,2,0)+2\,(0,6,0,-2,0)+(8,-8,4,0,0)=(8,4,4,0,0).
\]
The individual $G_\lambda$ can be negative, and for $\lambda=(2,1,1)$ the coefficient vanishes only because of the weight $P_{1324,3412}(1)=2$.
\end{exa}

\subsection{Graded coefficients}\label{sec:graded}
At generic $q$ the same double-coset reduction gives
\begin{equation}\label{eq:qcoeff}
c_{\lambda,w}(q)=\sum_{x\text{ extremal for }w}P_{x,w}(q)\,G_\lambda(I,J,x)(q),\qquad G_\lambda(I,J,x)(q)=\sum_{v\in W_IxW_J}\phi^\lambda_q(T_v),
\end{equation}
with the same depth-first runs and table hashes as at $q=1$; only the accumulated quantity is now a vector of polynomials. The traces $\phi^\lambda_q(T_v)$ come from the class polynomials of Geck and Pfeiffer \cite[Ch.~8]{GP}. For every trace $\tau$ of $H_n$: if $\ell(sus)=\ell(u)$ then $\tau(T_u)=\tau(T_{sus})$; if $\ell(sus)=\ell(u)-2$ then $\tau(T_u)=(q-1)\tau(T_{su})+q\,\tau(T_{sus})$; and every $u$ reaches a minimal-length element of its class by cyclic shifts that do not increase length, all minimal-length elements of a class having the same trace \cite[Thms.~3.2.9, 8.2.3]{GP}. So $T_v\equiv\sum_\rho f_{v,\rho}(q)\,T_{c_\rho}$ modulo $[H_n,H_n]$ with integer polynomials $f_{v,\rho}$, computed by increasing length. For $h\in H_n$ write $\ch(h)=\sum_a\chi^a_q(h)\,s_a$, where $\chi^a_q$ is the irreducible character of $H_n$ that specialises to $\chi^a$ at $q=1$; the index and sign representations are $T_s\mapsto q$ and $T_s\mapsto-1$, with characters $\chi^{(n)}_q$ and $\chi^{(1^n)}_q$. Since $\chi^a_q=\sum_\lambda K_{a,\lambda}\phi^\lambda_q$ and $h_\lambda=\sum_aK_{a,\lambda}s_a$, we get $\ch(h)=\sum_\lambda\phi^\lambda_q(h)\,h_\lambda$: the coefficient of $h_\lambda$ in $\ch(h)$ is $\phi^\lambda_q(h)$. At the minimal-length element $c_\rho$, a product of Coxeter elements of consecutive blocks of sizes $\rho_1,\rho_2,\dots$, one has $\ch(T_{c_\rho})=\prod_k\sum_j(-1)^jq^{\rho_k-1-j}s_{(\rho_k-j,1^j)}$ \cite{Ram}, which at $q=1$ is the power sum $p_\rho$. Expanding in the $h$ basis gives $\phi^\lambda_q(T_{c_\rho})$, exactly, in rational arithmetic. Since $\deg\phi^\lambda_q(T_v)\le\ell(v)$ and $\deg P_{v,w}\le(\ell(w)-\ell(v)-1)/2$ for $v<w$, while $P_{w,w}=1$, every $c_{\lambda,w}(q)$ has degree at most $\ell(w)$. For $n=10$ the traces and the output each take about $56$\,GB, and the shapes are processed in batches. On $192$ cores the Kazhdan--Lusztig table takes about $100$\,s in both the $q=1$ and the graded run, while the graded coefficient step takes about $38$ times as long as at $q=1$ ($84$ minutes against $131$\,s), since each accumulated $G_\lambda$ is a polynomial of degree at most $45$ instead of an integer.

\begin{exa}\label{exa:traces}
In $S_3$ the classes are represented by $c_{(1^3)}=e$, $c_{(2,1)}=s_1$ and $c_{(3)}=s_1s_2$, and
\[
\ch(T_e)=h_1^3,\qquad \ch(T_{s_1})=(qs_2-s_{11})s_1=(1+q)h_{21}-h_{111},
\]
\[
\ch(T_{s_1s_2})=q^2s_3-qs_{21}+s_{111}=(1+q+q^2)h_3-(2+q)h_{21}+h_{111}.
\]
The other elements are reduced to these. The elements $213=s_1$ and $132=s_2$ both have minimal length in their class, and $312$ is a cyclic shift of $231=s_1s_2$ of the same length, so they share the traces above. For $w_0=321$, conjugating by $s=s_1$ gives $sw_0s=s_2$, of length $\ell(w_0)-2$, so $\tau(T_{321})=(q-1)\tau(T_{312})+q\,\tau(T_{132})$ for every trace $\tau$. The values $\phi^\lambda_q(T_v)$ are:
\begin{center}
\small
\begin{tabular}{lcccc}
\toprule
$v$ & $123$ & $213,\ 132$ & $231,\ 312$ & $321$\\
\midrule
$\phi^{(3)}_q(T_v)$ & $0$ & $0$ & $1+q+q^2$ & $q^3-1$\\
$\phi^{(2,1)}_q(T_v)$ & $0$ & $1+q$ & $-2-q$ & $2$\\
$\phi^{(1^3)}_q(T_v)$ & $1$ & $-1$ & $1$ & $-1$\\
\bottomrule
\end{tabular}
\end{center}
At $q=1$ the rows are the class functions $\phi^\lambda$. The lower interval of $w=231$ is $\{123,213,132,231\}$, with all $P_{v,w}=1$, so
\[
c_{\cdot,231}(q)=\bigl(1+q+q^2,\ q,\ 0\bigr)\qquad(\lambda=(3),(2,1),(1^3)),
\]
which specialises to the values $3,1,0$ of Example~\ref{exa:stability}. Summing over all of $S_3$ gives $c_{\cdot,321}(q)=\bigl((1+q)(1+q+q^2),0,0\bigr)=\bigl([3]_q!,0,0\bigr)$, the graded form of Lemma~\ref{lem:J}(3). The same table comes out of the induced modules of check~12 below, which use neither class polynomials nor \cite{Ram}.
\end{exa}

\section{Verification and results}\label{sec:checks}

\begin{theorem}\label{thm:n10}
For every $\lambda\vdash 10$ and every $w\in S_{10}$, $c_{\lambda,w}(q)\in\Z_{\ge0}[q]$, and it is symmetric and unimodal about $q^{\ell(w)/2}$.
\end{theorem}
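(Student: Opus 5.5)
The proof is a finite exact computation, organised as in Sections~\ref{sec:extremal}--\ref{sec:graded}, followed by a check of the three properties for every output polynomial.

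\emph{Kazhdan--Lusztig polynomials.} First, for each $w\in S_{10}$ I enumerate its extremal pairs. I compute $I=D_L(w)$ and $J=D_R(w)$, form the compositions $\alpha,\beta$, and list the contingency tables whose corner sums are dominated by those of $T(w)$ (Lemma~\ref{lem:corner}). Their maximal elements are exactly the extremal $x\le w$; in total there are the $1{,}252{,}173{,}030$ pairs of Proposition~\ref{prop:counts}. Next I run the recursion \eqref{eq:rec} by increasing $\ell(w)$. Every lookup $P_{y,u}$ is replaced by $P_{\hat y,u}$ (Lemma~\ref{lem:descent}), and a miss means $y\not\le u$, so the value is $0$. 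The set $M$ is restricted, by the descent theorem, to Bruhat covers of $v$ together with extremal $z$ having nonzero top coefficient. This yields $P_{x,w}(q)$ for all extremal pairs.

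\emph{Graded coefficients.} Second, I compute the traces $\phi^\lambda_q(T_v)$ for all $v\in S_{10}$ and $\lambda\vdash10$. I reduce $T_v$ modulo commutators to the minimal-length class representatives $T_{c_\rho}$ using the Geck--Pfeiffer rules, in order of increasing length. I then expand Ram's formula for $\ch(T_{c_\rho})$ in the $h$-basis; the coefficient of $h_\lambda$ there is $\phi^\lambda_q(T_{c_\rho})$. Third, for each of the $169{,}438$ descent classes $(I,J)$ I accumulate the double-coset sums $G_\lambda(I,J,x)(q)$ by one traversal of $S_{10}$, hashing each permutation's table. The size formula $\prod\alpha_a!\prod\beta_b!/\prod T_{ab}!$ serves as a collision check. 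Finally I assemble $c_{\lambda,w}(q)$ from \eqref{eq:qcoeff}. All arithmetic is exact: rationals in the trace step, and integer polynomials with the $2^{62}$ bound per coefficient elsewhere.

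\emph{Checking the statement.} The symmetries $c_{\lambda,w}=c_{\lambda,w^{-1}}=c_{\lambda,w_0ww_0}$ let me process one $w$ per orbit. For each resulting polynomial $c(q)=\sum_j a_jq^j$ I check four things. The coefficients $a_j$ must be integers, which confirms that the rational trace computation produced integers. Each $a_j$ must satisfy $a_j\ge0$. Symmetry about $q^{\ell(w)/2}$ means $a_j=a_{\ell(w)-j}$, and the degree bound $\deg c_{\lambda,w}\le\ell(w)$ from Section~\ref{sec:graded} makes this the right comparison. Unimodality means $a_j\le a_{j+1}$ for $j<\ell(w)/2$. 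Since the statement is a universally quantified finite assertion, passing these checks at every $(\lambda,w)$ proves it.

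\emph{Main obstacle.} The difficulty is correctness and feasibility, not any mathematical step. Both the traces and the output occupy about $56$\,GB, so shapes must be processed in batches. Confidence in the result rests on independent cross-checks, which I would run in this order:
\begin{enumerate}
\item compare every stored $P_{x,w}$ with \textsf{Coxeter3};
\item check Lemma~\ref{lem:J}(3) at every $w_0(\alpha)$, where the graded value should be $\delta_{\lambda,\mathrm{sort}(\alpha)}\prod[\alpha_i]_q!$;
\item check Lemmas~\ref{lem:coproduct} and \ref{lem:qstability} at every reducible $w$ against the $n\le9$ data;
\item verify that the $q=1$ specialisation agrees with a separate integer computation;
\item compare with chromatic quasisymmetric functions at all codominant $w$.
\end{enumerate}
Since the statement itself reduces to reading off the finite table, the step where I expect the real risk is the correctness of the trace reduction. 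It is the least standard ingredient, so I would cross-validate it separately, using induced-module traces computed in small $n$.
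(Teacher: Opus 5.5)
Your proposal is correct and is essentially the paper's own argument. It uses the same exact pipeline: extremal pairs as contingency tables, the recursion with $M$ cut down by the descent theorem, traces from Geck--Pfeiffer class polynomials and Ram's formula, and double-coset sums with the size formula as a collision check, followed by a direct check of positivity, symmetry and unimodality of every $c_{\lambda,w}(q)$ and largely the same cross-checks.

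The differences are only in the safeguards:
\begin{enumerate}
\item Processing one $w$ per orbit of $w\mapsto w^{-1},\,w_0ww_0$ gives up the symmetry tests the paper deliberately keeps. The left-descent method does not build these symmetries in, and they are among the few tests of the graded assembly step.
\item The $2^{62}$ figure is the $q=1$ bound, not a bound for the graded accumulation. The paper proves separate a priori bounds there, or you could simply use arbitrary precision.
\item You single out the trace reduction as the main risk, but propose validating it only at small $n$. The paper validates every trace at $n=10$ through induced and seminormal modules at $q=2,3$.
\end{enumerate}
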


The computation of Sections~\ref{sec:extremal} and~\ref{sec:recursion} gives every $c_{\lambda,w}(q)$. None has a negative coefficient, none is asymmetric and none fails to be unimodal. $S_{10}$ has $981{,}174$ distinct Kazhdan--Lusztig polynomials, of degree at most $12$; the largest coefficient of any $c_{\lambda,w}(q)$ is $35{,}419{,}702$. Table~\ref{tab:n10} gives the support and range of the non-zero values at $q=1$. The ungraded computation took $279$ seconds on a $192$-core machine, with a peak memory of $15.5$\,GB; the graded one took $1$\,h\,$31$\,min, with a peak of $125$\,GB, almost all of it in accumulating \eqref{eq:qcoeff}. All arithmetic is exact in $64$-bit integers, by the a priori bounds at the end of this section. We ran the following checks.
\begin{figure}[t]
\centering
\begin{tikzpicture}[stage/.style={draw,thick,rounded corners,align=center,minimum height=1.3cm,text width=2.9cm,font=\small},
  chk/.style={draw,align=center,font=\footnotesize,fill=gray!8,text width=2.3cm,minimum height=0.9cm},
  >=stealth]
  \node[stage] (A) at (0,0) {extremal pairs\\(tables and\\corner sums,\\Lemma~\ref{lem:corner})};
  \node[stage] (B) at (3.7,0) {KL recursion\\$P_{x,w}$\\(Section~\ref{sec:recursion})};
  \node[stage] (C) at (7.4,0) {double-coset sums\\$G_\lambda(I,J,x)$\\(eq.~\eqref{eq:coeff})};
  \node[stage] (D) at (11.1,0) {coefficients\\$c_{\lambda,w}$};
  \draw[->,thick] (A) -- (B); \draw[->,thick] (B) -- (C); \draw[->,thick] (C) -- (D);
  \node[chk] (c1) at (1.85,2.1) {1.\ sanity of $P_{x,w}$};
  \node[chk] (c2) at (1.85,-2.1) {2.\ Coxeter3,\\all pairs};
  \node[chk] (c9) at (5.55,-2.1) {9.\ smaller $n$,\\full intervals};
  \node[chk] (c34) at (9.25,2.1) {3.\ $J$-values\\4.\ symmetries};
  \node[chk] (c5) at (9.25,-2.1) {5.\ KL inversion};
  \node[chk] (c678) at (12.3,2.1) {6.\ two columns\\7.\ codominant\\8.\ coproduct};
  \draw[->,gray] (c1) -- (B);
  \draw[->,gray] (c2) -- (A); \draw[->,gray] (c2) -- (B);
  \draw[->,gray] (c9) -- (B); \draw[->,gray] (c9) -- (D);
  \draw[->,gray] (c34) -- (D); \draw[->,gray] (c5) -- (D); \draw[->,gray] (c678) -- (D);
\end{tikzpicture}
\caption{The computation of Theorem~\ref{thm:n10} (top row) and the stages tested by the checks of Section~\ref{sec:checks}. Check~2 compares every extremal pair, so it also tests that the enumerated pairs are the right ones. Checks~3, 4 and~6--8 test the output against facts proved independently of the computation.}\label{fig:pipeline}
\end{figure}

\begin{enumerate}
\item \emph{Sanity of the polynomials.} Every stored $P_{x,w}$ has constant term $1$ and non-negative coefficients, $P_{w,w}=1$, and $\deg P_{x,w}\le(\ell(w)-\ell(x)-1)/2$ for $x<w$.
\item \emph{Independent software.} For every $n\le 10$, every extremal pair, including all $1{,}252{,}173{,}030$ in $S_{10}$, was compared with du~Cloux's \textsf{Coxeter3} \cite{Cox3}, with no discrepancies. This took $2.4$ hours on $190$ cores.
\item \emph{$J$-values.} For all $2^9=512$ compositions $\alpha$ of $10$ and all $\lambda$, $c_{\lambda,w_0(\alpha)}=\alpha!\,\delta_{\lambda,\mathrm{sort}(\alpha)}$ (Lemma~\ref{lem:J}(3)).
\item \emph{Symmetries.} $c_{\lambda,w}=c_{\lambda,w^{-1}}=c_{\lambda,w_0ww_0}$ for all $\lambda,w$. The left-descent recursion does not build these in.
\item \emph{Kazhdan--Lusztig inversion.} For $2000$ random $u\in S_{10}$ and every $\lambda$, the coefficient of $x_{1,u(1)}\cdots x_{n,u(n)}$ in $\sum_wc_{\lambda,w}\Imm_w$ equals $\phi^\lambda(u)$. This tests the coefficients against the inversion formula, using the stored polynomials at all pairs $x\le w$, not only extremal ones. The check is exact over $\Z$: the sum $\sum|P(1)|\,|c_{\lambda,w}|$ bounds every partial sum and is below $2^{41}$, so no $64$-bit partial sum wraps around, and the sums were also accumulated modulo a prime $p<2^{31}$.
\item \emph{Two columns.} For $\lambda_1\le2$, $c_{\lambda,u}=0$ whenever $u$ contains $321$, a theorem that follows from Fan and Green's description of the Temperley--Lieb quotient \cite{FG} (see \cite[Lemma~8.1]{McN}).
\item \emph{Codominant elements.} For each of the $16{,}796$ codominant ($312$-avoiding) $w$, $c_{\lambda,w}=\Imm_\lambda(A_w)$, where $A_w$ is the $0/1$ matrix with $(A_w)_{ij}=[j\le\max(w(1),\dots,w(i))]$ \cite[Prop.~7.1]{McN}. The right-hand side was evaluated without any Kazhdan--Lusztig computation. Check~13 covers the same $w$ for every $q$, against reference values that do not depend on \cite{McN}.
\item \emph{Coproduct rule.} For all $799{,}475$ decomposable $w=w_1\times w_2$ and all $\lambda$ ($33{,}577{,}950$ coefficients), $c_{\lambda,w}$ agrees with Lemma~\ref{lem:coproduct} applied to independent runs for $S_m$, $m<10$.
\item \emph{Smaller $n$.} For $n\le9$ the engine reproduces every $c_{\lambda,w}$ of an independent full-interval engine, which stores all $10{,}501{,}351{,}657$ Bruhat pairs of $S_9$. For small $n$ every stored $P_{x,w}$ agrees with a reference implementation and with SageMath.
\item \emph{Graded output at $q=1$.} $c_{\lambda,w}(1)$ equals the value of the ungraded run for every $\lambda,w$.
\item \emph{Graded symmetries and stability.} $c_{\lambda,w}(q)=c_{\lambda,w^{-1}}(q)=c_{\lambda,w_0ww_0}(q)$ for all $\lambda,w$, and Lemma~\ref{lem:qstability} holds exactly against a separate run for $S_9$. The method builds in neither.
\item \emph{Traces from the Hecke relations.}
  \begin{enumerate}
  \item \emph{Induced modules, $n\le6$.} For $n\le6$ and $q=2,3$, every $c_{\lambda,w}(q)$ equals the trace of $\sum_vP_{v,w}(q)T_v$ on the induced modules $M^\mu=H_n\otimes_{H_\mu}(\text{index})$, built from the defining relations alone, without class polynomials or character formulas, and with Kazhdan--Lusztig polynomials from a separate implementation of the standard recursion. For $n\le5$ the comparison is exact over $\Z$; for $n=6$ (modules of dimension up to $720$) it is made modulo the primes $2^{20}-3$ and $2^{20}-5$, with no mismatch.
  \item \emph{Induced modules, $n=10$.} The six modules $M^\mu$ with $\mu=(9,1)$, $(8,2)$, $(8,1,1)$, $(7,3)$, $(6,4)$, $(5,5)$, of dimension at most $252$, are traversed depth first along a spanning tree of the weak order. For every $v\in S_{10}$ the trace $\eta^\mu_q(T_v)$ agrees with $\sum_\lambda N_{\mu\lambda}\phi^\lambda_q(T_v)$ computed from the class polynomials, where $h_\mu=\sum_\lambda N_{\mu\lambda}m_\lambda$.
  \item \emph{Seminormal modules, $n=10$.} Every trace column is tested in the same way through the $42$ irreducible characters. Hoefsmit's seminormal form \cite{Hoe74,Ram97} gives explicit matrices for the $T_s$ on the irreducible modules $S^\mu$, of dimension at most $768$, from the combinatorics of standard tableaux; they satisfy the quadratic and braid relations (checked for every shape of size at most $8$). Traversed in the same way, $\chi^\mu_q(T_v)$ agrees with $\sum_\lambda K_{\mu\lambda}\phi^\lambda_q(T_v)$ for every $\mu\vdash10$ and every $v\in S_{10}$, where $s_\mu=\sum_\lambda K_{\mu\lambda}m_\lambda$. Since the Kostka matrix is unitriangular, this tests each of the $42$ traces $\phi^\lambda_q(T_v)$ at every $v$.
  \item \emph{Sampled $w$, $n=10$.} For $300$ random $w\in S_{10}$ containing $312$, with $P_{v,w}=P_{v^*,w}$ from the extremal pairs listed by \textsf{Coxeter3} ($v^*$ the maximum of $W_{D_L(w)}vW_{D_R(w)}$, Lemma~\ref{lem:descent}), $\sum_\lambda N_{\mu\lambda}c_{\lambda,w}(q)$ agrees with $\sum_vP_{v,w}(q)\eta^\mu_q(T_v)$ for the six $\mu$, and the stored $c_{\lambda,w}(q)$ agree with $\sum_vP_{v,w}(q)\phi^\lambda_q(T_v)$ for all $42$ shapes. The last comparison takes the traces $\phi^\lambda_q(T_v)$ as given and tests only the step \eqref{eq:qcoeff}; the others use no class polynomial. The comparisons at $n=10$ are made at $q=2,3$ modulo the prime $2^{31}-19$, with no mismatch.
  \end{enumerate}
\item \emph{Chromatic quasisymmetric functions.} For each of the $16{,}796$ codominant $w$ and all $42$ shapes, $c_{\lambda,w}(q)$ equals the coefficient of $e_\lambda$ in the chromatic quasisymmetric function of the indifference graph of $w$, computed from proper colourings without any Kazhdan--Lusztig or Hecke algebra input.
\end{enumerate}
Check~2 covers the polynomials, and checks~3--8 the coefficient step \eqref{eq:coeff}, partly with no Kazhdan--Lusztig input at all (checks~3, 6 and~7). Checks~10--13 cover the graded step \eqref{eq:qcoeff}; check~13 is independent of the whole pipeline. Figure~\ref{fig:pipeline} shows which stage each check tests, and Table~\ref{tab:depend} which code each check shares with the computation.

\begin{table}[ht]
\small
\begin{tabular}{rp{2.3cm}p{6.4cm}p{2.6cm}}
\toprule
check & code shared & compared with & stages tested\\
\midrule
1 & E, K & constant term and degree bound of KL polynomials & K\\
2 & none & \textsf{Coxeter3} (independent C\texttt{++}), every extremal pair and its $x$-list & E, K\\
3 & E, K, $\Phi$, S & Lemma~\ref{lem:J}(3), a theorem & $\Phi$, S\\
4 & E, K, $\Phi$, S & symmetries of $P_{v,w}$, not built into the recursion & K, S\\
5 & K, $\Phi$ & identity \eqref{eq:duality} at all pairs $x\le w$, without the double-coset sums & S\\
6 & E, K, $\Phi$, S & Fan--Green \cite{FG}, a theorem & K, S\\
7 & $\Phi$ & $\Imm_\lambda(A_w)$, no Kazhdan--Lusztig input & E, K, S (codominant $w$)\\
8 & E, K, $\Phi$, S & Lemma~\ref{lem:coproduct}, separate runs for $S_m$, $m<10$ & consistency in $n$\\
9 & $\Phi$ & full-interval engine (separate code); for small $n$ a reference implementation and SageMath & E, K, S ($n\le9$)\\
10 & E, K, S & the ungraded run, which uses $\Phi$ instead of T & T at $q=1$\\
11 & E, K, S, T & symmetries and Lemma~\ref{lem:qstability}, not built in & T\\
12 & none; at $n=10$ Lemma~\ref{lem:descent}, which S also uses, and T as input to one comparison & traces on induced modules built from the Hecke relations, and at $n=10$ on Hoefsmit's seminormal modules; KL polynomials by the standard recursion ($n\le6$) or \textsf{Coxeter3} ($n=10$) & all ($n\le6$); at $n=10$, T at every $v$ (all $42$ characters) and all stages at $300$ sampled $w$\\
13 & none & chromatic quasisymmetric functions from colourings, via the dictionary of \cite{CHSS} & all (codominant $w$)\\
\bottomrule
\end{tabular}
\caption{Code shared by each check of Section~\ref{sec:checks} with the computation. The stages are: E, the enumeration of extremal pairs (Lemma~\ref{lem:corner}); K, the Kazhdan--Lusztig recursion (Section~\ref{sec:recursion}); $\Phi$, the table of $\phi^\lambda$ on conjugacy classes; S, the double-coset sums \eqref{eq:coeff} and \eqref{eq:qcoeff}; T, the traces $\phi^\lambda_q(T_v)$ from class polynomials (Section~\ref{sec:graded}).}\label{tab:depend}
\end{table}

\emph{Independence.} At $n=10$ the stages E, K, S and T (at $q=1$) are each tested by a check that shares none of their code: E and K by check~2, S by checks~5 and~7, and T at $q=1$ by check~10. The table $\Phi$ is tested against a theorem by check~3, and recomputed from scratch in check~12 for $n\le6$. Agreement between programs that share code could hide a common error, and the table shows where this matters. At $q=2$ and $q=3$, modulo $2^{31}-19$, T is tested independently at every $v\in S_{10}$ by check~12, through all $42$ irreducible characters; it is also tested by check~11. These are evaluations at two values of $q$, not identities of polynomials in $q$. The weakest point is S at generic $q$ for non-codominant $w$, the assembly \eqref{eq:qcoeff} of the polynomials $c_{\lambda,w}(q)$ from the traces. At $q=1$ the same step is tested independently by check~5, at $2000$ random $u$, and by check~7, at all $16{,}796$ codominant $w$. At generic $q$ it is tested by check~11 and by the symmetry and unimodality counts, which the method does not build in, and independently only end to end, at $300$ sampled $w$ and at $q=2,3$ (check~12). An exhaustive independent test would need the sum over all $\sim10^{12}$ Bruhat pairs of $S_{10}$. The checks that fix the conventions for $T_s$, $C'_w$ and the Frobenius map independently are check~12, from the defining relations ($n\le6$ for every $w$, $n=10$ as above), and check~13, from colourings (codominant $w$, $n\le10$); a convention error would have to pass both.

\emph{Integer safety.} No intermediate value can leave the range of signed $64$-bit integers, by three bounds that are a priori except for two numbers read off the finished tables, as explained below. (i) Let $A(u)$ be the sum of the absolute values of the coefficients of all the class polynomials $f_{u,\rho}$. No element of length at most $2$ admits a conjugation $u\mapsto sus$ with $\ell(sus)=\ell(u)-2$, so these elements have $A(u)=1$; cyclic shifts do not change $A$, and the second rule of Section~\ref{sec:graded} gives $A(u)\le2A(su)+A(sus)$. Hence $A(u)\le M_{\ell(u)}$ with $M_0=M_1=M_2=1$ and $M_\ell=2M_{\ell-1}+M_{\ell-2}$, and the same bounds every partial sum of the recursion. For $n=10$, $M_{45}\approx3.48\times10^{16}<2^{61}$. (ii) Every partial sum of $\phi^\lambda_q(T_v)=\sum_\rho f_{v,\rho}\,\phi^\lambda_q(T_{c_\rho})$ is at most $M_{45}\beta$, where $\beta=120$ is the largest sum of the absolute values of the coefficients of a $\phi^\lambda_q(T_{c_\rho})$, computed in rational arithmetic. So it is at most $4.17\times10^{18}<2^{63}$. (iii) In \eqref{eq:qcoeff} the double-coset sums are at most $n!\,m$, and every partial sum is at most $m\cdot n!\cdot\max_{x,w}P_{x,w}(1)=6.68\times10^{13}$, where $m$ is the largest sum of the absolute values of the coefficients of a trace $\phi^\lambda_q(T_v)$. The numbers $m$ and $\max P_{x,w}(1)$ are read off the finished tables, which are exact by (i), (ii) and check~2. Degrees are guarded separately: a coefficient beyond the stored degree stops the run. The bound at $q=1$ in Section~\ref{sec:coeff} is the analogue of (iii). For comparison, the largest coefficient that occurs is $35{,}419{,}702$.

\begin{table}[ht]
\small
\begin{tabular}{lrr@{\qquad}lrr}
\toprule
$\lambda$ & $\#\{w:c_{\lambda,w}>0\}$ & range & $\lambda$ & $\#\{w:c_{\lambda,w}>0\}$ & range\\
\midrule
$10$ & 2{,}829{,}325 & $10$--$384{,}025{,}920$ & $4\,3^{2}$ & 1{,}446{,}300 & $3$--$192{,}364$\\
$9\,1$ & 3{,}376{,}010 & $8$--$253{,}386{,}624$ & $4\,3\,2\,1$ & 1{,}844{,}393 & $1$--$209{,}952$\\
$8\,2$ & 2{,}887{,}431 & $16$--$109{,}171{,}840$ & $4\,3\,1^{3}$ & 659{,}730 & $2$--$17{,}984$\\
$8\,1^{2}$ & 3{,}431{,}287 & $7$--$37{,}088{,}768$ & $4\,2^{3}$ & 397{,}450 & $4$--$17{,}248$\\
$7\,3$ & 2{,}847{,}535 & $21$--$51{,}336{,}714$ & $4\,2^{2}\,1^{2}$ & 527{,}279 & $2$--$11{,}572$\\
$7\,2\,1$ & 3{,}441{,}650 & $7$--$25{,}228{,}512$ & $4\,2\,1^{4}$ & 103{,}683 & $2$--$736$\\
$7\,1^{3}$ & 2{,}511{,}423 & $6$--$1{,}264{,}516$ & $4\,1^{6}$ & 5{,}358 & $3$--$160$\\
$6\,4$ & 2{,}803{,}637 & $24$--$24{,}591{,}232$ & $3^{3}\,1$ & 500{,}625 & $2$--$17{,}020$\\
$6\,3\,1$ & 3{,}362{,}247 & $12$--$11{,}369{,}056$ & $3^{2}\,2^{2}$ & 308{,}894 & $2$--$11{,}588$\\
$6\,2^{2}$ & 2{,}458{,}248 & $8$--$2{,}578{,}752$ & $3^{2}\,2\,1^{2}$ & 367{,}081 & $1$--$4{,}836$\\
$6\,2\,1^{2}$ & 2{,}709{,}706 & $4$--$867{,}808$ & $3^{2}\,1^{4}$ & 49{,}280 & $2$--$440$\\
$6\,1^{4}$ & 680{,}307 & $5$--$48{,}560$ & $3\,2^{3}\,1$ & 133{,}373 & $1$--$1{,}340$\\
$5^{2}$ & 2{,}534{,}300 & $15$--$9{,}236{,}960$ & $3\,2^{2}\,1^{3}$ & 55{,}642 & $1$--$232$\\
$5\,4\,1$ & 3{,}064{,}042 & $6$--$6{,}169{,}664$ & $3\,2\,1^{5}$ & 7{,}421 & $1$--$48$\\
$5\,3\,2$ & 2{,}486{,}584 & $4$--$1{,}581{,}664$ & $3\,1^{7}$ & 309 & $2$--$8$\\
$5\,3\,1^{2}$ & 2{,}542{,}958 & $4$--$464{,}880$ & $2^{5}$ & 5{,}662 & $2$--$32$\\
$5\,2^{2}\,1$ & 1{,}954{,}375 & $3$--$240{,}900$ & $2^{4}\,1^{2}$ & 8{,}640 & $1$--$16$\\
$5\,2\,1^{3}$ & 839{,}778 & $3$--$34{,}700$ & $2^{3}\,1^{4}$ & 2{,}167 & $1$--$8$\\
$5\,1^{5}$ & 76{,}455 & $4$--$1{,}200$ & $2^{2}\,1^{6}$ & 301 & $1$--$4$\\
$4^{2}\,2$ & 1{,}660{,}725 & $4$--$482{,}904$ & $2\,1^{8}$ & 25 & $1$--$2$\\
$4^{2}\,1^{2}$ & 1{,}807{,}360 & $3$--$195{,}552$ & $1^{10}$ & 1 & $1$--$1$\\
\bottomrule
\end{tabular}
\caption{Support size and range of the non-zero coefficients $c_{\lambda,w}$, $n=10$.}\label{tab:n10}
\end{table}

\section{Consequences for \texorpdfstring{$n\le10$}{n<=10}}\label{sec:consequences}

\begin{proof}[Proof of Theorem~\ref{thm:main} and Corollary~\ref{cor:cones}]
Theorem~\ref{thm:n10} and Corollary~\ref{cor:down} give Theorem~\ref{thm:main}. Then $\CKL=\CJ$ by Theorem~\ref{thm:cones}, and the chain $\CKL\subseteq\CTNN\subseteq\CJ$ gives $\CKL=\CTNN=\CJ$.
\end{proof}

\subsection{At generic $q$}
Positivity at $q$ is coefficientwise, so the analogues of the cones are $\N[q]$-semimodules of traces of $H_n$ with values in $\Z[q]$. Let $\CKL(q)$ be the set of traces $\tau$ with $\tau(q^{\ell(w)/2}C'_w)\in\N[q]$ for all $w\in S_n$, and let $\CJ(q)=\sum_\lambda\N[q]\,\phi^\lambda_q$. Since $q^{\ell(w_0(\mu))/2}C'_{w_0(\mu)}=\sum_{v\in S_\mu}T_v$ and $\phi^\lambda_q$ takes the value $\delta_{\lambda\mu}[\mu]_q!$ there ($[\mu]_q!=\prod_i[\mu_i]_q!$; Haiman \cite[Prop.~4.1(1)]{Hai}), the coordinates of $\tau=\sum_\lambda a_\lambda\phi^\lambda_q$ are $a_\mu=\tau\bigl(\sum_{v\in S_\mu}T_v\bigr)/[\mu]_q!$, and $\tau\in\CJ(q)$ iff all of them lie in $\N[q]$. The positivity part of Conjecture~\ref{conj:Hq} is the inclusion $\CJ(q)\subseteq\CKL(q)$, and Theorem~\ref{thm:main} gives it for $n\le10$.

Unlike at $q=1$, this inclusion need not be an equality, and the evaluations at the $w_0(\mu)$ do not detect membership in $\CKL(q)$. Let $\CJ^{\mathrm{ev}}(q)$ be the set of traces with $\tau(\sum_{v\in S_\mu}T_v)\in\N[q]$ for all $\mu$. Then $\CKL(q)\subseteq\CJ^{\mathrm{ev}}(q)$ for every $n$, and for $n\le10$
\[
\CJ(q)\subseteq\CKL(q)\subseteq\CJ^{\mathrm{ev}}(q).
\]
The first inclusion is strict for $n=2$: the trace $(1-q+q^2)\phi^{(2)}_q$ takes the values $0$ at $T_e$ and $(1-q+q^2)(1+q)=1+q^3$ at $q^{1/2}C'_s=T_e+T_s$, so it lies in $\CKL(q)$, but its coordinate $1-q+q^2$ is not in $\N[q]$. The second is strict for every $n\ge3$: the trace $\tau=(1-q+q^2)\phi^{(2,1^{n-2})}_q$ has a single non-zero evaluation at the $w_0(\mu)$, namely $(1-q+q^2)[2]_q=1+q^3$, so $\tau\in\CJ^{\mathrm{ev}}(q)$; but by Lemma~\ref{lem:qstability} and Example~\ref{exa:traces}, $c_{(2,1^{n-2}),\,231\times e_{n-3}}(q)=c_{(2,1),231}(q)=q$, so $\tau$ takes the value $q-q^2+q^3$ there and $\tau\notin\CKL(q)$. At $q=1$ the difference disappears, because a real number $a_\mu$ with $\mu!\,a_\mu\ge0$ is itself non-negative. So the graded form of Corollary~\ref{cor:cones} is the inclusion $\CJ(q)\subseteq\CKL(q)$ for $n\le10$, and nothing more. This inclusion still lifts every relation of the normalised order to $q$:

\begin{corollary}\label{cor:qorder}
Let $n\le10$ and let $a,b\vdash n$ satisfy the criterion of Theorem~\ref{thm:order}. Then $\chi^a_q(q^{\ell(w)/2}C'_w)/f^a-\chi^b_q(q^{\ell(w)/2}C'_w)/f^b$ has non-negative rational coefficients for every $w\in S_n$.
\end{corollary}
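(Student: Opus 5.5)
The plan is to write the graded difference as a combination of monomial traces and then invoke the graded positivity of Theorem~\ref{thm:main}. Since $\chi^a_q=\sum_\lambda K_{a,\lambda}\phi^\lambda_q$ (Section~\ref{sec:hecke}), for every $h\in H_n$ we have
\[
\frac{\chi^a_q(h)}{f^a}-\frac{\chi^b_q(h)}{f^b}=\sum_{\lambda\vdash n}\Bigl(\frac{K_{a,\lambda}}{f^a}-\frac{K_{b,\lambda}}{f^b}\Bigr)\phi^\lambda_q(h).
\]
The coefficients here are rational constants, independent of $q$. The criterion of Theorem~\ref{thm:order} says exactly that each of them is $\ge0$. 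So the trace $\tau=\chi^a_q/f^a-\chi^b_q/f^b$ is a non-negative rational combination of the $\phi^\lambda_q$.

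Next, evaluate at $h=q^{\ell(w)/2}C'_w$. Each term becomes $\bigl(K_{a,\lambda}/f^a-K_{b,\lambda}/f^b\bigr)\,c_{\lambda,w}(q)$. By Theorem~\ref{thm:main}, valid for $n\le10$, each $c_{\lambda,w}(q)$ lies in $\Z_{\ge0}[q]$. A finite sum of such polynomials, each scaled by a non-negative rational constant, has non-negative rational coefficients, which is the claim. This is the inclusion $\CJ(q)\subseteq\CKL(q)$ extended to $\mathbb{Q}_{\ge0}$-coefficients, which is harmless because positivity is preserved under non-negative scalar multiples.

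There is no real obstacle. The only care needed is to note that the coefficients of the trace are constants and not polynomials in $q$, so that coefficientwise positivity passes through the sum. One must also use the $q$-analogue of the Kostka expansion rather than the $q=1$ statement. That expansion holds by the definition of the $\phi^\lambda_q$ via the inverse-Kostka relation.
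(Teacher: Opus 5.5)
Your proposal is correct and follows the paper's own proof. Like the paper, you expand $\chi^a_q/f^a-\chi^b_q/f^b=\sum_\mu\bigl(K_{a,\mu}/f^a-K_{b,\mu}/f^b\bigr)\phi^\mu_q$, note that the criterion makes these constant coefficients non-negative, then evaluate at $q^{\ell(w)/2}C'_w$ and apply Theorem~\ref{thm:main}.
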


\begin{proof}
$\chi^a_q/f^a-\chi^b_q/f^b=\sum_\mu(K_{a,\mu}/f^a-K_{b,\mu}/f^b)\phi^\mu_q$, with non-negative coefficients by the criterion; evaluate at $q^{\ell(w)/2}C'_w$ and apply Theorem~\ref{thm:main}.
\end{proof}

\subsection{At $q=1$}
In terms of irreducible characters, for $\phi=\sum_a\gamma_a\chi^a$ we have $d_\phi(J_\mu)=\mu!\sum_a\gamma_aK_{a,\mu}$. So for $n\le10$, every linear inequality between irreducible-character immanants on $\TNN_n$ is decided by a finite check on Kostka numbers. The same holds for inequalities known on positive semidefinite matrices, since each $J_\mu$ is positive semidefinite; for normalised characters this observation is Stembridge's \cite[\S3]{Ste92} (see \cite[Prop.~6.5]{McN}).

\subsection{The order of normalised immanants}
For $a,b\vdash n$ write $a\succeq b$ if $N_a\ge N_b$ on $\TNN_n$.

\begin{proof}[Proof of Theorem~\ref{thm:order}]
$f^af^b(N_a-N_b)=d_\phi$ with $\phi=f^b\chi^a-f^a\chi^b$, and $d_\phi(J_\mu)=\mu!\,(f^bK_{a,\mu}-f^aK_{b,\mu})$ by Lemma~\ref{lem:J}. So the criterion is $\phi\in\CJ$, and $a\succeq b$ is $\phi\in\CTNN$. They agree by Corollary~\ref{cor:cones}.
\end{proof}

The necessity of the criterion holds for every $n$, since $J_\mu\in\TNN_n$; it is Stembridge's \cite[Prop.~3.1]{Ste92}, who notes that his conjecture gives the converse (see also \cite[Prop.~6.3]{McN}). The content of Theorem~\ref{thm:order} is the converse for $n\le10$.

Taking $\mu=b$ shows that $a\succeq b$ implies $K_{a,b}>0$, so $\succeq$ is contained in dominance order for every $n$. Figure~\ref{fig:hasse} shows the Hasse diagrams for $n\le7$, and Appendix~\ref{app:hasse} those for $n=8,9,10$.

\begin{exa}\label{exa:kostka}
Let $n=4$, $a=(3,1)$ and $b=(2,2)$, so $a$ dominates $b$, and $f^a=3$, $f^b=2$. For $\mu=(4),(3,1),(2,2),(2,1,1),(1^4)$,
\[
\frac{K_{a,\mu}}{f^a}=\Bigl(0,\tfrac13,\tfrac13,\tfrac23,1\Bigr),\qquad \frac{K_{b,\mu}}{f^b}=\Bigl(0,0,\tfrac12,\tfrac12,1\Bigr).
\]
The criterion for $a\succeq b$ fails at $\mu=(2,2)$, and indeed $N_a(J_{22})=4\cdot\frac13<N_b(J_{22})=4\cdot\frac12$. The criterion for $b\succeq a$ fails at $\mu=(3,1)$, where $N_b(J_{31})=0<N_a(J_{31})=2$. So $(3,1)$ and $(2,2)$ are incomparable in $\succeq$, which is the first failure of dominance in Theorem~\ref{thm:structure}(1).
\end{exa}

\begin{figure}[ht]
\centering
\resizebox{0.16\textwidth}{!}{\begin{tikzpicture}[every node/.style={font=\scriptsize,inner sep=1.5pt}]
  \node (p4) at (0.00,3.30) {$4$};
  \node (p3_1) at (-0.62,2.20) {$3\,1$};
  \node (p2_2) at (0.62,2.20) {$2^{2}$};
  \node (p2_1_1) at (0.00,1.10) {$2\,1^{2}$};
  \node (p1_1_1_1) at (0.00,0.00) {$1^{4}$};
  \draw[gray] (p2_1_1) -- (p1_1_1_1);
  \draw[gray] (p2_2) -- (p2_1_1);
  \draw[gray] (p3_1) -- (p2_1_1);
  \draw[gray] (p4) -- (p2_2);
  \draw[gray] (p4) -- (p3_1);
\end{tikzpicture}}\hfill
\resizebox{0.18\textwidth}{!}{\begin{tikzpicture}[every node/.style={font=\scriptsize,inner sep=1.5pt}]
  \node (p5) at (0.00,5.50) {$5$};
  \node (p4_1) at (0.00,4.40) {$4\,1$};
  \node (p3_2) at (0.00,3.30) {$3\,2$};
  \node (p3_1_1) at (-0.62,2.20) {$3\,1^{2}$};
  \node (p2_2_1) at (0.62,2.20) {$2^{2}\,1$};
  \node (p2_1_1_1) at (0.00,1.10) {$2\,1^{3}$};
  \node (p1_1_1_1_1) at (0.00,0.00) {$1^{5}$};
  \draw[gray] (p2_1_1_1) -- (p1_1_1_1_1);
  \draw[gray] (p2_2_1) -- (p2_1_1_1);
  \draw[gray] (p3_1_1) -- (p2_1_1_1);
  \draw[gray] (p3_2) -- (p2_2_1);
  \draw[gray] (p3_2) -- (p3_1_1);
  \draw[gray] (p4_1) -- (p3_2);
  \draw[gray] (p5) -- (p4_1);
\end{tikzpicture}}\hfill
\resizebox{0.24\textwidth}{!}{\begin{tikzpicture}[every node/.style={font=\scriptsize,inner sep=1.5pt}]
  \node (p6) at (0.00,6.60) {$6$};
  \node (p5_1) at (0.00,5.50) {$5\,1$};
  \node (p4_2) at (-0.62,4.40) {$4\,2$};
  \node (p4_1_1) at (-1.25,3.30) {$4\,1^{2}$};
  \node (p3_3) at (0.62,4.40) {$3^{2}$};
  \node (p3_2_1) at (0.00,3.30) {$3\,2\,1$};
  \node (p3_1_1_1) at (-0.62,2.20) {$3\,1^{3}$};
  \node (p2_2_2) at (1.25,3.30) {$2^{3}$};
  \node (p2_2_1_1) at (0.62,2.20) {$2^{2}\,1^{2}$};
  \node (p2_1_1_1_1) at (0.00,1.10) {$2\,1^{4}$};
  \node (p1_1_1_1_1_1) at (0.00,0.00) {$1^{6}$};
  \draw[gray] (p2_1_1_1_1) -- (p1_1_1_1_1_1);
  \draw[gray] (p2_2_1_1) -- (p2_1_1_1_1);
  \draw[gray] (p2_2_2) -- (p2_2_1_1);
  \draw[gray] (p3_1_1_1) -- (p2_1_1_1_1);
  \draw[gray] (p3_2_1) -- (p2_2_1_1);
  \draw[gray] (p3_2_1) -- (p3_1_1_1);
  \draw[gray] (p3_3) -- (p2_2_2);
  \draw[gray] (p3_3) -- (p3_2_1);
  \draw[gray] (p4_1_1) -- (p2_2_1_1);
  \draw[gray] (p4_1_1) -- (p3_1_1_1);
  \draw[gray] (p4_2) -- (p2_2_2);
  \draw[gray] (p4_2) -- (p3_2_1);
  \draw[gray] (p4_2) -- (p4_1_1);
  \draw[gray] (p5_1) -- (p3_3);
  \draw[gray] (p5_1) -- (p4_2);
  \draw[gray] (p6) -- (p5_1);
\end{tikzpicture}}\hfill
\resizebox{0.3\textwidth}{!}{\begin{tikzpicture}[every node/.style={font=\scriptsize,inner sep=1.5pt}]
  \node (p7) at (0.00,9.90) {$7$};
  \node (p6_1) at (0.00,8.80) {$6\,1$};
  \node (p5_2) at (0.00,7.70) {$5\,2$};
  \node (p5_1_1) at (-0.62,6.60) {$5\,1^{2}$};
  \node (p4_3) at (0.62,6.60) {$4\,3$};
  \node (p4_2_1) at (-0.62,5.50) {$4\,2\,1$};
  \node (p4_1_1_1) at (-1.25,3.30) {$4\,1^{3}$};
  \node (p3_3_1) at (0.62,5.50) {$3^{2}\,1$};
  \node (p3_2_2) at (0.00,4.40) {$3\,2^{2}$};
  \node (p3_2_1_1) at (0.00,3.30) {$3\,2\,1^{2}$};
  \node (p3_1_1_1_1) at (-0.62,2.20) {$3\,1^{4}$};
  \node (p2_2_2_1) at (1.25,3.30) {$2^{3}\,1$};
  \node (p2_2_1_1_1) at (0.62,2.20) {$2^{2}\,1^{3}$};
  \node (p2_1_1_1_1_1) at (0.00,1.10) {$2\,1^{5}$};
  \node (p1_1_1_1_1_1_1) at (0.00,0.00) {$1^{7}$};
  \draw[gray] (p2_1_1_1_1_1) -- (p1_1_1_1_1_1_1);
  \draw[gray] (p2_2_1_1_1) -- (p2_1_1_1_1_1);
  \draw[gray] (p2_2_2_1) -- (p2_2_1_1_1);
  \draw[gray] (p3_1_1_1_1) -- (p2_1_1_1_1_1);
  \draw[gray] (p3_2_1_1) -- (p2_2_1_1_1);
  \draw[gray] (p3_2_1_1) -- (p3_1_1_1_1);
  \draw[gray] (p3_2_2) -- (p2_2_2_1);
  \draw[gray] (p3_2_2) -- (p3_2_1_1);
  \draw[gray] (p3_3_1) -- (p3_2_2);
  \draw[gray] (p4_1_1_1) -- (p2_2_1_1_1);
  \draw[gray] (p4_1_1_1) -- (p3_1_1_1_1);
  \draw[gray] (p4_2_1) -- (p3_2_2);
  \draw[gray] (p4_2_1) -- (p4_1_1_1);
  \draw[gray] (p4_3) -- (p3_3_1);
  \draw[gray] (p4_3) -- (p4_2_1);
  \draw[gray] (p5_1_1) -- (p3_3_1);
  \draw[gray] (p5_1_1) -- (p4_2_1);
  \draw[gray] (p5_2) -- (p4_3);
  \draw[gray] (p5_2) -- (p5_1_1);
  \draw[gray] (p6_1) -- (p5_2);
  \draw[gray] (p7) -- (p6_1);
\end{tikzpicture}}
\caption{The order $\succeq$ of normalised immanants on $\TNN_n$ for $n=4,5,6,7$. Elements are placed at the height of the longest chain below them.}\label{fig:hasse}
\end{figure}

\begin{theorem}\label{thm:structure}
Let $n\le10$.
\begin{enumerate}
\item $\succeq$ equals dominance order iff $n\le3$. The first pair that is comparable in dominance but not in $\succeq$ is $(3,1)$ and $(2,2)$.
\item $\succeq$ is a lattice iff $n\le5$. For $n=6$, the elements $(4,2)$ and $(3,3)$ have two maximal common lower bounds, $(3,2,1)$ and $(2,2,2)$, and hence no meet (Figure~\ref{fig:nolattice}).
\item The hook chain $(n)\succ(n-1,1)\succ\cdots\succ(1^n)$ and the two-column chain $(2^k,1^{n-2k})\succ(2^{k-1},1^{n-2k+2})$ are chains of $\succeq$. The two-row chain $(n-k,k)$, $0\le k\le n/2$, is a chain only for $n\in\{1,2,3,5,7\}$. For even $n\ge4$ it fails at $(n/2+1,n/2-1)\not\succeq(n/2,n/2)$, and for $n=9$ at $(6,3)\not\succeq(5,4)$.
\item Conjugation $a\mapsto a'$ is not an anti-automorphism of $\succeq$ for $n\ge4$.
\end{enumerate}
\end{theorem}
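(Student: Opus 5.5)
By Theorem~\ref{thm:order}, for $n\le10$ the relation $a\succeq b$ is equivalent to the finite criterion $K_{a,\mu}/f^a\ge K_{b,\mu}/f^b$ for all $\mu\vdash n$. So the whole statement reduces to exact rational computation with Kostka numbers and the hook length formula. The plan is to compute, for each $n\le10$, the full Kostka matrix (at most $42\times42$) and the dimensions $f^a$. From these we form the relation $\succeq$ as a Boolean matrix, check that it is a partial order (antisymmetry follows from containment in dominance), and extract its Hasse diagram. Every item is then a finite check on these $10$ posets. For each failure we also record the witness $\mu$ at which the criterion fails, since Theorem~\ref{thm:order} then gives the explicit inequality $N_a(J_\mu)<N_b(J_\mu)$ on a totally non-negative matrix.

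For (1), we compare $\succeq$ with dominance for each $n\le10$. For $n\le3$, dominance is a chain and the criterion can be checked by hand. Example~\ref{exa:kostka} already supplies the pair $(3,1),(2,2)$ at $n=4$. For every $n\ge4$ up to $10$ we need a pair comparable in dominance but not in $\succeq$. Stability of the witness as $n$ grows is not automatic, so we simply read such a pair off the computed tables.

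For (2), we test the lattice property directly. For each pair we compute the set of common lower bounds and check whether it has a unique maximal element; the dual condition for joins is handled the same way. This holds for $n\le5$. For $n=6$ we exhibit the pair $(4,2),(3,3)$. The claim there is that $(3,2,1)$ and $(2,2,2)$ are both below each of them, that they are incomparable with each other (in dominance $(3,2,1)\trianglerighteq(2,2,2)$, so we must show $(3,2,1)\not\succeq(2,2,2)$ with an explicit $\mu$, presumably $\mu=(2,2,2)$), and that nothing else lies between. For $n=7,\dots,10$ we exhibit a pair without a meet or without a join from the computation.

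For (3), the hook chain and the two-column chain hold for all $n$ in the range. Each cover $(n-k,1^k)\succ(n-k-1,1^{k+1})$ is a single criterion check, using the closed forms $K_{(n-k,1^k),\mu}=\binom{\ell(\mu)-1}{k}$ and $f=\binom{n-1}{k}$. The ratio comparison then becomes $\binom{\ell-1}{k}/\binom{n-1}{k}\ge\binom{\ell-1}{k+1}/\binom{n-1}{k+1}$, which amounts to $(n-1-k)\ge(\ell-1-k)$ and so holds for all $n$. The two-column case is handled by conjugate-free Kostka formulas, or by computation. For two rows we check consecutive pairs and give the stated failures: $(n/2+1,n/2-1)$ versus $(n/2,n/2)$ at $\mu=(n/2,n/2)$, and $(6,3)$ versus $(5,4)$ at some $\mu$ found by the search.

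For (4), we need a pair with $a\succeq b$ but $b'\not\succeq a'$ for each $4\le n\le10$. At $n=4$, $(2,2)$ is self-conjugate and $(3,1)'=(2,1,1)$, so we look for such asymmetry among the Hasse edges of Figure~\ref{fig:hasse}, and by machine for larger $n$.

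The main obstacle is not mathematical. It is making the finite verifications trustworthy and exact: rational arithmetic, a Kostka computation independent of the main pipeline, and recording explicit witnesses $\mu$ so that each negative claim is checkable by hand.
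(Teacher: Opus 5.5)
Your proposal is correct and takes the same route as the paper, whose entire proof is ``finite computations with Theorem~\ref{thm:order}''. Your explicit witnesses also check out: $K_{(3,2,1),(2,2,2)}/16=1/8<1/5$ at $\mu=(2,2,2)$, and $\mu=(5,4)$ is a witness for $(6,3)\not\succeq(5,4)$ because $f^{(6,3)}=48>42=f^{(5,4)}$. Your closed-form treatment of the hook chain is a nice hand proof of that item, but note that when $\binom{\ell(\mu)-1}{k}=0$ the comparison is simply $0\ge0$; it does not reduce to $\ell(\mu)\le n$ there.
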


\begin{proof}
Finite computations with Theorem~\ref{thm:order}.
\end{proof}

Theorem~\ref{thm:structure} is a statement about $n\le10$ only, the range of the exhaustive computation; we know of no argument for larger $n$. A failure of a relation $a\succeq b$ is witnessed by some $J_\mu$ and so holds without Theorem~\ref{thm:order}, while every assertion that a relation holds, and hence the lattice property for $n\le5$, uses it.

\begin{figure}[ht]
\centering
\begin{tikzpicture}[x=1.5cm,y=1.1cm,every node/.style={font=\small}]
  \node (42) at (0,3) {$(4,2)$};
  \node (33) at (2,3) {$(3,3)$};
  \node (411) at (-1,2) {$(4,1,1)$};
  \node (321) at (1,2) {$(3,2,1)$};
  \node (222) at (2.6,2) {$(2,2,2)$};
  \node (3111) at (0,1) {$(3,1^3)$};
  \node (2211) at (1.8,1) {$(2^2,1^2)$};
  \draw (42) -- (411); \draw (42) -- (321); \draw (42) -- (222);
  \draw (33) -- (321); \draw (33) -- (222);
  \draw (411) -- (3111); \draw (411) -- (2211);
  \draw (321) -- (3111); \draw (321) -- (2211); \draw (222) -- (2211);
  \draw[dashed,gray] (42) -- (33);
\end{tikzpicture}
\caption{Part of the order $\succeq$ for $n=6$ (Theorem~\ref{thm:structure}(2)). Solid lines are covers of $\succeq$ among the seven partitions shown. In dominance $(4,2)$ is above $(3,3)$ (dashed), but in $\succeq$ they are incomparable. Their common lower bounds have two maximal elements, $(3,2,1)$ and $(2,2,2)$, which are incomparable, so $(4,2)\wedge(3,3)$ does not exist. $(4,1,1)$ lies below $(4,2)$ but not below $(3,3)$.}\label{fig:nolattice}
\end{figure}

For every $n$, $(n)$ is the top and $(1^n)$ the bottom element of $\succeq$, and the hook chain is a chain. On $\TNN_n$ the hook chain is due to Skandera \cite[Thm.~27]{Ska25}; the companion paper gives a proof that both statements hold at the level of Kazhdan--Lusztig immanants \cite[Prop.~6.4, Cor.~5.1]{McN}. Table~\ref{tab:structure} gives the size of the order.

\begin{table}[ht]
\small
\begin{tabular}{rrrrrrrrr}
\toprule
$n$ & 3&4&5&6&7&8&9&10\\
\midrule
partitions & 3&5&7&11&15&22&30&42\\
comparable pairs in $\succeq$ & 3&9&20&48&96&198&375&723\\
comparable pairs in dominance & 3&10&21&53&101&216&400&776\\
covers & 2&5&7&16&21&40&55&102\\
height (longest chain) & 3&4&6&7&10&12&15&16\\
width (largest antichain) & 1&2&2&3&3&5&5&7\\
\bottomrule
\end{tabular}
\caption{Size of the normalised order $\succeq$ on partitions of $n$.}\label{tab:structure}
\end{table}

\subsection{Pate's inequalities}
\begin{corollary}\label{cor:pate}
Let $n\le10$, let $\lambda\vdash n$ with $\lambda_1\ge2$, and let $\mu$ be obtained from $\lambda$ by removing its last column and adding its boxes to the bottom of the first column. Then $N_\lambda\ge N_\mu$ on $\TNN_n$, and the difference is KL-positive.
\end{corollary}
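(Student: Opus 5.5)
The plan is to reduce Corollary~\ref{cor:pate} to the Kostka criterion of Theorem~\ref{thm:order}, and to get KL-positivity from Corollary~\ref{cor:cones}. Write $\phi=f^\mu\chi^\lambda-f^\lambda\chi^\mu$, so that $f^\lambda f^\mu(N_\lambda-N_\mu)=d_\phi$. By Lemma~\ref{lem:J}(2),
\[
d_\phi(J_\nu)=\nu!\,\bigl(f^\mu K_{\lambda,\nu}-f^\lambda K_{\mu,\nu}\bigr).
\]
Hence $\phi\in\CJ$ if and only if $K_{\lambda,\nu}/f^\lambda\ge K_{\mu,\nu}/f^\mu$ for every $\nu\vdash n$. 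Once this holds, Corollary~\ref{cor:cones} gives $\phi\in\CKL=\CTNN$. This yields both $N_\lambda\ge N_\mu$ on $\TNN_n$ and, by \eqref{eq:duality}, that $d_\phi=\sum_w\phi(\Cp w)\Imm_w$ has non-negative coefficients, which is the KL-positivity claim. By Corollary~\ref{cor:qorder} the inequality even lifts to generic $q$.

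So everything rests on the Kostka inequality for the specific pair $(\lambda,\mu)$. Since $\lambda$ is a partition of $n\le10$, there are only finitely many pairs to check. I would enumerate all $\lambda\vdash n$ with $\lambda_1\ge2$ for $n\le10$. For each, I form $\mu$ by deleting the last column of $\lambda$ (of height $\lambda'_{\lambda_1}$) and appending that many boxes to the first column. I then compute $f^\lambda,f^\mu$ by the hook length formula and $K_{\lambda,\nu},K_{\mu,\nu}$ for all $\nu$, and verify the inequality in exact rational arithmetic. This is the same kind of finite check as in Theorem~\ref{thm:structure}, and it uses only Kostka numbers, not the Kazhdan--Lusztig tables.

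The one thing to confirm before running it is that $\mu$ really is a partition and differs from $\lambda$. If $\lambda_1\ge2$, removing the last column leaves a partition with first column still of length $\ell(\lambda)$. Adding boxes below it gives a valid partition that $\lambda$ dominates strictly, since boxes move down and left. Dominance is necessary for $\lambda\succeq\mu$, because $\succeq$ is contained in dominance, so this is consistent. A useful sanity check along the way is the hook case: for $\lambda=(k,1^{n-k})$ the move gives $(k-1,1^{n-k+1})$, and the inequality must reproduce the hook chain of Theorem~\ref{thm:structure}(3).

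The main obstacle is not conceptual. It is to make sure the finite check is correctly set up. If some pair failed, the corollary would be false at that $n$, and by Theorem~\ref{thm:order} a $J_\nu$ would witness the failure. So the computation is decisive either way, and I would report the number of pairs checked for each $n$.
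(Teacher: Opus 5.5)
Your argument is correct, but it reaches the Kostka inequalities $K_{\lambda,\nu}/f^\lambda\ge K_{\mu,\nu}/f^\mu$ by a different route from the paper.

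\textbf{Your route.} You obtain these inequalities by enumerating all Pate pairs with $n\le10$ and checking them directly.

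\textbf{The paper's route.} The paper observes that each $J_\nu$ is positive semidefinite, being a direct sum of rank-one blocks $\mathbf{1}\mathbf{1}^{T}$. Pate's theorem \cite{Pat91} on positive semidefinite matrices then gives $N_\lambda(J_\nu)\ge N_\mu(J_\nu)$ for every $\nu\vdash n$. So the difference lies in $\CJ$ with no computation at all. The proof then finishes exactly as yours does: $\CJ=\CKL=\CTNN$ by Corollary~\ref{cor:cones}.

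\textbf{What each buys.} The paper's route explains why the Kostka check can never fail, so your remark that a failing pair would refute the corollary is moot. It is also uniform: the corollary holds at any $n$ for which $\CKL=\CJ$ is known. Your route is self-contained, since it does not depend on Pate's theorem, but it costs an extra finite computation. As written you only describe that computation and do not report its outcome, so your argument is complete only once it is run. The paper has in effect run it: it remarks that the Kostka criterion of Theorem~\ref{thm:order} holds for every Pate pair with $n\le12$, so your check would succeed.
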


\begin{proof}
Pate \cite{Pat91} proved $N_\lambda\ge N_\mu$ on positive semidefinite matrices, so the difference lies in $\CJ$, which equals $\CKL$ for $n\le10$ by Corollary~\ref{cor:cones}.
\end{proof}

\begin{exa}\label{exa:pate}
For $\lambda=(3,2)$ the last column has one box, which moves to the bottom of the first column, giving $\mu=(2,2,1)$:
\[
\begin{tikzpicture}[scale=0.38,baseline=(c.base)]
  \foreach \x/\y in {0/0,1/0,0/-1,1/-1} \draw (\x,\y) rectangle ++(1,1);
  \fill[gray!40] (2,0) rectangle ++(1,1); \draw (2,0) rectangle ++(1,1);
  \node (c) at (1.5,-0.6) {};
\end{tikzpicture}
\ \longmapsto\
\begin{tikzpicture}[scale=0.38,baseline=(c.base)]
  \foreach \x/\y in {0/0,1/0,0/-1,1/-1} \draw (\x,\y) rectangle ++(1,1);
  \fill[gray!40] (0,-2) rectangle ++(1,1); \draw (0,-2) rectangle ++(1,1);
  \node (c) at (1,-0.6) {};
\end{tikzpicture}
\]
Here $f^\lambda=f^\mu=5$, and for $\nu=(5),(4,1),(3,2),(3,1,1),(2,2,1),(2,1^3),(1^5)$,
\[
K_{\lambda,\nu}=(0,0,1,1,2,3,5),\qquad K_{\mu,\nu}=(0,0,0,0,1,2,5),
\]
so $K_{\lambda,\nu}/f^\lambda-K_{\mu,\nu}/f^\mu=\frac15(0,0,1,1,1,1,0)\ge0$, as Theorem~\ref{thm:order} requires.
\end{exa}

Independently of Pate's theorem, the Kostka criterion of Theorem~\ref{thm:order} holds for every Pate pair with $n\le12$.

\subsection{An inequality between several normalised immanants}
At the identity matrix every $N_a$ equals $1$, so the permanent cannot dominate a sum of two or more of them. Measured against the determinant, however, it can.

\begin{proposition}\label{prop:excess}
Let $n\le10$ and let $S$ be a set of partitions of $n$ different from $(n)$ and $(1^n)$. Then
\[
\per-\det\ \ge\ \sum_{a\in S}\bigl(N_a-\det\bigr)\quad\text{on }\TNN_n
\qquad\iff\qquad
\sum_{a\in S}\frac{K_{a,\mu}}{f^a}\le1\quad\text{for all }\mu\ne1^n .
\]
\end{proposition}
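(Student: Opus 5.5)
The plan is to write the difference as a class-function immanant and apply Corollary~\ref{cor:cones}, exactly as in the proof of Theorem~\ref{thm:order}. Note first that $\per=N_{(n)}$ and $\det=N_{(1^n)}$, since $f^{(n)}=f^{(1^n)}=1$. Put
\[
\phi=\chi^{(n)}-\chi^{(1^n)}-\sum_{a\in S}\Bigl(\frac{\chi^a}{f^a}-\chi^{(1^n)}\Bigr),
\]
so that $d_\phi=(\per-\det)-\sum_{a\in S}(N_a-\det)$. The inequality on $\TNN_n$ says that $\phi\in\CTNN$. For $n\le10$, Corollary~\ref{cor:cones} turns this into $\phi\in\CJ$, that is, $d_\phi(J_\mu)\ge0$ for every $\mu\vdash n$.

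Next we evaluate $d_\phi$ at the block matrices using Lemma~\ref{lem:J}(2): $d_\phi(J_\mu)=\mu!\,\langle\phi,\eta^\mu\rangle$, where $\langle\chi^a,\eta^\mu\rangle=K_{a,\mu}$. We have $K_{(n),\mu}=1$ for every $\mu$, while $K_{(1^n),\mu}=\delta_{\mu,1^n}$. This gives
\[
d_\phi(J_\mu)=\mu!\Bigl(1-\delta_{\mu,1^n}-\sum_{a\in S}\Bigl(\frac{K_{a,\mu}}{f^a}-\delta_{\mu,1^n}\Bigr)\Bigr).
\]

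There are now two cases.
\begin{enumerate}
\item For $\mu=1^n$ we have $K_{a,1^n}=f^a$, so every term vanishes and $d_\phi(J_{1^n})=0$. This matches the remark that every $N_a$ equals $1$ at the identity matrix, so this $\mu$ imposes no condition.
\item For $\mu\ne1^n$ the condition $d_\phi(J_\mu)\ge0$ reads $\sum_{a\in S}K_{a,\mu}/f^a\le1$. This is exactly the stated criterion.
\end{enumerate}
Hence the inequality holds on $\TNN_n$ if and only if the Kostka condition holds, which gives both directions.

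The necessity direction does not need the computation, since $J_\mu\in\TNN_n$ by Lemma~\ref{lem:J}(1). Only sufficiency uses $\CJ=\CTNN$, and so the restriction $n\le10$. There is no real obstacle here; the only care needed is the bookkeeping of the $\det$ terms at $\mu=1^n$, so that the condition there is correctly seen to be vacuous.
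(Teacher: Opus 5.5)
Your proposal is correct and follows essentially the same route as the paper: the same class function $\phi=\chi^{(n)}+(|S|-1)\chi^{(1^n)}-\sum_{a\in S}\chi^a/f^a$, evaluated at the $J_\mu$ via Lemma~\ref{lem:J}(2) and the Kostka numbers $K_{(n),\mu}=1$, $K_{(1^n),\mu}=\delta_{\mu,1^n}$, $K_{a,1^n}=f^a$, followed by Corollary~\ref{cor:cones}. Your remark that only sufficiency uses $\CJ=\CTNN$ (and hence $n\le10$) is also accurate.
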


\begin{proof}
The difference of the two sides is $d_\phi$ with $\phi=\chi^{(n)}+(|S|-1)\chi^{(1^n)}-\sum_{a\in S}\chi^a/f^a$. We have $\langle\phi,\eta^\mu\rangle=1-\sum_SK_{a,\mu}/f^a$ for $\mu\ne1^n$, and $\langle\phi,\eta^{1^n}\rangle=0$. Apply Corollary~\ref{cor:cones}.
\end{proof}

The largest such sets have $1,2,2,3,3,4,4,4$ elements for $n=3,\dots,10$. The smallest example in which total non-negativity is essential occurs at $n=6$.

\begin{exa}\label{ex:six}
On $\TNN_6$,
\begin{equation}\label{eq:six}
\per+2\det\ \ge\ \frac{\Imm_{\chi^{3111}}}{10}+\frac{\Imm_{\chi^{222}}}{5}+\frac{\Imm_{\chi^{21111}}}{5}.
\end{equation}
After multiplying by $10$, the values of $(\text{left}-\text{right})(J_\mu)/\mu!$ are:
\begin{center}
\small
\begin{tabular}{c*{11}{c}}
\toprule
$\mu$ & $6$ & $51$ & $42$ & $411$ & $33$ & $321$ & $31^3$ & $2^3$ & $2^21^2$ & $21^4$ & $1^6$\\
\midrule
value & $10$ & $10$ & $10$ & $10$ & $10$ & $10$ & $9$ & $8$ & $7$ & $0$ & $0$\\
\bottomrule
\end{tabular}
\end{center}
Equality holds at $J_{21^4}$ and at the identity.
\end{exa}

\begin{proposition}\label{prop:notpairwise}
Inequality \eqref{eq:six} fails on some entrywise non-negative matrices, and it is not a non-negative combination of two-term inequalities $\alpha N_a-\beta N_b\ge0$ valid on $\TNN_6$ ($\alpha,\beta\ge0$). This includes $\per\ge N_a$, $N_a\ge0$, and every relation of $\succeq$.
\end{proposition}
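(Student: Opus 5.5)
The proposition has two parts, and I would prove both by exact finite computations in $\CF_6$, written in the basis $\{\phi^\mu\}$. In that basis a class function $\phi$ has coordinates $\langle\phi,\eta^\mu\rangle=d_\phi(J_\mu)/\mu!$ by Lemma~\ref{lem:J}(2). Let $\phi=10\chi^{(6)}+20\chi^{(1^6)}-\chi^{3111}-2\chi^{222}-2\chi^{21111}$, so that $d_\phi$ is ten times the difference of the two sides of \eqref{eq:six}. Its coordinates are the row of the table in Example~\ref{ex:six}.

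\emph{Failure on entrywise non-negative matrices.} For a permutation matrix $P_\sigma$ we have $d_\phi(P_\sigma)=\phi(\sigma^{-1})=\phi(\sigma)$. So it suffices to find a conjugacy class of $S_6$ on which $\phi$ is negative. I would compute $\phi$ on all $11$ classes from the character table.

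The transposition class gives nothing: by the content formula, $\chi^a(t)/f^a$ is the content sum of $a$ divided by $15$, and this gives $\phi(t)=0$. So I would look at $3$-cycles, double transpositions and the longer cycle types. We know $\sum_\sigma\phi(\sigma)=720\langle\phi,\eta^{(6)}\rangle>0$ and $\phi(e)=0$. Hence if $\phi$ is not non-negative on every class, some permutation matrix is the required witness.

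If instead $\phi\ge0$ on every class, I would fall back on a non-negative combination $\sum_\sigma t_\sigma P_\sigma$, for example $I+sP_\sigma+tP_\tau$ with non-commuting $\sigma,\tau$. Then $d_\phi$ becomes a polynomial in $s,t$ whose mixed monomials carry character values on products. I would search for $s,t>0$ that make it negative.

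\emph{Not a combination of two-term inequalities.} First I would describe all valid two-term inequalities. By Theorem~\ref{thm:order} and Corollary~\ref{cor:cones}, $\alpha N_a-\beta N_b\ge0$ on $\TNN_6$ holds iff $\alpha K_{a,\mu}/f^a\ge\beta K_{b,\mu}/f^b$ for every $\mu$. For a fixed pair $(a,b)$, the admissible $(\alpha,\beta)$ therefore form the cone spanned by $(1,0)$ and $(1,\beta^*_{ab})$, where
\[
\beta^*_{ab}=\min_{\mu:\,K_{b,\mu}>0}\frac{K_{a,\mu}/f^a}{K_{b,\mu}/f^b}.
\]
The minimum is over a non-empty set, because $K_{b,b}>0$. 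Consequently the set of all non-negative combinations is the finitely generated cone $\mathcal G$ spanned by the $N_a$ and the $N_a-\beta^*_{ab}N_b$. This cone contains $\per\ge N_a$, $N_a\ge0$ and every relation of $\succeq$, as the statement requires.

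Next I would separate $d_\phi$ from $\mathcal G$ by Farkas' lemma. The goal is a vector $y=(y_\mu)_{\mu\vdash6}$ with $\sum_\mu y_\mu K_{a,\mu}/f^a\ge0$ and $\sum_\mu y_\mu\bigl(K_{a,\mu}/f^a-\beta^*_{ab}K_{b,\mu}/f^b\bigr)\ge0$ for all $a,b$, but $\sum_\mu y_\mu\langle\phi,\eta^\mu\rangle<0$. This is a rational LP with $11$ variables and at most $11+110$ constraints. I would solve it exactly and record the certificate $y$, which anyone can then check by hand.

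A natural first guess is to put the weight of $y$ near $\mu=(2^3),(2^2,1^2),(2,1^4)$, where the table drops. Intuitively, any pairwise combination that reproduces the coordinates $8,7,0,0$ there must overshoot, or be forced negative, somewhere else.

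The main obstacle is this separation step. I am not sure in advance that a clean certificate exists with small support, so I would rely on the LP solver and then verify its output exactly. The computation of $\beta^*_{ab}$ from the Kostka numbers for $n=6$ is routine.
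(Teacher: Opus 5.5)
Your plan is sound. The first part is the paper's argument, but you never name the witness. At a $6$-cycle, $\chi^{(6-k,1^k)}=(-1)^k$ and $\chi^{222}=0$, so $\phi=10-20+1-0-2=-11$ there, and the permutation matrix of a $6$-cycle already works. The fallback to $I+sP_\sigma+tP_\tau$ is therefore unnecessary, and the facts $\sum_\sigma\phi(\sigma)>0$ and $\phi(e)=0$ play no role.

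For the second part you take a genuinely different route from the paper: an exact description of the cone of valid two-term inequalities, followed by LP separation.
\begin{itemize}
\item \emph{Your route.} It is correct, and it needs only the implication from validity on $\TNN_6$ to non-negativity at every $J_\mu$, not Corollary~\ref{cor:cones}. Since the statement is true, Farkas' lemma guarantees a rational certificate. As written, however, the proof is a solver run you have not made.
\item \emph{The paper's route.} No solver is needed. Every valid term $\alpha_iN_{a_i}-\beta_iN_{b_i}$ is non-negative at $I$, where each $N_a$ equals $1$, so $\alpha_i\ge\beta_i$. Both sides of \eqref{eq:six} are equal at $I$, so $\alpha_i=\beta_i$ for all $i$. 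By linear independence of the $N_a$, the positive coefficient of $\det$ must then come from a term $\alpha_i(\det-N_b)$ with $b\ne1^6$. Such a term fails at $J_b$, since $\det(J_b)=0<N_b(J_b)$.
\end{itemize}

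In your language, this is the certificate $L(\psi)=\psi(e)-\langle\psi,\chi^{(1^6)}\rangle$. It is non-negative on every generator $N_a$ and $N_a-\beta^*_{ab}N_b$ of your cone; this uses $\beta^*_{ab}\le1$ (from $\mu=1^6$) and $\beta^*_{1^6,b}=0$ for $b\ne1^6$. Meanwhile $L(\phi)=0-20<0$. In your $\eta^\mu$-coordinates it is $y_{1^6}=0$ and $y_\mu=(-1)^{\ell(\mu)+1}\binom{\ell(\mu)}{m_1(\mu),m_2(\mu),\dots}$ otherwise, with $m_i(\mu)$ the number of parts of $\mu$ equal to $i$. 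That is, $y_\mu$ is minus the coefficient of $h_\mu$ in $e_6$, with the term at $1^6$ cancelled by $\psi(e)=\langle\psi,\eta^{1^6}\rangle$. So the certificate has almost full support with alternating signs. Your guess of concentrating weight near $(2^3),(2^2,1^2),(2,1^4)$ is looking in the wrong coordinates; the natural ones are the $N_a$.

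What each route buys: the LP is mechanical and works for any target, but its certificate is opaque. The paper's argument identifies the obstruction: tightness at $I$ together with a positive coefficient of $\det$, which no valid two-term inequality can supply. It therefore applies verbatim to every inequality of Proposition~\ref{prop:excess} with $|S|\ge2$.
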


\begin{proof}
For the permutation matrix of a $6$-cycle, ten times the difference of the two sides is the value of the class function at a $6$-cycle, which is $-11$. Suppose $T=\per+2\det-N_{3111}-N_{222}-N_{21111}$ is a combination $\sum_i(\alpha_iN_{a_i}-\beta_iN_{b_i})$ of such inequalities. Each term is non-negative at the identity, so $\alpha_i\ge\beta_i$. Since $T(I)=0$, every term has $\alpha_i=\beta_i$. The $N_a$ are linearly independent, so the coefficient $2$ of $N_{1^6}=\det$ must come from a term $\alpha_i(\det-N_b)$ with $b\ne1^6$. But $\det(J_b)=0<N_b(J_b)$, so such a term is not valid on $\TNN_6$.
\end{proof}

\section{Beyond class functions}\label{sec:beyond}

Let $\Pi_n$ be the $n!$-dimensional span of the permutation monomials $x^v=x_{1,v(1)}\cdots x_{n,v(n)}$. Every $F\in\Pi_n$ is uniquely $F=\sum_wa_w\Imm_w$, and we put
\[
\CKL(\Pi_n)=\{F:\ a_w\ge0\ \forall w\},\qquad \CTNN(\Pi_n)=\{F:\ F\ge0\text{ on }\TNN_n\}.
\]
Then $\CKL(\Pi_n)\subseteq\CTNN(\Pi_n)$, and the class-function cones are their intersections with the immanants $d_\phi$. Let $E\subseteq\R^{S_n}$ be the closed convex cone generated by the evaluation vectors $(\Imm_v(A))_{v\in S_n}$, $A\in\TNN_n$. It lies in the non-negative orthant, and $\CTNN(\Pi_n)$ is its dual cone. Call $w$ \emph{TNN-isolated} if the coordinate vector $e_w$ lies in $E$. By separation,
\[
w\ \text{is TNN-isolated}\iff a_w\ge0\ \text{for every }F=\sum_va_v\Imm_v\in\CTNN(\Pi_n),
\]
so $\CKL(\Pi_n)=\CTNN(\Pi_n)$ iff every $w$ is TNN-isolated. These cones go back to Stembridge \cite[\S5]{Ste92}, who studied the cone of total positivity in $\R S_n$, whose dual is $\CTNN(\Pi_n)$, and to Rhoades--Skandera \cite[\S7]{RS06}. Rhoades and Skandera noted $\CKL(\Pi_n)\subsetneq\CTNN(\Pi_n)$ for $n\ge4$ and asked whether every smooth $w$ is TNN-isolated \cite[Question~7.1]{RS06}. Skandera \cite[Thms.~5.4, 5.5]{Ska08} proved that it is, for every $n$: he constructs $A_w\in\TNN_n$ with $\Imm_v(A_w)=\delta_{v,w}$.

\emph{Charts.} A \emph{chart} is a sequence of elementary factors $I+tE_{i+1,i}$ and $I+tE_{i,i+1}$ whose lower letters form a reduced word of $w_0$, as do its upper letters, in any interleaving. By Fomin--Zelevinsky \cite{FZ}, positive parameters give every totally positive matrix up to a positive diagonal factor, which multiplies every element of $\Pi_n$ by its determinant. Since totally positive matrices are dense in $\TNN_n$, an $F$ whose expansion in some chart has only non-negative coefficients is non-negative on $\TNN_n$. In a fixed chart, $\Imm_w=\sum_\alpha\kappa_\alpha(w)t^\alpha$ with $\kappa_\alpha(w)\ge0$. Write $\kappa_\alpha=(\kappa_\alpha(w))_w$, and let $\mathcal N$ be the Newton polytope, the convex hull of the exponents $\alpha$ with $\kappa_\alpha\ne0$.

\begin{theorem}[isolation criterion]\label{thm:isolation}
$w$ is TNN-isolated iff some vertex $\alpha$ of $\mathcal N$ has $\supp\kappa_\alpha=\{w\}$.
\end{theorem}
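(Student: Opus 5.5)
The plan is to prove both implications by working on the positive orthant of a fixed chart. Let $v(t)=(\Imm_u(A(t)))_{u\in S_n}=\sum_\alpha\kappa_\alpha t^\alpha$ for $t>0$, where $A(t)$ is the product of the elementary factors. By Fomin--Zelevinsky, every totally positive matrix is $D\,A(t)$ with $D$ a positive diagonal matrix. That diagonal factor only multiplies $v(t)$ by the positive scalar $\det D$. Totally positive matrices are dense in $\TNN_n$, so $E$ is the closed convex cone generated by the vectors $v(t)$, $t>0$.

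($\Leftarrow$) Suppose $\alpha$ is a vertex of $\mathcal N$ with $\supp\kappa_\alpha=\{w\}$. Choose a linear functional $c$ that is uniquely maximised on $\mathcal N$ at $\alpha$. Put $t=s^{c}$, meaning $t_i=s^{c_i}$. Then
\[
s^{-\langle c,\alpha\rangle}\,v(s^c)=\kappa_\alpha+\sum_{\beta\ne\alpha}\kappa_\beta\,s^{\langle c,\beta-\alpha\rangle}\longrightarrow\kappa_\alpha\qquad(s\to\infty),
\]
because every other exponent $\beta$ has $\langle c,\beta-\alpha\rangle<0$. Since $E$ is a closed cone, $\kappa_\alpha\in E$. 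Here $\kappa_\alpha$ is a positive multiple of $e_w$, so $e_w\in E$.

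($\Rightarrow$) I would prove the contrapositive with the separation characterisation stated just before the theorem. Assume every vertex $\alpha$ of $\mathcal N$ has some $u\ne w$ in $\supp\kappa_\alpha$. It suffices to build $F=\sum_ua_u\Imm_u\in\CTNN(\Pi_n)$ with $a_w<0$.

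\begin{enumerate}
\item Set $a_w=-1$ and $a_u=M$ for all $u\ne w$, with $M$ to be chosen. In the chart,
\[
F(A(t))=\sum_\alpha\langle a,\kappa_\alpha\rangle t^\alpha .
\]
\item The $\kappa_\alpha(u)$ are non-negative integers. Hence if $\supp\kappa_\alpha\ne\{w\}$, then $\langle a,\kappa_\alpha\rangle\ge M-\kappa_\alpha(w)$. This is positive for large $M$.
\item The only negative coefficients sit at exponents with $\supp\kappa_\alpha=\{w\}$. By hypothesis these are not vertices. Each such $\alpha$ is a convex combination $\sum_i\lambda_i\beta_i$ of vertices $\beta_i$.
\item Weighted AM--GM on $t>0$ gives
\[
t^\alpha=\prod_i\bigl(t^{\beta_i}\bigr)^{\lambda_i}\le\sum_i\lambda_i t^{\beta_i}.
\]
So each negative term is bounded by $\kappa_\alpha(w)\sum_i\lambda_it^{\beta_i}$.
\item There are finitely many exponents and the vertex coefficients grow linearly in $M$. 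So for $M$ large, the vertex terms absorb all negative terms, and the other positive terms are simply dropped. This gives $F(A(t))\ge0$ for all $t>0$.
\item The diagonal factor and density then give $F\ge0$ on $\TNN_n$. Thus $F\in\CTNN(\Pi_n)$ with $a_w=-1<0$, so $w$ is not TNN-isolated.
\end{enumerate}

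The main point needing care is the AM--GM absorption in step~5. One must check that the needed $M$ is uniform, i.e.\ it depends only on the finitely many $\kappa_\alpha$ and the convex weights $\lambda_i$, and not on $t$; this holds. One must also check that the reduction from $\TNN_n$ to the chart is legitimate. For ($\Leftarrow$) this needs only that each $A(s^c)$ is TNN. For ($\Rightarrow$) it needs the closure and density statement.
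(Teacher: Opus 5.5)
Your proof is correct. The direction ($\Leftarrow$) is the paper's argument: the curve $t=s^c$, with $c$ uniquely maximised at $\alpha$, singles out $\kappa_\alpha$ in the limit.

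For ($\Rightarrow$) you take a genuinely different route. The paper argues forwards. It shows that $e_w$ is a limit of normalised evaluation vectors in the fixed chart. It then uses the description of the closure of the positive part of the projective toric variety of $\mathcal A$ as a disjoint union of positive torus orbits indexed by the faces of $\mathcal N$ \cite{Sot}. So the limit is $\kappa y/\|\kappa y\|_1$ with $y$ supported on a face $\mathcal F$, and since $\kappa\ge0$, every $\kappa_\alpha$ with $\alpha\in\mathcal F$ is a multiple of $e_w$.

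You instead argue by contraposition and build the separating element explicitly, $F=-\Imm_w+M\sum_{u\ne w}\Imm_u$. Its chart coefficients are negative only at exponents with $\supp\kappa_\alpha=\{w\}$, which by hypothesis are not vertices. Weighted AM--GM bounds each such $t^\alpha$ by a convex combination of vertex monomials, whose coefficients are at least $M-\kappa_\beta(w)$. The constant $M$ depends only on the finitely many $\kappa_\alpha$ and convex weights. You also need only the trivial half of the separation statement.

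What your route buys is that it avoids toric geometry, compactness and normalisation, and it is constructive. It produces an explicit element of $\CTNN(\Pi_n)$ with $a_w<0$ and $a_u>0$ for every $u\ne w$, whose non-negativity on the chart is certified by AM--GM rather than by a subtraction-free expansion. For the singular $w$ of Theorem~\ref{thm:smalln} this gives such witnesses from the Newton polytope data alone; compare the third open problem in Section~\ref{sec:evidence}.

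What the paper's limit argument buys is a description of the whole closure of the normalised chart evaluation vectors, as the image of the compactified positive toric variety. That description is not tied to coordinate vectors.
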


\begin{proof}
Suppose $\alpha$ is such a vertex, and let $c$ be a linear functional maximised on $\mathcal N$ only at $\alpha$. Put $t=s^c$ and let $s\to\infty$. The evaluation vector, divided by $s^{c\cdot\alpha}$, tends to $\kappa_\alpha$, which is a positive multiple of $e_w$. So $e_w\in E$.

Conversely, suppose $e_w\in E$. Write $\mathrm{ev}(A)=(\Imm_v(A))_{v\in S_n}$ and, when $\mathrm{ev}(A)\ne0$, $u(A)=\mathrm{ev}(A)/\|\mathrm{ev}(A)\|_1$; these normalised vectors generate $E$ as a closed cone. We show in four steps that $e_w$ is a limit of $u(A_k)$ for matrices $A_k$ in the fixed chart, and then read off the vertex.

\emph{Step 1: $e_w$ is a limit of normalised evaluation vectors.} Since $e_w$ spans an extreme ray of the non-negative orthant, which contains $E$, this holds: if $\sum_kc_ku_k\to e_w$ with $c_k\ge0$ and all $u_k\ge0$ of unit $\ell^1$-norm, then $\sum_kc_k\to1$ and $\sum_kc_k(1-u_k(w))\to0$, so some $u_k$ have $u_k(w)\to1$, hence $u_k\to e_w$.

\emph{Step 2: totally positive matrices suffice.} The map $A\mapsto u(A)$ is continuous where $\mathrm{ev}(A)\ne0$, and totally positive matrices are dense in $\TNN_n$. So every $u(A)$, $A\in\TNN_n$, is a limit of $u(A')$ with $A'$ totally positive, and $e_w$ is a limit of $u(A_k)$ with all $A_k$ totally positive.

\emph{Step 3: one fixed chart.} By Fomin--Zelevinsky \cite{FZ}, for the fixed chart every totally positive matrix is $D\,M(t)$, where $M(t)$ is the product of the elementary factors of the chart at positive parameters $t$ and $D$ is a positive diagonal matrix. A diagonal factor in any position can be moved to the left, because $D(I+tE_{ij})=(I+td_id_j^{-1}E_{ij})D$ only rescales a parameter by a positive number. Every $F\in\Pi_n$ satisfies $F(DM)=\det D\cdot F(M)$, so $u(D\,M(t))=u(M(t))=\kappa\tau(t)/\|\kappa\tau(t)\|_1$, where $\mathcal A$ is the set of exponents $\alpha$ with $\kappa_\alpha\ne0$, $\kappa$ is the matrix with columns $\kappa_\alpha$, and $\tau(t)=(t^\alpha)_{\alpha\in\mathcal A}/\|(t^\alpha)_\alpha\|_1$.

\emph{Step 4: the toric closure.} As $t$ ranges over the positive orthant, $\tau(t)$ ranges over the positive part of the projective toric variety of $\mathcal A$. Its closure is the disjoint union, over the faces $\mathcal F$ of $\mathcal N$, of the positive parts of the torus orbits, whose points $y$ satisfy $y_\alpha>0$ for $\alpha\in\mathcal F$ and $y_\alpha=0$ otherwise \cite{Sot}. On this compact set the map $y\mapsto\kappa y/\|\kappa y\|_1$ is continuous, because $\kappa\ge0$ has no zero column, so $\|\kappa y\|_1\ge\min_\alpha\|\kappa_\alpha\|_1>0$ for $y\ge0$ with $\|y\|_1=1$. So the limit $e_w$ equals $\kappa y/\|\kappa y\|_1$ for some $y$ in the closure, supported exactly on some face $\mathcal F$, that is, $e_w$ is a positive multiple of $\sum_{\alpha\in\mathcal F}\kappa_\alpha y_\alpha$ with all $y_\alpha>0$. Since all $\kappa_\alpha\ge0$, every $\kappa_\alpha$ with $\alpha\in\mathcal F$ is supported on $\{w\}$. In particular this holds for a vertex of $\mathcal F$, which is a vertex of $\mathcal N$.
\end{proof}

\begin{theorem}\label{thm:smalln}
\begin{enumerate}
\item For $n\le3$ every $w\in S_n$ is TNN-isolated, so $\CKL(\Pi_n)=\CTNN(\Pi_n)$.
\item For $n=4$ and $n=5$, $w$ is TNN-isolated iff $w$ avoids $3412$ and $4231$. These are $22$ of the $24$ elements of $S_4$ and $88$ of the $120$ elements of $S_5$. In particular $\CKL(\Pi_n)\subsetneq\CTNN(\Pi_n)$ for $n=4,5$.
\end{enumerate}
\end{theorem}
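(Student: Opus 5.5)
The plan is to use Theorem~\ref{thm:isolation} in both directions and to handle smooth and singular $w$ separately. For the positive half, Skandera's construction \cite[Thms.~5.4, 5.5]{Ska08} already gives isolation of every smooth $w$, i.e.\ every $w$ avoiding $3412$ and $4231$. This covers all of $S_n$ for $n\le3$ and the $22$ resp.\ $88$ smooth elements for $n=4,5$. Part (1) then follows from the equivalence $\CKL(\Pi_n)=\CTNN(\Pi_n)$ iff every $w$ is TNN-isolated. To keep the argument self-contained and as a check, I would also verify isolation directly. Fix one chart, for instance the factorisation with lower and upper words both equal to the standard reduced word of $w_0$. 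Expand each $\Imm_w$ in the $t$ parameters, using the Kazhdan--Lusztig polynomials of $S_4,S_5$ together with the definition of $\Imm_w$. Then compute the vertices of $\mathcal N$ and confirm that each smooth $w$ is the sole support of $\kappa_\alpha$ at some vertex.

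The negative half is the real content. It says that for the singular $w$ (namely $3412,4231$ in $S_4$, and the $32$ elements of $S_5$ containing one of these patterns) no vertex of $\mathcal N$ has $\supp\kappa_\alpha=\{w\}$. The criterion is stated for a fixed chart and is an equivalence for any chart, so it suffices to check a single chart. The check is a finite exact computation. I would list the exponents $\alpha$ with $\kappa_\alpha\ne0$ in that chart, there being $\binom{n}{2}$ lower and $\binom n2$ upper parameters, so $20$ variables for $n=5$. Next, determine the vertices of $\mathcal N$ by linear programming in exact rational arithmetic, testing for each exponent whether it lies in the convex hull of the others. Finally, for each singular $w$, confirm that every vertex with $w\in\supp\kappa_\alpha$ also has some other $v$ in its support.

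As a cross-check on the negative direction, I would exhibit an explicit separating $F=\sum_va_v\Imm_v\in\CTNN(\Pi_n)$ with $a_w<0$. Such an $F$ proves non-isolation by the separation statement preceding Theorem~\ref{thm:isolation}. For $n=4$ one would expect, as in \cite{RS06}, something like $\Imm_{3412}$ combined with neighbours such as $\Imm_{4412}$-type covers, whose chart expansion has non-negative coefficients. For $n=5$ I would try to reduce to $n=4$ by pattern embedding, e.g.\ $w\times e_1$. I am not sure this reduction preserves non-isolation, so the direct polytope computation remains the primary argument.

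The main obstacle is the size of the expansion for $n=5$. The number of monomials in $20$ variables can be large, and the vertex enumeration must be exact. I would handle this by computing the monomials of $x^v$ in the chart as path-weight sums through the planar network of Lindström--Gessel--Viennot. I would also reduce the vertex test to one LP per candidate exponent, restricted to exponents whose $\kappa_\alpha$ meets a singular $w$.
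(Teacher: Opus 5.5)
Your plan is correct and is essentially the paper's proof. The paper also applies Theorem~\ref{thm:isolation} in one fixed chart and settles both directions by an exact computation of which exponents $\alpha$ with $\supp\kappa_\alpha=\{w\}$ are vertices of $\mathcal N$. It differs only in details: it proves the positive direction directly, with integer separating functionals, instead of citing \cite{Ska08}, and it certifies non-vertices by writing each such $\alpha$ as the midpoint of two other exponents rather than by LP.

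Do not lean on your secondary cross-check. $4412$ is not a permutation. More importantly, in any chart with an exponent satisfying $\supp\kappa_\alpha=\{w\}$, the coefficient of $t^\alpha$ in a separating $F$ is $a_w\kappa_\alpha(w)<0$, so no subtraction-free chart certificate exists there. The paper's chart has such exponents for both $3412$ and $4231$, and its search found no such certificate in $S_4$.
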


Both directions of part (2) are proved here directly from Theorem~\ref{thm:isolation}, without using \cite{Ska08} or \cite{McN}. For every $n$, ``smooth $\Rightarrow$ isolated'' is due to Skandera \cite{Ska08}, and the companion paper gives a proof of the converse by a different method \cite[Thm.~9.6]{McN}.

\begin{proof}
Apply Theorem~\ref{thm:isolation} to the chart whose lower and upper words are both $s_1\,s_2s_1\,s_3s_2s_1\allowbreak\cdots\allowbreak s_{n-1}\cdots s_1$, all lower factors first, by computing the vertices of $\mathcal N$ that carry a single $\kappa$-support. All certificates are exact and archived (\nolinkurl{results/tnn_isolation_certificates_n4.json} and \nolinkurl{_n5.json}). For each isolated $w$ there is an exponent $\alpha$ with $\supp\kappa_\alpha=\{w\}$ and an integer functional maximised on $\mathcal N$ only at $\alpha$. For each singular $w$, every exponent $\alpha$ with $\supp\kappa_\alpha=\{w\}$ is the midpoint of two other exponents, hence not a vertex ($1$ and $18$ such exponents for $3412$ and $4231$; $4{,}747$ in all for $n=5$, and some singular $w$ have none). Both kinds of certificate are re-checked in integer arithmetic by \texttt{check\_isolation\_certificates.py --verify}.
\end{proof}

\begin{exa}\label{exa:isolation}
For $n=3$ the chart of the proof is $L_1(t_1)L_2(t_2)L_1(t_3)\,U_1(u_1)U_2(u_2)U_1(u_3)$, with $L_i(t)=I+tE_{i+1,i}$ and $U_i(t)=I+tE_{i,i+1}$. The six $\Imm_w$ have $14$ distinct exponents in total. For each $w$ the table gives a monomial $t^\alpha$ that occurs in $\Imm_w$ with coefficient $1$ and in no other $\Imm_v$, and a functional $c$ with $c\cdot\alpha>c\cdot\beta$ for the other $13$ exponents $\beta$. So $\alpha$ is a vertex of $\mathcal N$ with $\kappa_\alpha=e_w$, and Theorem~\ref{thm:isolation} shows that every $w\in S_3$ is TNN-isolated.
\begin{center}
\small
\begin{tabular}{ccc}
\toprule
$w$ & $t^\alpha$ & $c$ on $(t_1,t_2,t_3,u_1,u_2,u_3)$\\
\midrule
$123$ & $1$ & $(-1,-1,-1,-1,-1,-1)$\\
$132$ & $t_2u_2$ & $(-1,1,-1,-1,1,-1)$\\
$213$ & $t_3u_3$ & $(-1,-1,1,-1,-1,1)$\\
$231$ & $t_2t_3u_2u_3$ & $(-1,1,0,-1,1,1)$\\
$312$ & $t_1t_2u_1u_2$ & $(1,1,-1,0,1,-1)$\\
$321$ & $t_2t_3^2u_1u_2u_3$ & $(-1,1,1,0,1,1)$\\
\bottomrule
\end{tabular}
\end{center}
\end{exa}

Explicit elements of $\CTNN(\Pi_5)\setminus\CKL(\Pi_5)$ are given in \cite[Thm.~9.2]{McN}, for example $\Imm_{21543}+\Imm_{54321}-\Imm_{52431}$. For $n=4$, Skandera and Zelevinsky give $\Imm_{3214}+\Imm_{1432}-\Imm_{3412}\in\CTNN(\Pi_4)\setminus\CKL(\Pi_4)$ (reported in \cite[(5.1)]{Ska08}), certified by a subtraction-free expression in minors. The stronger element $\tfrac12(\Imm_{1432}+\Imm_{3214})-\Imm_{3412}$ is non-negative on $\TNN_4$ by \cite[Prop.~9.3]{McN}. Lifting it, and the elements $\varepsilon\Imm_{2143}+\tfrac3{4\varepsilon}\Imm_{4321}-\Imm_{4231}$ for small $\varepsilon>0$, along an occurrence of $3412$ or $4231$ gives a proof for every $n$ that every TNN-isolated $w$ is smooth \cite[Thm.~9.6]{McN}, the converse of Skandera's theorem. Theorem~\ref{thm:smalln} confirms this for $n\le5$ by a different method.

\section{Further computations}\label{sec:evidence}

This section records computations about results that are proved, or conjectured, elsewhere, mostly in the companion preprint \cite{McN}. They are cross-checks of those proofs and evidence for those conjectures, not replacements for them, and nothing in Sections~\ref{sec:intro}--\ref{sec:beyond} depends on them.

\subsection{Clean steps}
A step $w\to ws$ with $sw<w<ws$ is \emph{clean} if no $z<w$ with $zs<z$ has $\mu(z,w)\ne0$; then $c_{\cdot,ws}=2c_{\cdot,w}$ \cite[\S7.2]{McN}. The steps certified by the bad-cover criterion of \cite[Prop.~7.8]{McN}, starting from smooth permutations, reach $84\%$, $68\%$, $49\%$, $32\%$ and $19\%$ of the singular permutations for $n=5,\dots,9$; all clean steps together reach $84\%$, $68\%$, $51\%$, $35\%$ and $22\%$. The singular $w$ that are not the top of a clean step are a stable $30\%$ of all singular $w$ for $n=7,8,9$.

\subsection{Checks of the results for all $n$}
The results of \cite{McN} were also checked by computer. The product theorem \cite[Thm.~3.1]{McN} holds for all $I,T,u,v$ with $n\le6$, and its single-entry case for $n\le7$. The identity $\ch D_k=\sum_jh_jm_{(k-j,1^{n-k})}$ of Skandera's hook chain holds for $n\le12$, and $D_k$ and every term have non-negative Kazhdan--Lusztig coefficients for $n\le8$. For the hook theorem, the closure identity was checked for $m\le7$, the formulas for the rotations and for $H_c(e)$ for $m\le9$, the boundary signs for $m\le10$, and the whole recursion numerically for $m\le7$. The diagram structure in the bottom-terms lemma was checked exactly from the Kazhdan--Lusztig polynomials for $m\le6$, without the colouring rule.

For the isolation theorem \cite[Thm.~9.6]{McN} we used the exact values $P_{\sigma,w}(1)$ from the extremal-pair tables. For each singular $w$ with $n\le10$ (there are $3{,}504{,}490$ for $n=10$), take the first occurrence of $3412$ in lexicographic order, or of $4231$ if there is none. Lift $\Imm_{1432}+\Imm_{3214}-2\Imm_{3412}$, respectively $\Imm_{2143}+\Imm_{4321}-\Imm_{4231}$, along it as in the proof. In every case the coefficient of $\Imm_w$ is negative. The proof itself needs no choice of occurrence. Its smoothness lemma \cite[Lemma~9.5]{McN} says that $X_w$ is smooth at $w(i\,j)$ iff the entries of $w$ inside the rectangle spanned by positions $i,j$ decrease, and that then $P_{w(i\,j),w}=1$. We checked that $P_{w(i\,j),w}=1$ iff these entries decrease, for every inversion of every $w$ with $n\le10$ ($81{,}648{,}000$ inversions for $n=10$).

\subsection{Hook expansions}
Let $B_{ij}(w)$ be the coefficient of $\Imm_w$ in $x_{ij}x_{ji}\det x[\{i,j\}^c,\{i,j\}^c]$. The conjectural closed form for $B_{ij}$ in \cite[Conj.~10.1]{McN} matches the computation for every pair $(i,j)$ with $n\le8$, and the inertness property stated after it holds for every $2$- and $3$-cycle with $n\le8$. For $a\ge2$, $f^{(a)}_i(\Cp w)=0$ unless $w$ moves a single block of consecutive letters containing $i$, for $n\le8$. The functions $g_T$ in the decomposition of Skandera's $\theta_\ell$ are KL-positive for $n\le8$ (\nolinkurl{results/theta_per_set_n8.log}).

\subsection{The shapes $(3,2^b,1^c)$}
For these shapes Conjecture~\ref{conj:H} reduces to $321$-avoiding $w$ containing both $312$ and $231$ \cite[Props.~7.2, 8.2]{McN}. There are $40$, $130$ and $427$ indecomposable such $w$ for $n=6,7,8$, for example for $(3,2,1)$, $(3,2,1,1)$ and $(3,2,2,1)$, and Theorem~\ref{thm:main} covers $3+2b+c\le10$.

\subsection*{Open problems}
\begin{enumerate}
\item \emph{Scalable and certifiable computation.} The extremal-pair method applies unchanged to $n=11$, but $S_{11}$ has an estimated $5\times10^{10}$ extremal pairs; extrapolating from $n=9,10$, the graded step needs a few times $10^4$ core-hours and several terabytes of working memory. Find an algorithm that is sharded by descent classes and streamed to disk, and that emits certificates which can be checked independently and more cheaply than the computation itself, so that a result for $n=11$ would not rest on a single run.
\item \emph{The Kostka order.} The criterion $K_{a,\mu}/f^a\ge K_{b,\mu}/f^b$ for all $\mu\vdash n$ of Theorem~\ref{thm:order} defines a partial order on partitions of every $n$, contained in dominance order; it agrees with $\succeq$ on $\TNN_n$ for $n\le10$, and its failures are failures of $\succeq$ for every $n$. Describe this order combinatorially: which pairs comparable in dominance are incomparable in it, and what obstructs the lattice property (Theorem~\ref{thm:structure}).
\item \emph{Chart certificates.} For each singular $w$, an element of $\CTNN(\Pi_n)$ with a negative coefficient at $\Imm_w$ and non-negative coefficients elsewhere exists by the proof given in \cite[Thm.~9.6]{McN}. Find such elements whose expansion in a single chart of Theorem~\ref{thm:isolation} is subtraction-free. Our search found them for only $3$ of the $32$ singular $w\in S_5$ in $200$ random charts, and for neither singular $w\in S_4$ in $30$ charts (\nolinkurl{results/tnn_isolation_n4.json}, \nolinkurl{_n5.json}). Decide whether they exist for $3412$ and $4231$, and whether they can be constructed uniformly, for example by lifting along pattern occurrences. Relatedly, find compact, human-readable Newton-polytope certificates for Theorem~\ref{thm:smalln}, and use them to give an independent check of the isolation theorem for $n=6$.
\item \emph{The graded cones.} For $n\le10$, $\CJ(q)\subseteq\CKL(q)\subseteq\CJ^{\mathrm{ev}}(q)$, the first inclusion strict for $n=2$ and the second for every $n\ge3$ (Section~\ref{sec:consequences}). Describe $\CKL(q)$, for example by finding its generators as an $\N[q]$-semimodule for $n=3,4$, and determine which traces in $\CJ^{\mathrm{ev}}(q)\setminus\CJ(q)$ lie in it.
\end{enumerate}

\clearpage
\appendix

\section{Hasse diagrams for \texorpdfstring{$n=8,9,10$}{n=8,9,10}}\label{app:hasse}
\raggedbottom

\begin{figure}[!h]
\centering
\resizebox{0.44\textwidth}{!}{\begin{tikzpicture}[every node/.style={font=\scriptsize,inner sep=1.5pt}]
  \node (p8) at (0.00,12.10) {$8$};
  \node (p7_1) at (0.00,11.00) {$7\,1$};
  \node (p6_2) at (0.00,9.90) {$6\,2$};
  \node (p6_1_1) at (-0.62,8.80) {$6\,1^{2}$};
  \node (p5_3) at (0.62,8.80) {$5\,3$};
  \node (p5_2_1) at (-0.62,7.70) {$5\,2\,1$};
  \node (p5_1_1_1) at (-2.50,4.40) {$5\,1^{3}$};
  \node (p4_4) at (0.62,7.70) {$4^{2}$};
  \node (p4_3_1) at (0.00,6.60) {$4\,3\,1$};
  \node (p4_2_2) at (-0.62,5.50) {$4\,2^{2}$};
  \node (p4_2_1_1) at (-1.25,4.40) {$4\,2\,1^{2}$};
  \node (p4_1_1_1_1) at (-1.25,3.30) {$4\,1^{4}$};
  \node (p3_3_2) at (0.62,5.50) {$3^{2}\,2$};
  \node (p3_3_1_1) at (0.00,4.40) {$3^{2}\,1^{2}$};
  \node (p3_2_2_1) at (1.25,4.40) {$3\,2^{2}\,1$};
  \node (p3_2_1_1_1) at (0.00,3.30) {$3\,2\,1^{3}$};
  \node (p3_1_1_1_1_1) at (-0.62,2.20) {$3\,1^{5}$};
  \node (p2_2_2_2) at (2.50,4.40) {$2^{4}$};
  \node (p2_2_2_1_1) at (1.25,3.30) {$2^{3}\,1^{2}$};
  \node (p2_2_1_1_1_1) at (0.62,2.20) {$2^{2}\,1^{4}$};
  \node (p2_1_1_1_1_1_1) at (0.00,1.10) {$2\,1^{6}$};
  \node (p1_1_1_1_1_1_1_1) at (0.00,0.00) {$1^{8}$};
  \draw[gray] (p2_1_1_1_1_1_1) -- (p1_1_1_1_1_1_1_1);
  \draw[gray] (p2_2_1_1_1_1) -- (p2_1_1_1_1_1_1);
  \draw[gray] (p2_2_2_1_1) -- (p2_2_1_1_1_1);
  \draw[gray] (p2_2_2_2) -- (p2_2_2_1_1);
  \draw[gray] (p3_1_1_1_1_1) -- (p2_1_1_1_1_1_1);
  \draw[gray] (p3_2_1_1_1) -- (p2_2_1_1_1_1);
  \draw[gray] (p3_2_1_1_1) -- (p3_1_1_1_1_1);
  \draw[gray] (p3_2_2_1) -- (p2_2_2_1_1);
  \draw[gray] (p3_2_2_1) -- (p3_2_1_1_1);
  \draw[gray] (p3_3_1_1) -- (p2_2_2_1_1);
  \draw[gray] (p3_3_1_1) -- (p3_2_1_1_1);
  \draw[gray] (p3_3_2) -- (p2_2_2_2);
  \draw[gray] (p3_3_2) -- (p3_2_2_1);
  \draw[gray] (p3_3_2) -- (p3_3_1_1);
  \draw[gray] (p4_1_1_1_1) -- (p2_2_1_1_1_1);
  \draw[gray] (p4_1_1_1_1) -- (p3_1_1_1_1_1);
  \draw[gray] (p4_2_1_1) -- (p2_2_2_1_1);
  \draw[gray] (p4_2_1_1) -- (p3_2_1_1_1);
  \draw[gray] (p4_2_1_1) -- (p4_1_1_1_1);
  \draw[gray] (p4_2_2) -- (p2_2_2_2);
  \draw[gray] (p4_2_2) -- (p3_2_2_1);
  \draw[gray] (p4_2_2) -- (p3_3_1_1);
  \draw[gray] (p4_2_2) -- (p4_2_1_1);
  \draw[gray] (p4_3_1) -- (p3_3_2);
  \draw[gray] (p4_3_1) -- (p4_2_1_1);
  \draw[gray] (p4_4) -- (p4_2_2);
  \draw[gray] (p4_4) -- (p4_3_1);
  \draw[gray] (p5_1_1_1) -- (p2_2_2_1_1);
  \draw[gray] (p5_1_1_1) -- (p3_2_1_1_1);
  \draw[gray] (p5_1_1_1) -- (p4_1_1_1_1);
  \draw[gray] (p5_2_1) -- (p4_2_2);
  \draw[gray] (p5_2_1) -- (p4_3_1);
  \draw[gray] (p5_2_1) -- (p5_1_1_1);
  \draw[gray] (p5_3) -- (p5_2_1);
  \draw[gray] (p6_1_1) -- (p5_2_1);
  \draw[gray] (p6_2) -- (p5_3);
  \draw[gray] (p6_2) -- (p6_1_1);
  \draw[gray] (p7_1) -- (p4_4);
  \draw[gray] (p7_1) -- (p6_2);
  \draw[gray] (p8) -- (p7_1);
\end{tikzpicture}}\hfill
\resizebox{!}{0.78\textheight}{\begin{tikzpicture}[every node/.style={font=\scriptsize,inner sep=1.5pt}]
  \node (p9) at (0.00,15.40) {$9$};
  \node (p8_1) at (0.00,14.30) {$8\,1$};
  \node (p7_2) at (0.00,13.20) {$7\,2$};
  \node (p7_1_1) at (-0.62,12.10) {$7\,1^{2}$};
  \node (p6_3) at (0.62,12.10) {$6\,3$};
  \node (p6_2_1) at (-0.62,11.00) {$6\,2\,1$};
  \node (p6_1_1_1) at (-1.88,9.90) {$6\,1^{3}$};
  \node (p5_4) at (0.62,11.00) {$5\,4$};
  \node (p5_3_1) at (-0.62,9.90) {$5\,3\,1$};
  \node (p5_2_2) at (0.62,9.90) {$5\,2^{2}$};
  \node (p5_2_1_1) at (-0.62,8.80) {$5\,2\,1^{2}$};
  \node (p5_1_1_1_1) at (-2.50,4.40) {$5\,1^{4}$};
  \node (p4_4_1) at (1.88,9.90) {$4^{2}\,1$};
  \node (p4_3_2) at (0.62,8.80) {$4\,3\,2$};
  \node (p4_3_1_1) at (-0.62,7.70) {$4\,3\,1^{2}$};
  \node (p4_2_2_1) at (-0.62,6.60) {$4\,2^{2}\,1$};
  \node (p4_2_1_1_1) at (-1.25,4.40) {$4\,2\,1^{3}$};
  \node (p4_1_1_1_1_1) at (-1.25,3.30) {$4\,1^{5}$};
  \node (p3_3_3) at (0.62,7.70) {$3^{3}$};
  \node (p3_3_2_1) at (0.62,6.60) {$3^{2}\,2\,1$};
  \node (p3_3_1_1_1) at (0.00,4.40) {$3^{2}\,1^{3}$};
  \node (p3_2_2_2) at (0.00,5.50) {$3\,2^{3}$};
  \node (p3_2_2_1_1) at (1.25,4.40) {$3\,2^{2}\,1^{2}$};
  \node (p3_2_1_1_1_1) at (0.00,3.30) {$3\,2\,1^{4}$};
  \node (p3_1_1_1_1_1_1) at (-0.62,2.20) {$3\,1^{6}$};
  \node (p2_2_2_2_1) at (2.50,4.40) {$2^{4}\,1$};
  \node (p2_2_2_1_1_1) at (1.25,3.30) {$2^{3}\,1^{3}$};
  \node (p2_2_1_1_1_1_1) at (0.62,2.20) {$2^{2}\,1^{5}$};
  \node (p2_1_1_1_1_1_1_1) at (0.00,1.10) {$2\,1^{7}$};
  \node (p1_1_1_1_1_1_1_1_1) at (0.00,0.00) {$1^{9}$};
  \draw[gray] (p2_1_1_1_1_1_1_1) -- (p1_1_1_1_1_1_1_1_1);
  \draw[gray] (p2_2_1_1_1_1_1) -- (p2_1_1_1_1_1_1_1);
  \draw[gray] (p2_2_2_1_1_1) -- (p2_2_1_1_1_1_1);
  \draw[gray] (p2_2_2_2_1) -- (p2_2_2_1_1_1);
  \draw[gray] (p3_1_1_1_1_1_1) -- (p2_1_1_1_1_1_1_1);
  \draw[gray] (p3_2_1_1_1_1) -- (p2_2_1_1_1_1_1);
  \draw[gray] (p3_2_1_1_1_1) -- (p3_1_1_1_1_1_1);
  \draw[gray] (p3_2_2_1_1) -- (p2_2_2_1_1_1);
  \draw[gray] (p3_2_2_1_1) -- (p3_2_1_1_1_1);
  \draw[gray] (p3_2_2_2) -- (p2_2_2_2_1);
  \draw[gray] (p3_2_2_2) -- (p3_2_2_1_1);
  \draw[gray] (p3_3_1_1_1) -- (p2_2_2_1_1_1);
  \draw[gray] (p3_3_1_1_1) -- (p3_2_1_1_1_1);
  \draw[gray] (p3_3_2_1) -- (p3_2_2_2);
  \draw[gray] (p3_3_2_1) -- (p3_3_1_1_1);
  \draw[gray] (p3_3_3) -- (p3_3_2_1);
  \draw[gray] (p4_1_1_1_1_1) -- (p2_2_1_1_1_1_1);
  \draw[gray] (p4_1_1_1_1_1) -- (p3_1_1_1_1_1_1);
  \draw[gray] (p4_2_1_1_1) -- (p2_2_2_1_1_1);
  \draw[gray] (p4_2_1_1_1) -- (p3_2_1_1_1_1);
  \draw[gray] (p4_2_1_1_1) -- (p4_1_1_1_1_1);
  \draw[gray] (p4_2_2_1) -- (p3_2_2_2);
  \draw[gray] (p4_2_2_1) -- (p3_3_1_1_1);
  \draw[gray] (p4_2_2_1) -- (p4_2_1_1_1);
  \draw[gray] (p4_3_1_1) -- (p3_3_2_1);
  \draw[gray] (p4_3_1_1) -- (p4_2_2_1);
  \draw[gray] (p4_3_2) -- (p4_3_1_1);
  \draw[gray] (p4_4_1) -- (p4_3_2);
  \draw[gray] (p5_1_1_1_1) -- (p2_2_2_1_1_1);
  \draw[gray] (p5_1_1_1_1) -- (p3_2_1_1_1_1);
  \draw[gray] (p5_1_1_1_1) -- (p4_1_1_1_1_1);
  \draw[gray] (p5_2_1_1) -- (p4_3_1_1);
  \draw[gray] (p5_2_1_1) -- (p5_1_1_1_1);
  \draw[gray] (p5_2_2) -- (p5_2_1_1);
  \draw[gray] (p5_3_1) -- (p4_3_2);
  \draw[gray] (p5_3_1) -- (p5_2_1_1);
  \draw[gray] (p5_4) -- (p3_3_3);
  \draw[gray] (p5_4) -- (p4_4_1);
  \draw[gray] (p5_4) -- (p5_2_2);
  \draw[gray] (p5_4) -- (p5_3_1);
  \draw[gray] (p6_1_1_1) -- (p5_2_1_1);
  \draw[gray] (p6_2_1) -- (p5_2_2);
  \draw[gray] (p6_2_1) -- (p5_3_1);
  \draw[gray] (p6_2_1) -- (p6_1_1_1);
  \draw[gray] (p6_3) -- (p3_3_3);
  \draw[gray] (p6_3) -- (p4_4_1);
  \draw[gray] (p6_3) -- (p6_2_1);
  \draw[gray] (p7_1_1) -- (p3_3_3);
  \draw[gray] (p7_1_1) -- (p4_4_1);
  \draw[gray] (p7_1_1) -- (p6_2_1);
  \draw[gray] (p7_2) -- (p5_4);
  \draw[gray] (p7_2) -- (p6_3);
  \draw[gray] (p7_2) -- (p7_1_1);
  \draw[gray] (p8_1) -- (p7_2);
  \draw[gray] (p9) -- (p8_1);
\end{tikzpicture}}
\caption{The order $\succeq$ of normalised immanants on $\TNN_8$ (left) and $\TNN_9$ (right).}\label{fig:hasse89}
\end{figure}

\begin{figure}[!ht]
\centering
\resizebox{!}{0.8\textheight}{\begin{tikzpicture}[every node/.style={font=\scriptsize,inner sep=1.5pt}]
  \node (p10) at (0.00,16.50) {$10$};
  \node (p9_1) at (0.00,15.40) {$9\,1$};
  \node (p8_2) at (0.00,14.30) {$8\,2$};
  \node (p8_1_1) at (-0.62,13.20) {$8\,1^{2}$};
  \node (p7_3) at (0.62,13.20) {$7\,3$};
  \node (p7_2_1) at (-1.25,12.10) {$7\,2\,1$};
  \node (p7_1_1_1) at (-1.88,11.00) {$7\,1^{3}$};
  \node (p6_4) at (0.00,12.10) {$6\,4$};
  \node (p6_3_1) at (-0.62,11.00) {$6\,3\,1$};
  \node (p6_2_2) at (0.62,11.00) {$6\,2^{2}$};
  \node (p6_2_1_1) at (-1.25,9.90) {$6\,2\,1^{2}$};
  \node (p6_1_1_1_1) at (-3.75,5.50) {$6\,1^{4}$};
  \node (p5_5) at (1.25,12.10) {$5^{2}$};
  \node (p5_4_1) at (1.88,11.00) {$5\,4\,1$};
  \node (p5_3_2) at (0.00,9.90) {$5\,3\,2$};
  \node (p5_3_1_1) at (-1.25,8.80) {$5\,3\,1^{2}$};
  \node (p5_2_2_1) at (-1.25,7.70) {$5\,2^{2}\,1$};
  \node (p5_2_1_1_1) at (-2.50,5.50) {$5\,2\,1^{3}$};
  \node (p5_1_1_1_1_1) at (-2.50,4.40) {$5\,1^{5}$};
  \node (p4_4_2) at (1.25,9.90) {$4^{2}\,2$};
  \node (p4_4_1_1) at (0.00,8.80) {$4^{2}\,1^{2}$};
  \node (p4_3_3) at (1.25,8.80) {$4\,3^{2}$};
  \node (p4_3_2_1) at (0.00,7.70) {$4\,3\,2\,1$};
  \node (p4_3_1_1_1) at (-1.25,5.50) {$4\,3\,1^{3}$};
  \node (p4_2_2_2) at (-0.62,6.60) {$4\,2^{3}$};
  \node (p4_2_2_1_1) at (0.00,5.50) {$4\,2^{2}\,1^{2}$};
  \node (p4_2_1_1_1_1) at (-1.25,4.40) {$4\,2\,1^{4}$};
  \node (p4_1_1_1_1_1_1) at (-1.25,3.30) {$4\,1^{6}$};
  \node (p3_3_3_1) at (1.25,7.70) {$3^{3}\,1$};
  \node (p3_3_2_2) at (0.62,6.60) {$3^{2}\,2^{2}$};
  \node (p3_3_2_1_1) at (1.25,5.50) {$3^{2}\,2\,1^{2}$};
  \node (p3_3_1_1_1_1) at (0.00,4.40) {$3^{2}\,1^{4}$};
  \node (p3_2_2_2_1) at (2.50,5.50) {$3\,2^{3}\,1$};
  \node (p3_2_2_1_1_1) at (1.25,4.40) {$3\,2^{2}\,1^{3}$};
  \node (p3_2_1_1_1_1_1) at (0.00,3.30) {$3\,2\,1^{5}$};
  \node (p3_1_1_1_1_1_1_1) at (-0.62,2.20) {$3\,1^{7}$};
  \node (p2_2_2_2_2) at (3.75,5.50) {$2^{5}$};
  \node (p2_2_2_2_1_1) at (2.50,4.40) {$2^{4}\,1^{2}$};
  \node (p2_2_2_1_1_1_1) at (1.25,3.30) {$2^{3}\,1^{4}$};
  \node (p2_2_1_1_1_1_1_1) at (0.62,2.20) {$2^{2}\,1^{6}$};
  \node (p2_1_1_1_1_1_1_1_1) at (0.00,1.10) {$2\,1^{8}$};
  \node (p1_1_1_1_1_1_1_1_1_1) at (0.00,0.00) {$1^{10}$};
  \draw[gray] (p2_1_1_1_1_1_1_1_1) -- (p1_1_1_1_1_1_1_1_1_1);
  \draw[gray] (p2_2_1_1_1_1_1_1) -- (p2_1_1_1_1_1_1_1_1);
  \draw[gray] (p2_2_2_1_1_1_1) -- (p2_2_1_1_1_1_1_1);
  \draw[gray] (p2_2_2_2_1_1) -- (p2_2_2_1_1_1_1);
  \draw[gray] (p2_2_2_2_2) -- (p2_2_2_2_1_1);
  \draw[gray] (p3_1_1_1_1_1_1_1) -- (p2_1_1_1_1_1_1_1_1);
  \draw[gray] (p3_2_1_1_1_1_1) -- (p2_2_1_1_1_1_1_1);
  \draw[gray] (p3_2_1_1_1_1_1) -- (p3_1_1_1_1_1_1_1);
  \draw[gray] (p3_2_2_1_1_1) -- (p2_2_2_1_1_1_1);
  \draw[gray] (p3_2_2_1_1_1) -- (p3_2_1_1_1_1_1);
  \draw[gray] (p3_2_2_2_1) -- (p2_2_2_2_1_1);
  \draw[gray] (p3_2_2_2_1) -- (p3_2_2_1_1_1);
  \draw[gray] (p3_3_1_1_1_1) -- (p2_2_2_1_1_1_1);
  \draw[gray] (p3_3_1_1_1_1) -- (p3_2_1_1_1_1_1);
  \draw[gray] (p3_3_2_1_1) -- (p2_2_2_2_1_1);
  \draw[gray] (p3_3_2_1_1) -- (p3_2_2_1_1_1);
  \draw[gray] (p3_3_2_1_1) -- (p3_3_1_1_1_1);
  \draw[gray] (p3_3_2_2) -- (p2_2_2_2_2);
  \draw[gray] (p3_3_2_2) -- (p3_2_2_2_1);
  \draw[gray] (p3_3_2_2) -- (p3_3_2_1_1);
  \draw[gray] (p3_3_3_1) -- (p3_3_2_2);
  \draw[gray] (p4_1_1_1_1_1_1) -- (p2_2_1_1_1_1_1_1);
  \draw[gray] (p4_1_1_1_1_1_1) -- (p3_1_1_1_1_1_1_1);
  \draw[gray] (p4_2_1_1_1_1) -- (p2_2_2_1_1_1_1);
  \draw[gray] (p4_2_1_1_1_1) -- (p3_2_1_1_1_1_1);
  \draw[gray] (p4_2_1_1_1_1) -- (p4_1_1_1_1_1_1);
  \draw[gray] (p4_2_2_1_1) -- (p2_2_2_2_1_1);
  \draw[gray] (p4_2_2_1_1) -- (p3_2_2_1_1_1);
  \draw[gray] (p4_2_2_1_1) -- (p3_3_1_1_1_1);
  \draw[gray] (p4_2_2_1_1) -- (p4_2_1_1_1_1);
  \draw[gray] (p4_2_2_2) -- (p2_2_2_2_2);
  \draw[gray] (p4_2_2_2) -- (p3_2_2_2_1);
  \draw[gray] (p4_2_2_2) -- (p3_3_2_1_1);
  \draw[gray] (p4_2_2_2) -- (p4_2_2_1_1);
  \draw[gray] (p4_3_1_1_1) -- (p2_2_2_2_1_1);
  \draw[gray] (p4_3_1_1_1) -- (p3_2_2_1_1_1);
  \draw[gray] (p4_3_1_1_1) -- (p3_3_1_1_1_1);
  \draw[gray] (p4_3_1_1_1) -- (p4_2_1_1_1_1);
  \draw[gray] (p4_3_2_1) -- (p3_3_2_2);
  \draw[gray] (p4_3_2_1) -- (p4_2_2_1_1);
  \draw[gray] (p4_3_2_1) -- (p4_3_1_1_1);
  \draw[gray] (p4_3_3) -- (p3_3_3_1);
  \draw[gray] (p4_3_3) -- (p4_2_2_2);
  \draw[gray] (p4_3_3) -- (p4_3_2_1);
  \draw[gray] (p4_4_1_1) -- (p4_2_2_2);
  \draw[gray] (p4_4_1_1) -- (p4_3_2_1);
  \draw[gray] (p4_4_2) -- (p3_3_3_1);
  \draw[gray] (p4_4_2) -- (p4_4_1_1);
  \draw[gray] (p5_1_1_1_1_1) -- (p2_2_2_1_1_1_1);
  \draw[gray] (p5_1_1_1_1_1) -- (p3_2_1_1_1_1_1);
  \draw[gray] (p5_1_1_1_1_1) -- (p4_1_1_1_1_1_1);
  \draw[gray] (p5_2_1_1_1) -- (p2_2_2_2_1_1);
  \draw[gray] (p5_2_1_1_1) -- (p3_2_2_1_1_1);
  \draw[gray] (p5_2_1_1_1) -- (p3_3_1_1_1_1);
  \draw[gray] (p5_2_1_1_1) -- (p4_2_1_1_1_1);
  \draw[gray] (p5_2_1_1_1) -- (p5_1_1_1_1_1);
  \draw[gray] (p5_2_2_1) -- (p3_3_2_2);
  \draw[gray] (p5_2_2_1) -- (p4_2_2_2);
  \draw[gray] (p5_2_2_1) -- (p4_3_1_1_1);
  \draw[gray] (p5_2_2_1) -- (p5_2_1_1_1);
  \draw[gray] (p5_3_1_1) -- (p3_3_3_1);
  \draw[gray] (p5_3_1_1) -- (p4_2_2_2);
  \draw[gray] (p5_3_1_1) -- (p4_3_2_1);
  \draw[gray] (p5_3_1_1) -- (p5_2_1_1_1);
  \draw[gray] (p5_3_2) -- (p5_2_2_1);
  \draw[gray] (p5_3_2) -- (p5_3_1_1);
  \draw[gray] (p5_4_1) -- (p4_3_3);
  \draw[gray] (p5_4_1) -- (p4_4_2);
  \draw[gray] (p5_4_1) -- (p5_3_2);
  \draw[gray] (p5_5) -- (p5_4_1);
  \draw[gray] (p6_1_1_1_1) -- (p2_2_2_2_1_1);
  \draw[gray] (p6_1_1_1_1) -- (p3_2_2_1_1_1);
  \draw[gray] (p6_1_1_1_1) -- (p3_3_1_1_1_1);
  \draw[gray] (p6_1_1_1_1) -- (p4_2_1_1_1_1);
  \draw[gray] (p6_1_1_1_1) -- (p5_1_1_1_1_1);
  \draw[gray] (p6_2_1_1) -- (p5_2_2_1);
  \draw[gray] (p6_2_1_1) -- (p5_3_1_1);
  \draw[gray] (p6_2_1_1) -- (p6_1_1_1_1);
  \draw[gray] (p6_2_2) -- (p4_4_2);
  \draw[gray] (p6_2_2) -- (p5_3_2);
  \draw[gray] (p6_2_2) -- (p6_2_1_1);
  \draw[gray] (p6_3_1) -- (p4_3_3);
  \draw[gray] (p6_3_1) -- (p4_4_2);
  \draw[gray] (p6_3_1) -- (p5_3_2);
  \draw[gray] (p6_3_1) -- (p6_2_1_1);
  \draw[gray] (p6_4) -- (p5_4_1);
  \draw[gray] (p6_4) -- (p6_2_2);
  \draw[gray] (p6_4) -- (p6_3_1);
  \draw[gray] (p7_1_1_1) -- (p4_4_1_1);
  \draw[gray] (p7_1_1_1) -- (p6_2_1_1);
  \draw[gray] (p7_2_1) -- (p5_4_1);
  \draw[gray] (p7_2_1) -- (p6_2_2);
  \draw[gray] (p7_2_1) -- (p6_3_1);
  \draw[gray] (p7_2_1) -- (p7_1_1_1);
  \draw[gray] (p7_3) -- (p6_4);
  \draw[gray] (p7_3) -- (p7_2_1);
  \draw[gray] (p8_1_1) -- (p7_2_1);
  \draw[gray] (p8_2) -- (p5_5);
  \draw[gray] (p8_2) -- (p7_3);
  \draw[gray] (p8_2) -- (p8_1_1);
  \draw[gray] (p9_1) -- (p8_2);
  \draw[gray] (p10) -- (p9_1);
\end{tikzpicture}}
\caption{The order $\succeq$ of normalised immanants on $\TNN_{10}$.}\label{fig:hasse10}
\end{figure}

\clearpage

\section{Code and data}\label{app:data}

{\raggedright The code and data are in the repository \url{https://github.com/keithmcnulty/haiman-immanants-papers-n10-hooks}; the version used for this paper is the release tagged \texttt{v1.4.0}, archived as \cite{McNcode}. The computation of Section~\ref{sec:checks} and its results are unchanged since the first release, \texttt{v1.0.0} (\href{https://doi.org/10.5281/zenodo.23435088}{doi:10.5281/zenodo.23435088}); \texttt{v1.1.0} adds the checks of the rooted refinement \cite[Conj.~10.2]{McN}, \texttt{v1.2.0} the $n=10$ part of check~12, \texttt{v1.3.0} its test through all $42$ irreducible characters, and \texttt{v1.4.0} the check of the $g_T$ for $n=8$ and the present version of the text. Later releases make only documentation and packaging changes and change no result. All computations used Python with NumPy, Numba, SciPy and python-flint, together with \textsf{Coxeter3} \cite{Cox3} and SageMath for cross-checks. The repository contains:
\begin{itemize}
\item the extremal-pair engine \texttt{monimm/kl\_extremal.py} and its driver \texttt{run\_kl\_extremal.py}, with the result summary \texttt{results/ext\_n10.json};
\item the graded engine \texttt{monimm/kl\_graded.py} and its driver \texttt{run\_kl\_graded.py}, with results \texttt{results/kl\_graded\_n8.json}, \texttt{results/kl\_graded\_n9.json}, \texttt{results/kl\_graded\_n10.json} (cloud runbook \texttt{cloud/run\_kl\_graded\_n10.sh}), the induced-module test \texttt{tests/test\_kl\_graded.py}, and the chromatic check \texttt{check\_codominant\_chromatic.py} with \texttt{results/kl\_graded\_n10\_codominant.json};
\item the induced-module checks \texttt{check\_graded\_induced.py} ($n=6$, \texttt{results/graded\_induced\_n6.json}) and \texttt{check\_induced\_traces.py} with the depth-first kernel \texttt{monimm/induced\_traces.py} (\texttt{results/induced\_traces\_n8.json}, \texttt{results/induced\_traces\_n10.json}); at $n=10$ it reads the full graded table and the traces written by \texttt{cloud/run\_kl\_graded\_n10\_tables.sh} ($2.5$\,GB compressed), which are archived with the data record \cite{McNdata};
\item the seminormal-form check \texttt{check\_seminormal\_traces.py} with \texttt{monimm/seminormal\_traces.py} (\texttt{results/seminormal\_traces\_n8.json}, \texttt{results/seminormal\_traces\_n10.json}; cloud runbook \texttt{cloud/run\_seminormal\_n10.sh}, $29$ minutes on $192$ cores), which writes the table of all $\chi^\mu_q(T_v)$ mod $p$ at $q=2,3$, archived with the data record from version $2.1.0$ \cite{McNdata};
\item the exact inversion check \texttt{check\_inversion\_exact.py}, with results \texttt{results/inversion\_exact\_n9.json} and \texttt{results/inversion\_exact\_n10.json};
\item the \textsf{Coxeter3} comparison \texttt{cross\_check\_coxeter3.py}, with results \texttt{results/coxeter3\_check\_n*.json};
\item the SageMath comparison of check~9, \texttt{cross\_check\_sage.py}, with results \texttt{results/sage\_check.json} (every pair of $S_n\times S_n$ for $n\le5$, every extremal pair for $n=6$);
\item the full-interval engines \texttt{monimm/kl\_fast.py} and \texttt{monimm/kl\_big.py} ($n\le9$);
\item the order computations \texttt{monimm/ordering.py} and \texttt{monimm/order\_structure.py}, with results \texttt{results/immanant\_ordering\_n1-12.json} and \texttt{results/order\_structure\_n1-10.json};
\item the isolation study \texttt{monimm/tnn\_cone.py} and \texttt{check\_tnn\_isolation.py}, with results \texttt{results/tnn\_isolation\_n4.json} and \texttt{results/tnn\_isolation\_n5.json}, and the exact certificates for Theorem~\ref{thm:smalln}, \texttt{check\_isolation\_certificates.py} with \texttt{results/tnn\_isolation\_certificates\_n4.json} and \texttt{results/tnn\_isolation\_certificates\_n5.json} (exponent vectors, integer separating functionals, and exact non-vertex certificates; \texttt{--verify} re-checks a file in integer arithmetic);
\item the clean-step computations \texttt{notes/clean\_steps.py}, with results \texttt{results/clean\_steps\_n*.json}, the checks of \cite{McN} in \texttt{notes/hook\_proof/}, \texttt{notes/skandera\_chain.py}, \texttt{notes/cprod.py}, \texttt{notes/r2\_entry\_check.py}, \texttt{notes/pattern\_lifting.py} and, for the rooted refinement \cite[Conj.~10.2]{McN}, \texttt{notes/rooted\_vertex.py}, \texttt{notes/rooted\_graded.py} and \texttt{notes/rooted\_graded\_an.py} (logs \texttt{results/rooted\_*.log}), and the hook expansions and the counts for the shapes $(3,2^b,1^c)$ of Section~\ref{sec:evidence} in \texttt{check\_further\_computations.py}, with results \texttt{results/further\_computations\_n8.json};
\item a test suite covering every engine against brute force at small $n$.
\end{itemize}
The full tables for $n=10$ are archived separately as \cite{McNdata} (version 2.1.0, \href{https://doi.org/10.5281/zenodo.23532289}{doi:10.5281/zenodo.23532289}): the $42\times10!$ integers $c_{\lambda,w}$ (\texttt{ext\_n10.npz}), the $42\times10!$ polynomials $c_{\lambda,w}(q)$ (\texttt{kl\_graded\_n10.npz.zst}), the traces $\phi^\lambda_q(T_v)$ modulo $2^{31}-19$ at $q=2,3$ for every $v$ (\texttt{kl\_graded\_n10\_traces\_modp.npz}), and the irreducible characters $\chi^\mu_q(T_v)$ for every $\mu\vdash10$ and every $v$, with the same modulus and the same $q$ (\texttt{seminormal\_traces\_n10\_modp.npz}, new in version 2.1.0). Their SHA-256 checksums are in \texttt{results/SHA256SUMS}.

\emph{Reproducing the results.} Each result file has a driver and, for the large runs, a runbook that checks a smaller case first and must reproduce a stored result before the main run starts. The integer-safety bounds of Section~\ref{sec:checks} are recomputed and recorded by the drivers (\nolinkurl{accumulation_bound} and \nolinkurl{check_no_int64_overflow} in \nolinkurl{results/kl_graded_n10.json}). The command \texttt{python -m monimm.verify} runs the checks of Section~\ref{sec:checks} and prints one line per check, numbered as there: it reruns them for $n\le8$ (and $n=9$ with \texttt{--tier} \texttt{local}), checks the stored $n=10$ results against each other, against the numbers in this paper and against the checksums in \nolinkurl{results/SHA256SUMS}, and records the package versions; \nolinkurl{requirements-lock.txt} pins them.
\begin{center}
\footnotesize
\begin{tabular}{>{\raggedright\arraybackslash}p{4.3cm}>{\raggedright\arraybackslash}p{6.9cm}>{\raggedright\arraybackslash}p{2.6cm}}
\toprule
result & command (from the repository root) & resources\\
\midrule
\nolinkurl{results/ext_n10.json} & \texttt{bash} \nolinkurl{cloud/run_n10.sh} (runs \nolinkurl{run_kl_extremal.py} \texttt{--n} \texttt{10}; acceptance test $n=9$ against \nolinkurl{results/big_n9.npz}) & 279\,s, 192 cores, 15.5\,GB\\
\nolinkurl{results/kl_graded_n10.json} & \texttt{bash} \nolinkurl{cloud/run_kl_graded_n10.sh} (runs \nolinkurl{run_kl_graded.py} \texttt{--n} \texttt{10}; acceptance test $n=8$) & 1\,h\,31\,min, 192 cores, 125\,GB\\
\nolinkurl{results/kl_graded_n10_codominant.json} & \texttt{bash} \nolinkurl{cloud/run_kl_graded_n10_codominant.sh} & as above\\
\nolinkurl{results/coxeter3_check_n10.json} & \texttt{bash} \nolinkurl{cloud/run_coxeter3_n10.sh} (runs \nolinkurl{cross_check_coxeter3.py} \texttt{--n} \texttt{10} \texttt{--all}) & 2.4\,h, 190 cores\\
\nolinkurl{results/inversion_exact_n10.json} & \nolinkurl{check_inversion_exact.py} \texttt{--n} \texttt{10} \texttt{--coeffs} \nolinkurl{results/ext_n10.npz} & about 10\,GB\\
\nolinkurl{results/kl_graded_n9.json} & \nolinkurl{run_kl_graded.py} \texttt{--n} \texttt{9} & 25\,min, 4 cores\\
\nolinkurl{results/tnn_isolation_certificates_n5.json} & \nolinkurl{check_isolation_certificates.py} \texttt{--n} \texttt{5} & minutes, 1 core\\
test suite & \texttt{python} \texttt{-m} \texttt{pytest} \texttt{-q} & minutes\\
all checks, $n\le8$ & \texttt{python} \texttt{-m} \nolinkurl{monimm.verify} & 3\,min, 4 cores\\
\bottomrule
\end{tabular}
\end{center}
\par}

\section*{Declaration of generative AI and AI-assisted technologies in the manuscript preparation process}
During the preparation of this work the author used Claude (Anthropic), through Claude Code, to write and test the software, to run and cross-check the computations, to search and summarise the literature, to check arguments against computed examples, and to draft and edit the text. After using this tool, the author reviewed and edited the content as needed and takes full responsibility for the content of the published article.


\begin{thebibliography}{99}
\bibitem{AN} A.~Abreu and A.~Nigro, An update on Haiman's conjectures, Forum Math.\ Sigma 12 (2024), e86; arXiv:2206.00073v2.
\bibitem{CHSS} S.~Clearman, M.~Hyatt, B.~Shelton and M.~Skandera, Evaluations of Hecke algebra traces at Kazhdan--Lusztig basis elements, Electron.\ J.\ Combin.\ 23(2) (2016), P2.7.
\bibitem{CSS} S.~Clearman, B.~Shelton and M.~Skandera, Path tableaux and combinatorial interpretations of immanants for class functions on $S_n$, Discrete Math.\ Theor.\ Comput.\ Sci.\ Proc.\ AO (2011), 233--244.
\bibitem{Cox3} F.~du~Cloux, \textsf{Coxeter3}, software for Coxeter groups; maintained version at \url{https://github.com/tscrim/coxeter}.
\bibitem{DWG} X.~Dong, T.~Wu and X.~Gao, On the nonnegativity of monomial immanants for hook partitions, arXiv:2609.36665v1.
\bibitem{FG} C.~K.~Fan and R.~M.~Green, Monomials and Temperley--Lieb algebras, J.\ Algebra 190 (1997), 498--517.
\bibitem{FZ} S.~Fomin and A.~Zelevinsky, Double Bruhat cells and total positivity, J.\ Amer.\ Math.\ Soc.\ 12 (1999), 335--380.
\bibitem{GP} M.~Geck and G.~Pfeiffer, \emph{Characters of Finite Coxeter Groups and Iwahori--Hecke Algebras}, London Math.\ Soc.\ Monographs 21, Oxford University Press, 2000.
\bibitem{Hai} M.~Haiman, Hecke algebra characters and immanant conjectures, J.\ Amer.\ Math.\ Soc.\ 6 (1993), 569--595.
\bibitem{Hik} T.~Hikita, A proof of the Stanley--Stembridge conjecture, J.\ Amer.\ Math.\ Soc., published electronically 28 August 2026, \href{https://doi.org/10.1090/jams/1090}{doi:10.1090/jams/1090}; arXiv:2410.12758.
\bibitem{Hoe74} P.~N.~Hoefsmit, Representations of Hecke algebras of finite groups with BN-pairs of classical type, doctoral dissertation, University of British Columbia, 1974.
\bibitem{KL} D.~Kazhdan and G.~Lusztig, Representations of Coxeter groups and Hecke algebras, Invent.\ Math.\ 53 (1979), 165--184.
\bibitem{McN} K.~McNulty, Hook monomial immanants are Kazhdan--Lusztig positive, preprint, 2026, arXiv:2610.39021.
\bibitem{McNcode} K.~McNulty, \texttt{keithmcnulty/haiman-immanants-papers-n10-hooks}: code and data for the papers, version v1.4.0, software, Zenodo, 2026, \href{https://doi.org/10.5281/zenodo.23730750}{doi:10.5281/zenodo.23730750}.
\bibitem{McNdata} K.~McNulty, Kazhdan--Lusztig coefficients $c_{\lambda,w}$ and Haiman's polynomials $c_{\lambda,w}(q)$ of the monomial immanants for $S_{10}$, version 2.1.0, dataset, Zenodo, 2026, \href{https://doi.org/10.5281/zenodo.23532289}{doi:10.5281/zenodo.23532289}.
\bibitem{Pat91} T.~H.~Pate, Partitions, irreducible characters, and inequalities for generalized matrix functions, Trans.\ Amer.\ Math.\ Soc.\ 325 (1991), 875--894.
\bibitem{Ram} A.~Ram, A Frobenius formula for the characters of the Hecke algebras, Invent.\ Math.\ 106 (1991), 461--488.
\bibitem{Ram97} A.~Ram, Seminormal representations of Weyl groups and Iwahori--Hecke algebras, Proc.\ London Math.\ Soc.\ (3) 75 (1997), 99--133.
\bibitem{RS06} B.~Rhoades and M.~Skandera, Kazhdan--Lusztig immanants and products of matrix minors, J.\ Algebra 304 (2006), 793--811.
\bibitem{SW16} J.~Shareshian and M.~L.~Wachs, Chromatic quasisymmetric functions, Adv.\ Math.\ 295 (2016), 497--551.
\bibitem{Ska08} M.~Skandera, On the dual canonical and Kazhdan--Lusztig bases and $3412$-, $4231$-avoiding permutations, J.\ Pure Appl.\ Algebra 212 (2008), 1086--1104.
\bibitem{Ska25} M.~Skandera, Hook immanantal inequalities for totally nonnegative matrices, Electron.\ J.\ Combin.\ 33(4) (2026), P4.23, \href{https://doi.org/10.37236/14684}{doi:10.37236/14684}; arXiv:2510.00327v2.
\bibitem{Sot} F.~Sottile, Toric ideals, real toric varieties, and the moment map, in: Topics in Algebraic Geometry and Geometric Modeling, Contemp.\ Math.\ 334 (2003), 225--240.
\bibitem{Ste91} J.~Stembridge, Immanants of totally positive matrices are nonnegative, Bull.\ London Math.\ Soc.\ 23 (1991), 422--428.
\bibitem{Ste92} J.~Stembridge, Some conjectures for immanants, Canad.\ J.\ Math.\ 44 (1992), 1079--1099.
\bibitem{War} G.~S.~Warrington, Kazhdan--Lusztig polynomials of symmetric groups (data), \url{https://gswarrin.w3.uvm.edu/research/klc/klc.html}.
\end{thebibliography}
\end{document}